\documentclass[11pt,reqno]{amsart}
\usepackage[margin=1.15in]{geometry}
\usepackage{amsmath,amssymb,amsthm}

\newcommand{\Hyp}{\mathbb H^3_1}
\newcommand{\sltwo}{\mathfrak{sl}(2,\mathbb R)}
\newcommand{\SLR}{SL(2,\mathbb R)}
\newcommand{\Ad}{\mathrm{Ad}}
\newcommand{\Sph}{\mathbb S^2_1}
\newcommand{\Id}{\mathrm{Id}}

\numberwithin{equation}{section}

\theoremstyle{plain}
\newtheorem{theorem}{Theorem}[section]
\newtheorem{proposition}[theorem]{Proposition}
\newtheorem{lemma}[theorem]{Lemma}
\newtheorem{corollary}[theorem]{Corollary}

\theoremstyle{definition}
\newtheorem{problem}[theorem]{Problem}
\newtheorem{example}[theorem]{Example}

\theoremstyle{remark}
\newtheorem{remark}[theorem]{Remark}

\begin{document}

\title[Timelike CMC and CGC surfaces in anti-de Sitter $3$-space]{Timelike surfaces of constant mean and constant Gaussian curvature in anti-de Sitter $3$-space via loop groups}

\author{Jorge BRAVO-GADEA}
\thanks{Some of the results presented here originate in the author's doctoral work, supported by the Independent Research Fund Denmark (DFF), grant number 9040-00196B}

\address{Departamento de F\'isica y Matem\'aticas, Universidad de Alcal\'a\\
Alcal\'a de Henares, Madrid, Spain}
\email{jbgadea@gmail.com}

\keywords{anti-de Sitter space, timelike surfaces, constant mean curvature, constant Gaussian curvature, harmonic maps, loop groups, geometric Cauchy problem}
\subjclass[2020]{53C43; 53A10; 58E20}

\begin{abstract}
The loop group description of surfaces whose Gauss map is Lorentz harmonic is known for timelike surfaces of constant mean curvature in the flat Lorentzian $3$-space, and for surfaces of constant Gaussian curvature in the $3$-sphere. We carry out the corresponding construction for timelike surfaces in anti-de Sitter $3$-space $\Hyp\cong\SLR$. These are the surfaces of constant mean curvature, whose Gauss map is harmonic for the conformal structure of the first fundamental form, and those of constant Gaussian curvature $K>-1$, $K\ne0$, whose Gauss map is then an immersion and is harmonic for the second fundamental form. For both classes we prove the harmonicity characterization and recover the surfaces from extended frames by Sym--Bobenko-type formulas. We also solve the geometric Cauchy problem by the generalized DPW method, with potentials written explicitly in terms of the data and no normalization of the Gauss map. We then show that the two constructions differ only by a normalization. After the substitution $\lambda=\zeta^2$ and a constant gauge, a constant mean curvature extended frame $\hat F$ is an extended frame of its Gauss map, and the maps
\[
\hat F|_{\lambda=e^{4\theta}}\,\exp(\theta e_3)\,\hat F|_{\lambda=1}^{-1},\qquad e_3\,\hat F|_{\lambda=e^{4\theta}}\,e_3\,\exp(\theta e_3)\,\hat F|_{\lambda=1}^{-1},
\]
with $e_3=\operatorname{diag}(-1,1)$, are timelike surfaces of constant Gaussian curvature $1/\sinh^2\theta$ and $-1/\cosh^2\theta$ wherever the constant mean curvature surface is not flat. The first is the classical parallel surface, and the second is its polar surface. Explicit families, with extended frames and surfaces in closed form, illustrate the constructions.
\end{abstract}

\maketitle

\section{Introduction}
\label{sec:intro}

A recurring theme in the theory of surfaces in three-dimensional homogeneous spaces is the correspondence between a curvature condition on an immersion and an analytic condition on its Gauss map. The classical statement in Euclidean space is the theorem of Ruh and Vilms~\cite{RV70}: an immersion has constant mean curvature if and only if its Gauss map is harmonic. Surfaces of constant negative Gaussian curvature are governed instead by the sine-Gordon equation~\cite{BP96}, and in hyperbolic space the surfaces of mean curvature one admit Bryant's holomorphic representation~\cite{Bryant87}. In each case the geometric condition is an integrable system in disguise, and the systematic way to exploit this is through loop groups: harmonic maps into a symmetric space are described by loop group factorizations~\cite{PS86,Guest97,HSW13}, and the method of Dorfmeister, Pedit and Wu~\cite{DPW98}, together with its generalizations~\cite{BD09}, turns the construction of the corresponding surfaces into the choice of a pair of potentials followed by a Birkhoff factorization. See~\cite{FKR06} for a detailed exposition in the constant mean curvature case.

A second recurring theme, and the one this paper is ultimately about, is the existence of correspondences \emph{between} surface classes: the Lawson correspondence between minimal surfaces in $\mathbb R^3$ and constant mean curvature surfaces in $S^3$~\cite{Lawson70}, and the B\"acklund and Darboux transformations of integrable surface theory~\cite{Tenenblat98,Krichever80}, which produce new surfaces from old ones and which, in the integrable picture, are typically implemented by simple operations on the loop group data. The result proved in Section~\ref{sec:bridge} below is a correspondence of that kind, and its content is that two classical geometric operations --- passing to a parallel surface, and to the polar surface of that --- become, at the level of extended frames, the insertion of a single constant element between two evaluations of the frame.

Two classes of surfaces coexist for surfaces in a $3$-dimensional space form of nonzero curvature, and they are governed by \emph{different} conformal structures on the parameter domain. If the Gauss map of a surface is harmonic with respect to the conformal structure of the \emph{first} fundamental form, the surface has constant mean curvature; if it is harmonic with respect to the \emph{second} fundamental form, the surface has constant Gaussian curvature. Both statements are classical in Euclidean space, and both have loop group counterparts. In the Lorentzian setting the picture is complicated by causality: a surface may be spacelike or timelike, the induced conformal structure on the domain may be Riemannian or Lorentzian, and the loop group formalism must be set up over a real form adapted to the case at hand.

The loop group treatment of these two surface classes has been carried out in several ambient spaces, and the present paper supplies a missing entry in that record. In the \emph{flat} Lorentzian ambient space $\mathbb R^{2,1}$, the characterization of timelike constant mean curvature surfaces by harmonicity of the Gauss map with respect to the first fundamental form goes back to T.K.~Milnor~\cite{TKM83}; the full loop group description --- twisted real form, real spectral parameter, Birkhoff decomposition, normalized potentials and a Sym formula --- is due to Dorfmeister, Inoguchi and Toda~\cite{DIT02}, with the Sym formula and the associated family of parallel surfaces already in Inoguchi~\cite{Ino98} and the minimal case in Inoguchi and Toda~\cite{IT04}. The corresponding statement for constant Gaussian curvature, namely harmonicity of the Gauss map with respect to the \emph{second} fundamental form, is stated in both~\cite[Prop.~1.9]{DIT02} and~\cite[Prop.~4.5]{IT04}. Timelike surfaces in $\mathbb R^{2,1}$ with prescribed mean curvature and Gauss map were studied, without loop groups, by Magid~\cite{Mag91}. On the same flat side, Toda~\cite{Toda05} gave the generalized d'Alembert representation for pseudospherical surfaces in $\mathbb R^3$, and Brander and Svensson solved the geometric Cauchy problem --- for the immersion itself, with a prescribed curve and tangent plane along it --- for pseudospherical surfaces in $\mathbb R^3$ and for timelike constant mean curvature surfaces in $\mathbb R^{2,1}$~\cite{BS13}, later analysing the singularities of the latter~\cite{BS14}.

In a \emph{curved} ambient space the program was carried out by Brander, Inoguchi and Kobayashi~\cite{BIK14} for surfaces of constant Gaussian curvature $K<1$ in the $3$-sphere, and a loop group method for constant Gaussian curvature surfaces in the Riemannian hyperbolic $3$-space has since been established by Inoguchi and Kobayashi~\cite{IK24}, where it is used to classify the weakly complete surfaces with $-1<K<0$. The paper~\cite{BIK14} contains, for $S^3$, all the structural ingredients used in Sections~\ref{sec:cgc} and~\ref{sec:bridge} below: the normal Gauss map $\nu=f^{-1}N$ obtained by left translation of the unit normal $N$ into the Lie algebra, the theorem that $\nu$ is Lorentz harmonic for the conformal structure of the second fundamental form exactly when $K$ is constant~\cite[Thm.~3.1]{BIK14}, the notion of admissible frame, in which the $\lambda$-independent part of the Maurer--Cartan form takes values in the isotropy subalgebra of the symmetric space and the coefficients of $\lambda^{\pm1}$ take values in its complement, the generalized d'Alembert representation, the recovery of the immersion from the extended frame as the \emph{algebraic} evaluation $f=\hat F|_{\lambda=\mu}\hat F|_{\lambda=1}^{-1}$ --- available because the ambient space is itself the group --- and a Lawson-type correspondence obtained by evaluating one and the same extended frame at different values of the loop parameter. For surfaces in $\Hyp$ without loop groups, timelike surfaces of constant mean curvature $\pm1$ were treated by Hong~\cite{Hong94} and, in detail, by Lee~\cite{Lee06}, whose adapted frame and Gauss--Codazzi system in $\Hyp$ are those recalled in \S\ref{sec:cmc-frame}, and whose~\cite[Thm.~4(2)]{Lee06} is the formula of Proposition~\ref{prop:sym-bobenko} for general $H$, stated there without a spectral parameter; the flat Lorentzian tori were classified by Le\'on-Guzm\'an, Mira and Pastor~\cite{LGMP11}. The DPW method for \emph{spacelike} surfaces in Lorentzian space forms of nonzero curvature is available~\cite{Ogata17}, and the spacelike surfaces of constant Gaussian curvature in $\Hyp$, whose Gauss map takes values in the hyperbolic plane rather than in $\Sph$, are treated by loop group methods in~\cite{BG26}; and loop groups have been applied in a curved Lorentzian ambient space of a different kind by Kiyohara and Kobayashi~\cite{KK22}, who characterize timelike minimal surfaces in the Lorentzian Heisenberg group by the harmonicity of a left-translated Gauss map with values in $\Sph$ and give a generalized Weierstrass representation for them, and by Brander and Kobayashi~\cite{BK25}, for surfaces of zero mean curvature in the same ambient space. Lorentzian harmonic maps into the de Sitter $2$-space --- the target of the Gauss map throughout this paper --- and the timelike surfaces associated with them have also been studied by other means: Akamine and Kiyohara~\cite{AK26} construct such maps, and timelike constant mean curvature surfaces in $\mathbb R^{2,1}$ and timelike minimal surfaces in the Lorentzian Heisenberg group, from framed null curves rather than from potentials.

What is missing from this record is the case that is simultaneously curved and Lorentzian with a Lorentzian \emph{induced} metric: timelike surfaces in $\Hyp$. The omission is not accidental. The construction of~\cite{BIK14} takes place over the compact symmetric space $S^2=SU(2)/U(1)$, while the target here is the de Sitter $2$-sphere, whose stabilizer $SO(1,1)$ is noncompact and whose metric is indefinite; the general frameworks for constant curvature immersions into pseudo-Riemannian space forms by curved flats~\cite[Thm.~3.2]{Brander07} require, by contrast, that the \emph{induced} metric be positive definite; what is used here is the generalized DPW method of~\cite{BD09}, not that application of it. Inoguchi and Toda, having observed that timelike minimal surfaces in $\Hyp$ are harmonic maps into $\SLR$, write in~\cite[Rem.~10.3]{IT04} that they hope a general scheme would allow substantial progress in the study of such surfaces; the present paper is a contribution in that direction.

One treatment in the literature does reach the surfaces of Section~\ref{sec:cgc}. Xia~\cite{Xia07} studies surfaces of constant Gaussian curvature in pseudo-Riemannian three-dimensional space forms $\bar M^3_s(\bar K)$ by loop group methods, and the range of indices treated there includes timelike surfaces in $\Hyp$. Theorem~2.1 of~\cite{Xia07} puts such a surface into coordinates adapted to its asymptotic directions, in which the Tchebyshev angle satisfies a sinh-Gordon equation when $K>\bar K$ and the principal curvatures are real, a cosh-Gordon equation when $K>\bar K$ and they are imaginary, and a sine-Laplace equation when $K<\bar K$; these systems are then written as the flatness, for every value of a real or complex loop parameter, of a one-form with values in the loop algebra of $SO_J(4)$, $J=\operatorname{diag}(\bar K,\epsilon_1,\epsilon_2,\epsilon_3)$, twisted by conjugation with $\operatorname{diag}(-1,1,1,-1)$. For the ambient space of the present paper that algebra is $\mathfrak{so}(2,2)\cong\sltwo\oplus\sltwo$, and the description splits accordingly into two chiral halves, of which one is the formalism used in Section~\ref{sec:cgc} below: the same spectral parameter and the same $\lambda$-supports, with the left-translated normal $\nu=f^{-1}N$ of \S\ref{sec:cgc-cauchy} in one factor and the right-translated $Nf^{-1}$ in the other. A Birkhoff decomposition is proved for the resulting loop groups~\cite[Thms.~4.2--4.4]{Xia07} and normalized potentials are obtained in each case in terms of the initial values of the Tchebyshev angle along the two characteristics; the timelike surfaces in $\Hyp$ with $K>\bar K$ are treated in~\cite[Cors.~4.2 and~4.3]{Xia07}. A generalized Weierstrass representation of those surfaces is therefore already available, and none is claimed here. It does not cover every surface of Section~\ref{sec:cgc}: in the case of real principal curvatures~\cite[Thm.~2.1]{Xia07} produces coordinates in which $I$ and $II$ are simultaneously diagonal, which presupposes a diagonalizable shape operator, whereas a timelike surface whose shape operator is a nontrivial Jordan block, with a real double eigenvalue $k\ne0$, has $K+1=k^2>0$ and $\det II<0$ and falls under Section~\ref{sec:cgc}. In asymptotic coordinates such a surface has exactly one of $\langle f_x,f_x\rangle$, $\langle f_y,f_y\rangle$ equal to zero, and the surfaces of Example~\ref{ex:akamine-kiyohara} are of this kind. The loop group formalism of Section~\ref{sec:cgc} is thus, up to that splitting, the one of~\cite{Xia07}, and no priority is claimed for it. What is absent from that treatment is the ingredient the present paper is organized around: the Gauss map does not appear there, so neither the identification of one chiral half of the frame as the normal Gauss map nor the characterization of the curvature condition by its harmonicity is available; the immersion is read off the frame at a single value of the loop parameter rather than from an evaluation at two, so there is no formula of Sym--Bobenko type; the recovery of the surface carries no free parameter, whereas Proposition~\ref{prop:cgc-converse} produces a one-parameter family; and constant mean curvature surfaces, and with them any correspondence of the kind proved in Section~\ref{sec:bridge}, are not treated.

Section~\ref{sec:cmc} concerns the same surfaces as~\cite{Hong94,Lee06}, and the relation is as follows. The construction of Section~\ref{sec:cmc} produces mean curvature $H=(\mu+1)/(\mu-1)$, where $\mu$ is the spectral parameter; this never takes the values $\pm1$, which are the degenerate limits $\mu\to\infty$ and $\mu\to0$, and it satisfies $|H|>1$ for $\mu>0$ and $|H|<1$ for $\mu<0$. The values $H=\pm1$ studied in~\cite{Hong94,Lee06} are thus exactly the ones our loop group description excludes, so the two treatments are complementary; and, as we shall see in Section~\ref{sec:bridge}, it is precisely the range $|H|>1$ that admits a parallel surface of constant Gaussian curvature.

\subsection*{What this paper does}

Our aim is not only to record the two constructions, but to exhibit the relation between them. The paper is organized around the following three steps.

\emph{Step 1 (Section~\ref{sec:cmc}).} We set up the theory of timelike constant mean curvature (CMC) surfaces in $\Hyp$. Working in null coordinates for the first fundamental form and in a frame adapted to the two null tangent directions, we compute the frame coefficients in terms of the geometric data $(H,Q,R,\omega)$ (Lemma~\ref{lem:frame-coeffs}), derive the Gauss--Codazzi system (Proposition~\ref{prop:GC}), and prove the harmonicity characterization: the Gauss map is harmonic for the first fundamental form if and only if $H$ is constant, if and only if a one-parameter family of $\sltwo$-valued one-forms $\alpha^\lambda$ satisfies the Maurer--Cartan equation for every value of the spectral parameter (Theorem~\ref{thm:cmc-harmonic}). We then prove the Sym--Bobenko formula for this situation (Proposition~\ref{prop:sym-bobenko}): from a regular admissible extended frame $\hat F$ and a parameter $\mu\ne0,1$ one obtains a timelike CMC immersion
\[
f^\mu=\hat F|_{\lambda=\mu}\,\hat F|_{\lambda=1}^{-1},\qquad H=\frac{\mu+1}{\mu-1}.
\]
Two examples (\S\ref{sec:cmc-example}) run the whole machinery in closed form, one of them with nonconstant curvature.

\emph{Step 2 (Section~\ref{sec:cgc}).} We set up the theory of timelike constant Gaussian curvature (CGC) surfaces. Here the relevant conformal structure is that of the second fundamental form, the adapted coordinates are the asymptotic ones, and the characterization (Theorem~\ref{thm:cgc-harmonic}) states that, for a timelike immersion with positive extrinsic curvature and immersive Gauss map, the Gauss map is harmonic for the second fundamental form exactly when the Gaussian curvature is constant. The converse (Proposition~\ref{prop:cgc-converse}) recovers, from a harmonic immersion $\nu$ into the de Sitter $2$-sphere and for each $\rho\ne0,\pm1$, a timelike CGC surface with $K+1=\rho^2$ by integrating an explicit one-form; once the extended frame of $\nu$ is available, the same surfaces are read off it algebraically, by a Sym--Bobenko-type formula whose two branches $K>0$ and $-1<K<0$ are separated by the sign of the spectral parameter (Proposition~\ref{prop:cgc-sym}). We solve the Cauchy problem for harmonic maps into $\Sph$ and the geometric Cauchy problem for timelike CGC surfaces (Theorem~\ref{thm:cauchy} and Corollary~\ref{cor:geometric-cauchy}); Example~\ref{ex:cauchy} carries both out on data for which the potential, the solution and the resulting surfaces are all written down explicitly.

\emph{Step 3 (Section~\ref{sec:bridge}).} The two constructions of Steps~1 and~2 are, on the face of it, unrelated: they use different conformal structures, different adapted frames and different potentials, and each recovers only surfaces of its own class from its own frame. We first prove the parallel surface correspondence (Theorem~\ref{thm:parallel}), which relates the two classes at the level of individual surfaces, and then show that the two constructions are related by an explicit formula. The corresponding statement in the flat ambient space $\mathbb R^{2,1}$ is due to Inoguchi~\cite[\S4]{Ino98}, where the parallel surface at distance $1/(2H)$ from a timelike CMC $H$ surface is shown to have constant Gaussian curvature $4H^2$, and the parallel surface at distance $1/H$ to have mean curvature $-H$; what follows is the counterpart in $\Hyp$, where the parallel displacement is implemented inside the group. Writing $N^\mu$ for the unit normal of $f^\mu$, the parallel surface $f^\mu\cosh\theta+N^\mu\sinh\theta$ is, as an identity of matrices,
\[
f^\mu\cosh\theta+N^\mu\sinh\theta=\hat F|_{\lambda=\mu}\,\exp(\theta e_3)\,\hat F|_{\lambda=1}^{-1},
\]
and when the spectral parameter is tied to the parallel distance by $\mu=e^{4\theta}$, the resulting immersion has constant Gaussian curvature $K=1/\sinh^2\theta$ at every point where the CMC surface $f^\mu$ is not flat (Theorem~\ref{thm:bridge}). Thus a single CMC extended frame, evaluated at two values of $\lambda$ and multiplied by a fixed one-parameter subgroup element, produces CGC surfaces, with no further integration. Parallel displacement reaches only $K>0$: a CGC surface with $K\le0$ and nonconstant mean curvature has no parallel surface of constant mean curvature (Proposition~\ref{prop:no-parallel-cmc}). The formula works for a structural reason. After the substitution $\lambda=\zeta^2$ and a constant gauge, a CMC extended frame is an extended frame of its Gauss map in the sense of Step~2 (Lemma~\ref{lem:cmc-twisted}). Evaluating it at $\zeta=-e^{2\theta}$ instead of $e^{2\theta}$ gives
\[
e_3\,\hat F|_{\lambda=e^{4\theta}}\,e_3\,\exp(\theta e_3)\,\hat F|_{\lambda=1}^{-1},
\]
of constant Gaussian curvature $-1/\cosh^2\theta$, which is the polar surface of the parallel one. So both branches are reached from a single CMC frame (Theorem~\ref{thm:bridge}). The same observation solves the geometric Cauchy problem for CMC surfaces (Corollary~\ref{cor:cauchy-cmc}).

The same formula with $\exp(2\theta e_3)$ in place of $\exp(\theta e_3)$ returns to the constant mean curvature class, with the mean curvature reversed (Corollary~\ref{cor:bridge-cmc}); in fact the whole one-parameter family obtained by letting the inserted group element run has a closed-form description (Lemma~\ref{lem:interpolation}), and three of its members are distinguished --- two of constant mean curvature at the ends and one of constant Gaussian curvature at the midpoint, with three genuinely different nondegeneracy conditions (Remark~\ref{rem:three-values}). The same three distinguished members are present in the flat case~\cite[Props.~4.1 and~4.3]{Ino98}, and the pattern is the one the logarithmic term of the Sym formula produces there~\cite[\S5]{DIT02}; what changes in $\Hyp$ is that no differentiation in $\lambda$ is involved, the whole family being obtained by inserting a one-parameter subgroup between two evaluations of the same extended frame.

The nondegeneracy hypothesis of the middle one turns out to be more than a technicality. By Proposition~\ref{prop:degenerate-locus}, a timelike immersion is flat at a point exactly when its Gauss map fails to be an immersion there, so the hypothesis $K\ne0$ is precisely the standing assumption under which the CGC theory of Section~\ref{sec:cgc} is set up in the first place --- an assumption which, as an example in that section shows, is a genuine restriction and not a formality. The bridge is therefore available exactly where its target theory is. Since an extended frame with constant coefficients always violates it (Proposition~\ref{prop:constant-potential-flat}), we exhibit in Example~\ref{ex:sinh-gordon} and Remark~\ref{rem:bridge-example} an explicit family, obtained from a travelling-wave solution of the sinh-Gordon equation in closed form, for which the hypothesis holds at every point and the Gaussian curvature of the CMC surface is not constant.

Much of Sections~\ref{sec:cmc} and~\ref{sec:cgc} transposes to timelike surfaces in $\Hyp$ the constructions of~\cite{DIT02,Lee06} and of~\cite{BIK14,Xia07}; the corresponding results carry the attribution in their headers, and the arguments are given in full because the target $\Sph$ has noncompact stabilizer and the induced metric is indefinite. The results for which we know no precedent are: the characterization of constant Gaussian curvature by harmonicity of the normal Gauss map for timelike surfaces in $\Hyp$ (Theorem~\ref{thm:cgc-harmonic}), with its converse in both branches (Proposition~\ref{prop:cgc-converse}); the characterization of the degenerate locus (Proposition~\ref{prop:degenerate-locus}); the absence of parallel surfaces of constant mean curvature for constant Gaussian curvature surfaces with $K\le0$ and nonconstant mean curvature (Proposition~\ref{prop:no-parallel-cmc}); the solution of the Cauchy problem for harmonic maps into $\Sph$ from data on a non-characteristic curve, and of the geometric Cauchy problems for timelike constant Gaussian and constant mean curvature surfaces (Theorem~\ref{thm:cauchy} and Corollaries~\ref{cor:geometric-cauchy} and~\ref{cor:cauchy-cmc}); the formulas of Theorem~\ref{thm:bridge}, which give both curvature branches from a single constant mean curvature frame, together with Lemma~\ref{lem:interpolation} and Corollary~\ref{cor:bridge-cmc}; and the explicit families of Examples~\ref{ex:sinh-gordon}, \ref{ex:cauchy} and~\ref{ex:akamine-kiyohara}, in which the surfaces are given in closed form.

The relation to the author's preprint~\cite{BG26}, which treats \emph{spacelike} surfaces in the same ambient space, should be stated explicitly. The strategy there is the one followed in Section~\ref{sec:cgc} here: constant Gaussian curvature is characterized by harmonicity of the Gauss map with respect to the second fundamental form, the surface is recovered from the harmonic map by integrating an $\sltwo$-valued one-form, and the geometric Cauchy problem is solved through that correspondence. What distinguishes the timelike case is not the strategy but the following three points. First, the Gauss map takes values in the de Sitter $2$-sphere rather than in the hyperbolic plane, and the stabilizer is noncompact. Second, and as a consequence, the tangent planes of the target are Lorentzian rather than positive definite, so that a null derivative of $\nu$ may be spacelike, timelike or null. In~\cite[\S2.3]{BG26} the frame is normalized by the length of $\nu_x$ after a reparametrization of the null coordinates; here the loop group description of \S\ref{sec:cgc-cauchy} is set up without any normalization, and Example~\ref{ex:akamine-kiyohara} shows a harmonic map with a null derivative to which it applies. Third, the first fundamental form of a timelike surface is itself a Lorentzian metric and therefore has null coordinates of its own; the constant mean curvature theory of Section~\ref{sec:cmc} --- the harmonicity characterization for the first fundamental form, the Sym--Bobenko formula of Proposition~\ref{prop:sym-bobenko}, and with them the whole of Section~\ref{sec:bridge} --- rests on that fact and has no counterpart in~\cite{BG26}.

The singularities of the immersions constructed here, and the behaviour of the constructions at the points where the various nondegeneracy conditions fail, are deliberately left outside the scope of this paper. The singularities are governed by the failure of the Birkhoff decomposition away from the big cell, which in the noncompact setting used here is not global; this is the phenomenon isolated in~\cite[\S1]{BS13} and analysed, for the flat ambient space, in~\cite{BS14}.

\section{Preliminaries}
\label{sec:prelim}

This section fixes notation and recalls, without proof, the standard material used in both constructions: the Lie group model of the ambient space, the second fundamental form and the Gauss map, harmonic maps into the de Sitter $2$-sphere, and the loop group formalism. Nothing here is new.

\subsection{Anti-de Sitter $3$-space as a Lie group}
\label{sec:prelim-liegroup}

Let $\mathfrak{gl}(2,\mathbb R)$ be the space of real $2\times2$ matrices, with basis
\[
e_0=\begin{pmatrix}1&0\\0&1\end{pmatrix},\quad e_1=\begin{pmatrix}0&-1\\1&0\end{pmatrix},\quad e_2=\begin{pmatrix}0&1\\1&0\end{pmatrix},\quad e_3=\begin{pmatrix}-1&0\\0&1\end{pmatrix}.
\]
Equip $\mathfrak{gl}(2,\mathbb R)$ with the bilinear form $\langle X,Y\rangle=-\tfrac12\operatorname{tr}(X\bar Y)$, where $\bar Y$ denotes the adjoint (cofactor) transpose of $Y$, so that $Y\bar Y=(\det Y)\Id$, where $\Id$ denotes the identity matrix. This makes $\mathfrak{gl}(2,\mathbb R)$ isometric to $\mathbb R^{2,2}$, with $\langle X,X\rangle=-\det X$, and therefore
\[
\Hyp=\{X\in\mathfrak{gl}(2,\mathbb R)\mid\langle X,X\rangle=-1\}=\{X\mid\det X=1\}=\SLR
\]
realizes anti-de Sitter $3$-space as a quadric in $\mathbb R^{2,2}$~\cite{ONeill83,Milnor76,Lee06b}, with a bi-invariant Lorentzian metric of constant sectional curvature $-1$. Explicitly, the isometry $\mathbb R^{2,2}\to\mathfrak{gl}(2,\mathbb R)$ may be taken to be
\begin{equation}
\label{eq:identification}
(p_1,p_2,p_3,p_4)\longmapsto\begin{pmatrix}p_1-p_4&p_3-p_2\\p_3+p_2&p_1+p_4\end{pmatrix},
\end{equation}
under which $\langle X,X\rangle=-\det X=-p_1^2-p_2^2+p_3^2+p_4^2$; we use it in \S\ref{sec:cgc} for the example of a flat surface. Restricting to the Lie algebra $\sltwo=\operatorname{span}\{e_1,e_2,e_3\}$ gives the inner product $\langle X,Y\rangle=\tfrac12\operatorname{tr}(XY)$ on $\sltwo$, with
\[
\langle e_1,e_1\rangle=-1,\qquad\langle e_2,e_2\rangle=\langle e_3,e_3\rangle=1,
\]
so that $\sltwo\cong\mathbb R^{2,1}$, and
\begin{equation}
\label{eq:e-brackets}
[e_1,e_2]=2e_3,\qquad [e_2,e_3]=-2e_1,\qquad [e_3,e_1]=2e_2,
\end{equation}
that is, the Lie bracket is twice the Lorentzian cross product, $[X,Y]=2(X\times Y)$. The causal character of an element of $\sltwo$ --- spacelike, timelike or null --- coincides with its type as a matrix (semisimple with real, respectively imaginary, eigenvalues, or nilpotent), and is therefore invariant under conjugation. We write
\[
\Sph=\{X\in\sltwo\mid\langle X,X\rangle=1\}
\]
for the de Sitter $2$-sphere, a Lorentzian surface of constant curvature $1$; it is the target of the Gauss map of a timelike immersion.

The following identities are elementary consequences of the above and are used constantly.

\begin{proposition}
\label{prop:bracket-identities}
Let $X,Y,Z\in\sltwo$. Then
\begin{align*}
&\text{\rm(i) } \langle X,[X,Y]\rangle=0, &&\text{\rm(ii) } \langle[X,Y],[X,Y]\rangle=4\big(\langle X,Y\rangle^2-\langle X,X\rangle\langle Y,Y\rangle\big),\\
&\text{\rm(iii) } [[Z,X],[Z,Y]]=-4\langle Z,Z\rangle[X,Y], &&\text{\rm(iv) } \langle[Z,X],[Z,Y]\rangle=-4\langle Z,Z\rangle\langle X,Y\rangle,
\end{align*}
the last two under the assumption that $Z$ is orthogonal to both $X$ and $Y$. If moreover $\langle Z,Z\rangle=1$, then
\[
\text{\rm(v) } [Z,[Z,W]]=4W\qquad\text{for every }W\perp Z .
\]
\end{proposition}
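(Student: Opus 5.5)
The plan is to reduce everything to the identity $[X,Y]=2(X\times Y)$ from \eqref{eq:e-brackets}, together with the standard vector identities of the Lorentzian cross product on $\mathbb R^{2,1}$. Only the signs need care, because of the signature $(-,+,+)$. First I fix the sign conventions by checking the Lagrange-type identity on the basis:
\[
(X\times Y)\times Z=\epsilon\bigl(\langle X,Z\rangle Y-\langle Y,Z\rangle X\bigr),
\]
with a sign $\epsilon$ to be determined. I would evaluate on basis pairs, for example $(e_2\times e_3)\times e_2=-e_1\times e_2=-e_3$. With $\epsilon=-1$ the right-hand side is $-(\langle e_2,e_2\rangle e_3-0)=-e_3$, consistent. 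So $\epsilon=-1$, and I would verify one timelike case, such as $X=e_1$, as well. Alternatively, and more robustly, I would work directly with matrices. For $X,Y\in\sltwo$ one has $XY+YX=\operatorname{tr}(XY)\Id=2\langle X,Y\rangle\Id$, since $X^2=-\det X\,\Id=\langle X,X\rangle\Id$, and polarizing gives the anticommutator. All five items then follow from this anticommutator relation. This route is the one I commit to.

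(i) follows from cyclicity of the trace: $\operatorname{tr}(X(XY-YX))=\operatorname{tr}(X^2Y)-\operatorname{tr}(XYX)=0$.

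(ii) Expand $\tfrac12\operatorname{tr}\bigl((XY-YX)^2\bigr)=\operatorname{tr}(XYXY)-\operatorname{tr}(X^2Y^2)$. Use $XY=2\langle X,Y\rangle\Id-YX$, so $XYXY=2\langle X,Y\rangle XY-YX^2Y$. Taking traces gives $4\langle X,Y\rangle^2-\operatorname{tr}(X^2Y^2)$. With $X^2=\langle X,X\rangle\Id$ and $Y^2=\langle Y,Y\rangle\Id$, so that $\operatorname{tr}(X^2Y^2)=2\langle X,X\rangle\langle Y,Y\rangle$, this yields
\[
\langle[X,Y],[X,Y]\rangle=4\langle X,Y\rangle^2-4\langle X,X\rangle\langle Y,Y\rangle,
\]
as claimed.

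(v) and the perpendicular cases rest on one observation. If $Z\perp W$, then $ZW=-WZ$, so $[Z,W]=2ZW$. Hence
\[
[Z,[Z,W]]=2Z(2ZW)-2(2ZW)Z=4Z^2W+4Z^2W=8\langle Z,Z\rangle W.
\]
This gives $8W$ rather than the stated $4W$. So I must recheck via the cross product. There, $Z\times(Z\times W)$ with the Lagrange identity gives $-\bigl(\langle Z,W\rangle Z-\langle Z,Z\rangle W\bigr)=\langle Z,Z\rangle W$, and therefore $[Z,[Z,W]]=4\langle Z,Z\rangle W$. The discrepancy traces to the step $ZWZ=-Z^2W$ used in the matrix route. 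Here $[Z,W]Z=2ZWZ$, and $ZWZ=-Z^2W$ gives $-2\langle Z,Z\rangle W$, so the term is $-2(-2)$ and the total is again $8$. Meanwhile the basis check gives $[e_2,[e_2,e_3]]=[e_2,-2e_1]=2[e_1,e_2]=4e_3$, confirming $4$. The conflict must therefore come from $[Z,W]=2ZW$. Indeed $[e_2,e_3]=2e_2e_3$, and computing $e_2e_3=\begin{pmatrix}0&1\\-1&0\end{pmatrix}=-e_1$ gives $[e_2,e_3]=-2e_1$, which is consistent. Then $Z[Z,W]=2Z^2W$ and $[Z,W]Z=2ZWZ=-2Z^2W$, so the commutator is $4Z^2W=4\langle Z,Z\rangle W$. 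My earlier "8" was an arithmetic slip. This yields (v).

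(iii) and (iv) with $Z\perp X,Y$: write $[Z,X]=2ZX$ and $[Z,Y]=2ZY$. For (iv),
\[
\langle[Z,X],[Z,Y]\rangle=2\operatorname{tr}(ZXZY)=-2\operatorname{tr}(Z^2XY)=-2\langle Z,Z\rangle\operatorname{tr}(XY)=-4\langle Z,Z\rangle\langle X,Y\rangle.
\]
For (iii), $4(ZXZY-ZYZX)=4(-Z^2XY+Z^2YX)=-4\langle Z,Z\rangle[X,Y]$.

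The main obstacle is purely bookkeeping: keeping the anticommutation $ZX=-XZ$ for orthogonal elements and the relation $X^2=\langle X,X\rangle\Id$ straight. I would confirm each item with a basis check such as the one for (v) above.
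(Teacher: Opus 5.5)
Your argument is correct, and it takes a different route from the paper. The paper gives no detailed proof. It treats (i)--(iv) as elementary consequences of $[X,Y]=2(X\times Y)$ and Lorentzian vector algebra. It checks (v) by conjugating the unit spacelike $Z$ to $e_3$, using that every unit spacelike vector is $\Ad$-conjugate to $e_3$, and then evaluating on $e_1,e_2$ with \eqref{eq:e-brackets}.

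You instead derive everything from two ingredients. The first is the Clifford-type relation $XY+YX=2\langle X,Y\rangle\Id$, which is the same Cayley--Hamilton identity the paper invokes in \S\ref{sec:prelim-gauss}. The second is cyclicity of the trace. The single observation that $[Z,W]=2ZW$ when $Z\perp W$ then turns (iii), (iv) and (v) into one-line computations.

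This gives a basis-free proof that needs no transitivity statement. It also yields (v) in the stronger form $[Z,[Z,W]]=4\langle Z,Z\rangle W$ for every $Z$ and every $W\perp Z$, timelike and null $Z$ included. That makes transparent why $\langle Z,Z\rangle$ appears in (iii) and (iv). The paper's route is shorter to state once the cross-product picture is in place, but for (v) it relies on the homogeneity of $\Sph$ under conjugation.

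When you write it up, delete the false start in (v). The ``8'' came from the spurious factor $2$ in front of both terms in $2Z(2ZW)-2(2ZW)Z$. It did not come from $[Z,W]=2ZW$ or from $ZWZ=-Z^2W$, both of which are right.

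In (ii), state explicitly that $\operatorname{tr}(XYXY)=4\langle X,Y\rangle^2-\operatorname{tr}(X^2Y^2)$, so that the total is $4\langle X,Y\rangle^2-2\operatorname{tr}(X^2Y^2)$. As written, the second subtraction of $\operatorname{tr}(X^2Y^2)$ is left implicit.
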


Identity (v) is checked by conjugating $Z$ to $e_3$, which is possible since every unit spacelike vector is conjugate to $e_3$, and evaluating on $e_1,e_2$ using~\eqref{eq:e-brackets}. It says that $\operatorname{ad}_Z$ restricted to the Lorentzian plane $Z^\perp$ squares to four times the identity; in particular $\operatorname{ad}_Z$ is an isomorphism of $Z^\perp$, so it carries any basis of $Z^\perp$ to a basis. We use this repeatedly in Section~\ref{sec:cgc}.

\subsection{The second fundamental form and the Gauss map}
\label{sec:prelim-gauss}

For a bi-invariant metric on a matrix Lie group the Levi-Civita connection acts on left-invariant vector fields by $\nabla_XY=\tfrac12[X,Y]$. Let $f:\Omega\to\Hyp$ be a timelike immersion of a domain $\Omega\subset\mathbb R^2$, with unit normal $N$, so that $\langle N,N\rangle=1$. The \emph{Gauss map} of $f$ is obtained by left translation to the identity,
\[
\nu:=f^{-1}N:\Omega\longrightarrow\Sph\subset\sltwo .
\]
The interplay between this Gauss map and the second fundamental form, for surfaces in a Lie group with a bi-invariant metric, is studied in general in~\cite{FP16}; the moving-frame formalism used throughout is the classical one, for which see~\cite{JMN16}. Writing $u=f^{-1}f_x$ and $v=f^{-1}f_y$ for the left-translated tangent fields, the coefficients of the first and second fundamental forms are
\begin{gather*}
E=\langle u,u\rangle,\qquad \mathcal F=\langle u,v\rangle,\qquad G=\langle v,v\rangle,\\
L=\big\langle u_x,\nu\big\rangle,\qquad M=\big\langle v_x+\tfrac12[u,v],\nu\big\rangle,\qquad \mathcal N=\big\langle v_y,\nu\big\rangle,
\end{gather*}
the second line being the Levi-Civita connection written in left-translated form. It is convenient to record that these agree with the naive second derivatives: for $X,Y\in\sltwo$ the Cayley--Hamilton identity gives $XY+YX=2\langle X,Y\rangle\Id$, a multiple of the identity matrix and hence orthogonal to $\sltwo$, so from $f^{-1}f_{xx}=u^2+u_x$ and $f^{-1}f_{xy}=v_x+uv=v_x+\tfrac12(uv+vu)+\tfrac12[u,v]$ one gets
\begin{equation}
\label{eq:naive-second-derivatives}
L=\langle f_{xx},N\rangle,\qquad M=\langle f_{xy},N\rangle,\qquad \mathcal N=\langle f_{yy},N\rangle .
\end{equation}
We write $S$ for the shape operator, characterized by
\[
S\begin{pmatrix}f_x\\f_y\end{pmatrix}=\begin{pmatrix}N_x\\N_y\end{pmatrix},
\]
and $K$, $H$ for the Gaussian and mean curvature of $f$. Since the ambient space has constant sectional curvature $-1$, the Gauss equation reads
\begin{equation}
\label{eq:gauss-general}
K+1=\det S=\frac{\det II}{\det I}=\frac{L\mathcal N-M^2}{EG-\mathcal F^2},\qquad H=-\tfrac12\operatorname{tr}S,
\end{equation}
so that $K+1$ is the extrinsic curvature. A timelike immersion always admits, locally, two families of null curves for $I$; when $K+1>0$ the second fundamental form is a Lorentzian metric as well, and the corresponding null curves are the asymptotic curves of $f$.

Finally, a smooth map $\nu:\Omega\to\Sph$ from a Lorentz surface~\cite{Weinstein96} is \emph{harmonic} if it is a critical point of the energy of the corresponding conformal structure; in null coordinates $(x,y)$ for that structure, harmonicity is equivalent to
\begin{equation}
\label{eq:harmonic}
[\nu,\nu_{xy}]=0 .
\end{equation}
Since \eqref{eq:harmonic} only involves the null directions, harmonicity depends on the conformal class of the Lorentz structure and not on the metric itself. The two constructions of this paper differ precisely in which Lorentz structure on $\Omega$ is used: the one induced by $I$ in Section~\ref{sec:cmc}, and the one induced by $II$ in Section~\ref{sec:cgc}.

\subsection{Loop groups, Birkhoff decomposition and the DPW recipe}
\label{sec:prelim-dpw}

Let $\Lambda SL(2,\mathbb C)$ denote the group of smooth loops in the spectral parameter $\lambda$, and consider the reality involution $\varrho(\gamma(\lambda))=\overline{\gamma(\bar\lambda)}$, whose fixed set is the real form $\Lambda\SLR$. The two classes of frames used below sit differently with respect to twisting. The constant Gaussian curvature frames of \S\ref{sec:cgc-cauchy} satisfy the twisting condition for the involution $\sigma(\gamma)(\lambda)=\Ad_{e_3}\gamma(-\lambda)$, their $\lambda$-even part being a multiple of $e_3$ and their $\lambda^{\pm1}$ parts lying in the complement $\mathrm{span}(e_1,e_2)$, and on that side we work in the subgroup $G_\sigma$ of $\sigma$-fixed loops; this is what makes the projection $\Ad_Fe_3$ of an extended frame harmonic, as for any symmetric space. The constant mean curvature frames of \S\ref{sec:cmc-loop}, in the spectral parameter $\lambda$ used there, satisfy no such condition: by~\eqref{eq:admissible} their $\lambda$-even part contains the off-diagonal terms $A^2\xi_2\,dx$ and $B^1\xi_1\,dy$, while their $\lambda^{\pm1}$ parts are $A^1\xi_1\,dx$ and $B^2\xi_2\,dy$. They become $\sigma$-twisted after the substitution $\lambda=\zeta^2$ and a gauge by a constant diagonal matrix (Lemma~\ref{lem:cmc-twisted}). We keep the untwisted form in Section~\ref{sec:cmc}, because the Sym--Bobenko formula is simplest in it; harmonicity there comes from Theorem~\ref{thm:cmc-harmonic}. Write $\Lambda^+$, respectively $\Lambda^-$, for the subgroups of loops that extend holomorphically to the unit disc $\mathbb D$, respectively to $\mathbb P^1\setminus\mathbb D$ with diagonal value at $\infty$, and add a subscript $*$ for those normalized to the identity at $\lambda=0$, respectively $\lambda=\infty$. On the fixed-point set of $\varrho$ the spectral parameter is real, and it is in that real form that the timelike theory takes place; accordingly the potentials in Proposition~\ref{prop:dpw} are not required to be holomorphic in $\lambda$, and the recipe used below is the generalized one of~\cite{BD09}.

\begin{theorem}[Birkhoff decomposition, {\cite[Ch.~8]{PS86}}; real forms after~\cite{BD09}]
\label{thm:birkhoff}
Every loop $\gamma$ in the loop group $\Lambda GL(2,\mathbb R)$, defined as $\Lambda\SLR$ with $GL$ in place of $SL$, factors as $\gamma=\gamma_-D\gamma_+$ with $\gamma_\pm\in\Lambda^\pm$ and $D=\operatorname{diag}(\lambda^{k_1},\lambda^{k_2})$, and the set where $D$ can be taken to be $\Id$ --- the big cell --- is open and dense. For the real form $G:=\Lambda SL(2,\mathbb C)^{\varrho}=\Lambda\SLR$, and likewise for its subgroup $G_\sigma$ of $\sigma$-fixed loops, every element of the big cell $G_*^-\cdot G^+$ factors uniquely as $\hat F=\hat F_-\hat H_+$ with $\hat F_-\in G_*^-$ and $\hat H_+\in G^+$, and every element of $G_*^+\cdot G^-$ as $\hat F=\hat F_+\hat H_-$ with $\hat F_+\in G_*^+$ and $\hat H_-\in G^-$; here $G^\pm:=G\cap\Lambda^\pm$ and $G_*^\pm:=G\cap\Lambda_*^\pm$, and similarly for $G_\sigma$.
\end{theorem}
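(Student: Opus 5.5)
The plan is to derive everything from the complex Birkhoff theorem of~\cite[Ch.~8]{PS86} and then pass to the real forms by a uniqueness argument, in the spirit of~\cite{BD09}.

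\emph{Step 1: the complex statement.} For a loop $\gamma$ with values in $GL(2,\mathbb C)$, use $\gamma$ as a clutching function on the unit circle to glue trivial bundles over $\mathbb D$ and over $\mathbb P^1\setminus\mathbb D$. This gives a holomorphic rank-two bundle on $\mathbb P^1$. By Grothendieck's theorem the bundle splits as $\mathcal O(k_1)\oplus\mathcal O(k_2)$. Reading off trivializations over the two discs yields $\gamma=\gamma_-D\gamma_+$ with $D=\operatorname{diag}(\lambda^{k_1},\lambda^{k_2})$. A loop in $\Lambda GL(2,\mathbb R)$ is in particular such a loop, so existence of the factorization for it is immediate. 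The big cell is the set where the bundle is trivial, that is, where a Fredholm (Toeplitz) operator on $H^2(S^1,\mathbb C^2)$ is invertible. It is therefore open, and it is dense because its complement is the zero set of a (regularized) determinant, a proper analytic subset. For these analytic points I simply cite~\cite{PS86}; I expect them to be the main obstacle, since they carry all the genuine analysis, and I would not reprove them.

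\emph{Step 2: uniqueness on the big cell.} Suppose $\gamma_-\gamma_+=\gamma'_-\gamma'_+$, with $\gamma_-,\gamma'_-$ normalized to $\Id$ at $\infty$. Then
\[
(\gamma'_-)^{-1}\gamma_-=\gamma'_+\gamma_+^{-1}.
\]
The left side extends holomorphically to $\mathbb P^1\setminus\mathbb D$ and the right side extends holomorphically to $\mathbb D$. Together they define a holomorphic map $\mathbb P^1\to GL(2,\mathbb C)$, which must be constant. Its value at $\infty$ is $\Id$, so $\gamma_-=\gamma'_-$ and $\gamma_+=\gamma'_+$. The argument for the order $\gamma_+\gamma_-$, normalized at $\lambda=0$, is identical.

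\emph{Step 3: real forms and twisting.} The involution $\varrho$ preserves $\mathbb D$, its complement and the points $0,\infty$, because $\lambda\mapsto\bar\lambda$ does. It therefore maps $\Lambda^\pm$ to $\Lambda^\pm$ and $\Lambda^\pm_*$ to $\Lambda^\pm_*$. Similarly, $\sigma$ (the substitution $\lambda\mapsto-\lambda$ followed by $\Ad_{e_3}$) preserves both discs, the normalization points, and diagonal values at $\infty$, since $e_3$ is diagonal. Now take $\hat F\in G$ in the big cell with $\hat F=\hat F_-\hat H_+$. Then $\varrho(\hat F_-)\varrho(\hat H_+)$ is another factorization of $\hat F=\varrho(\hat F)$ of the same type. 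By Step 2 it coincides with the original one, so $\hat F_-\in G_*^-$ and $\hat H_+\in G^+$. The determinant condition passes to the factors because $\det\hat F_-$ is a scalar loop that is holomorphic on $\mathbb P^1\setminus\mathbb D$ and equals $1$ at $\infty$, while $\det\hat H_+=1/\det\hat F_-$ is holomorphic on $\mathbb D$. Together they define a holomorphic function on $\mathbb P^1$, hence a constant, so both determinants equal $1$. Applying the same reasoning with $\sigma$ shows that both factors are $\sigma$-fixed when $\hat F\in G_\sigma$. The factorization $\hat F_+\hat H_-$ is handled in the same way.
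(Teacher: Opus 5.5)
The paper gives no proof of this theorem. It quotes \cite[Ch.~8]{PS86} for the complex factorization and \cite{BD09} for the real forms. Your Steps~2 and~3 are the standard reduction behind that citation, and they are correct. $\varrho$ and $\sigma$ are homomorphisms preserving $\Lambda^\pm$ and $\Lambda^\pm_*$, so uniqueness forces the factors of a $\varrho$- or $\sigma$-fixed loop in the big cell to be fixed, and your determinant argument keeps them in $SL$. This gives $G\cap\Lambda^-_*\Lambda^+=G^-_*G^+$ (and likewise for $G_\sigma$), the openness of these sets, and unique factorization on them. That is everything the paper later uses.

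The gap is the density claim in Step~1. Your argument does not prove it, the citation of \cite{PS86} as you use it does not cover it, and it is false as stated.

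\emph{Density fails off winding zero.} If $\gamma=\gamma_-\gamma_+$, then $\det\gamma_\pm$ are nonvanishing holomorphic functions on the two discs, so $\det\gamma$ has winding number zero. Hence the real-coefficient loop $\operatorname{diag}(\lambda,1)$ has a whole neighbourhood in $\Lambda GL(2,\mathbb R)$ disjoint from the big cell. On such components the Toeplitz operator has nonzero Fredholm index and is never invertible. So the complement of the big cell is the entire component, not a proper analytic subset. Density can only hold where $\det\gamma$ has winding number zero, which is automatic for $SL$.

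\emph{Density does not descend to the real form.} Even on the winding-zero part, density in the complex loop group does not pass to the real-coefficient loops. These form a closed subgroup with empty interior, and its intersection with a dense open set need not be dense in it. One extra step fixes this. Suppose the holomorphic function $\delta$ cutting out the big cell vanished on a nonempty open subset of the real form. At a point there, every derivative of $\delta$ along real loop-algebra directions would vanish, hence by complex multilinearity every derivative. So $\delta$ would vanish near that point in $\Lambda GL(2,\mathbb C)$, hence on the whole connected winding-zero component, contradicting $\delta(\Id)\ne0$.

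\emph{The normalization at $\infty$.} The paper requires loops in $\Lambda^-$ to have diagonal value at $\infty$, a convention you invoke in Step~3. With it, the first sentence fails off the big cell. Take $\gamma=c\,\operatorname{diag}(1,\lambda)$ with $c=\begin{pmatrix}1&0\\1&1\end{pmatrix}$. The identity $\gamma_-^{-1}c=D\gamma_+\operatorname{diag}(1,\lambda^{-1})$ shows that $(\gamma_-(\infty)^{-1}c)_{i1}=0$ for every $i$ with $k_i\ge1$. Such an $i$ exists because $k_1+k_2=1$, so $\gamma_-(\infty)$ can never be diagonal. Your clutching argument therefore proves the first sentence only for the unnormalized $\Lambda^-$ of \cite{PS86}. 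On the big cell the normalization costs nothing, since $\gamma_-(\infty)$ can be absorbed into $\gamma_+$.
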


\begin{proposition}[the DPW recipe~\cite{DPW98}, in the generalized form of~\cite{BD09}]
\label{prop:dpw}
Given a pair of potentials
\[
\mathcal X(x)=\sum_{k\le1}\mathcal A_k\lambda^{k}\,dx,\qquad \mathcal Y(y)=\sum_{k\ge-1}\mathcal B_k\lambda^{k}\,dy,
\]
integrate $X^{-1}dX=\mathcal X$ and $Y^{-1}dY=\mathcal Y$ with prescribed initial conditions, perform the Birkhoff factorization $X^{-1}(x)Y(y)=H_-(x,y)H_+(x,y)$, and set
\[
\hat F(x,y)=X(x)H_-(x,y)=Y(y)H_+^{-1}(x,y).
\]
This is available on the open set of those $(x,y)$ for which $X(x)^{-1}Y(y)$ lies in the big cell of Theorem~\ref{thm:birkhoff}, and not in general on the whole of the domain: the big cell is open and dense but, the group here being noncompact, it is not everything. The set does contain the locus $X(x)=Y(y)$, where the factorization is trivial. On that open set, $\hat F^{-1}d\hat F$ has the \emph{admissible} form $(C_0+C_1\lambda)\,dx+(\mathcal D_{-1}\lambda^{-1}+\mathcal D_0)\,dy$ for some $\sltwo$-valued functions $C_i,\mathcal D_i$ of $(x,y)$ --- that is, its $\lambda$-support is $\{0,1\}$ in $dx$ and $\{-1,0\}$ in $dy$. Conversely every extended frame whose Maurer--Cartan form has this shape arises in this way, from potentials obtained by factorizing $\hat F$ itself.
\end{proposition}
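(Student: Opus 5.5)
The statement to be proved is Proposition~\ref{prop:dpw}, and I would prove it in three parts: the forward construction, the shape of the Maurer--Cartan form, and the converse.

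\emph{Forward construction.} Integrate the two ordinary differential equations $X^{-1}dX=\mathcal X$ and $Y^{-1}dY=\mathcal Y$; each is linear and depends on a single variable, so solutions exist on the whole interval. The potentials are $\varrho$-real, so $X$ and $Y$ take values in $G=\Lambda\SLR$, and hence so does $X^{-1}Y$. The set of $(x,y)$ where $X(x)^{-1}Y(y)$ lies in the big cell is the preimage of an open set under a continuous map, so it is open. It contains the locus $X(x)=Y(y)$, since there the product is $\Id$ and one may take $H_\pm=\Id$. On this open set Theorem~\ref{thm:birkhoff} gives a unique factorization $X^{-1}Y=H_-H_+$ with $H_-\in G^-_*$ and $H_+\in G^+$. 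Both factors depend smoothly on $(x,y)$, because the multiplication map $G_*^-\times G^+\to G$ is a diffeomorphism onto the big cell. Rearranging $X^{-1}Y=H_-H_+$ gives $XH_-=YH_+^{-1}$, so the two expressions for $\hat F$ agree.

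\emph{Shape of the Maurer--Cartan form.} This is the main step. Compute $\alpha=\hat F^{-1}d\hat F$ in two ways:
\[
\alpha=H_-^{-1}\mathcal X H_-+H_-^{-1}dH_-=H_+\mathcal Y H_+^{-1}+H_+\,d(H_+^{-1}).
\]
Start with the first expression. Since $H_-\in\Lambda^-_*$ and $\mathcal X$ has powers $\lambda^k$ with $k\le1$, it contains only powers $\le1$. Since $X$ depends on $x$ alone, $\mathcal X$ has no $dy$ component, so the $dy$-part of $\alpha$ equals $H_-^{-1}\partial_yH_-\,dy$. This involves only powers $\le-1$ together with a $\lambda^0$ term, and that term vanishes because $H_-$ is normalized at infinity; in other words the $dy$-part has support in negative powers. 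Now use the second expression. It contains only powers $\ge-1$, and its $dx$-part is $H_+\partial_x(H_+^{-1})\,dx$, which has support in nonnegative powers. Comparing the two expressions term by term, the $dx$-part is supported in $\{0,1\}$ and the $dy$-part in $\{-1,0\}$. Each coefficient is $\sltwo$-valued because $\alpha$ is $\varrho$-real. (The $\lambda^0$ term in $dy$ actually comes from $H_+$; the bookkeeping is the standard DPW two-sided comparison, which must be carried out carefully.)

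\emph{Converse.} Given an admissible $\hat F$, fix a base point $(x_0,y_0)$ and work on the neighbourhood where $\hat F(x,y_0)^{-1}$-type products lie in the big cells. Factor $\hat F=\hat F_+\hat H_-$ with $\hat F_+\in G^+_*$ and $\hat F=\hat F_-\hat H_+$ with $\hat F_-\in G^-_*$. The argument of the previous step, run in reverse, shows that $\hat F_-^{-1}d\hat F_-$ has support in powers $\le1$ and no $dy$ term. Hence $\hat F_-$ depends on $x$ alone, and I set $X=\hat F_-$. Symmetrically $Y=\hat F_+$ depends on $y$ alone and has potential supported in powers $\ge-1$. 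Then
\[
X^{-1}Y=\hat H_+\hat H_-^{-1}.
\]
This is not yet in the order required by the recipe, so I would renormalize by constant diagonal factors, which is where noncompactness might interfere. Uniqueness of the Birkhoff factorization then recovers $\hat F$ up to these constant initial conditions. I expect this reorganization of the factors into the form $H_-H_+$ to be the only delicate point.
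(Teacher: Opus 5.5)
Your forward direction is the standard two-sided degree comparison and it works. The paper itself recalls this proposition from \cite{DPW98,BD09} without proof. The comparison does, however, give more than you state. With $H_-\in G^-_*$, the $dy$-part $H_-^{-1}\partial_yH_-$ has degrees $\le-1$, and the second expression has degrees $\ge-1$. So the $dy$-part is supported in $\{-1\}$ alone. There is no $\lambda^0$ term ``from $H_+$''; your parenthetical is wrong, and this matters below.

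The converse fails as written, because you have swapped the two factors. From $\hat F=\hat F_-\hat H_+$ with $\hat F_-\in G^-_*$ one gets
\[
\hat F_-^{-1}d\hat F_-=\hat H_+\,\alpha\,\hat H_+^{-1}+\hat H_+\,d(\hat H_+^{-1}).
\]
The left side has degrees $\le-1$. The $dx$-part of the right side has degrees $\ge0$, since the $dx$-part of $\alpha$ is supported in $\{0,1\}$. So it is the $dx$-part that vanishes, not the $dy$-part.
\begin{itemize}
\item $\hat F_-$ depends on $y$ alone, with potential $\lambda^{-1}\Ad_{\hat H_+|_{\lambda=0}}\mathcal D_{-1}\,dy$.
\item Symmetrically, $\hat F_+$ depends on $x$ alone, with potential $\lambda\Ad_{\hat H_-|_{\lambda=\infty}}C_1\,dx$.
\end{itemize}
Your choice $X=\hat F_-$ is therefore not a function of $x$, and that step fails.

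With the correct choice $X=\hat F_+$, $Y=\hat F_-$, one gets $X^{-1}Y=\hat H_-\hat H_+^{-1}$. This is already in the minus--plus order the recipe requires. So the reordering you flag as the delicate point is an artifact of the swap. It could not have been done by constant diagonal factors in any case.

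What genuinely remains is a normalization: $\hat H_-$ lies in $G^-$ but not in $G^-_*$. Put $D=\hat H_-|_{\lambda=\infty}$. It is independent of $\lambda$ but depends on $(x,y)$, so it is neither constant nor absorbable into initial conditions. The normalized factorization is $X^{-1}Y=(\hat H_-D^{-1})(D\hat H_+^{-1})$, and the recipe returns $XH_-=\hat FD^{-1}$. Thus $\hat F$ is recovered only up to a $\lambda$-independent right gauge.

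This cannot be improved. By your own forward computation, no output of the recipe has a $\lambda^0$ term in $dy$, so a frame with $\mathcal D_0\ne0$ is never reproduced exactly. The converse has to be read modulo such gauges.

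Finally, make the domain precise. After left-normalizing $\hat F(x_0,y_0)=\Id$, work on the neighbourhood of the base point where $\hat F$ lies in both big cells $G^-_*\cdot G^+$ and $G^+_*\cdot G^-$.
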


Proposition~\ref{prop:dpw} is what converts the harmonic map problem into the choice of two potentials together with a factorization. Sections~\ref{sec:cmc} and~\ref{sec:cgc} specialize it, in two different ways.

\section{Timelike surfaces of constant mean curvature}
\label{sec:cmc}

In this section the Gauss map is treated as a map from the Lorentz surface $(\Omega,I)$, that is, harmonicity is measured against the conformal structure of the \emph{first} fundamental form. Throughout, $f:\Omega\to\Hyp$ is a timelike immersion with unit normal $N$, and $(x,y)$ are null coordinates for $I$, so that $\langle f_x,f_x\rangle=\langle f_y,f_y\rangle=0$ and $I=\varepsilon e^\omega\,dx\,dy$ in the sense that $2\langle f_x,f_y\rangle=\varepsilon e^\omega$ with $\varepsilon=\pm1$.

\subsection{The adapted frame and the Gauss--Codazzi system}
\label{sec:cmc-frame}

It is convenient to replace the basis $\{e_1,e_2,e_3\}$ of $\sltwo$ by a basis adapted to the null directions,
\[
\xi_1=\frac{e_1+e_2}2=\begin{pmatrix}0&0\\1&0\end{pmatrix},\qquad \xi_2=\frac{-e_1+e_2}2=\begin{pmatrix}0&1\\0&0\end{pmatrix},
\]
for which $\langle\xi_1,\xi_1\rangle=\langle\xi_2,\xi_2\rangle=0$, $\langle\xi_1,\xi_2\rangle=\tfrac12$ and $\langle\xi_i,e_3\rangle=0$. From \eqref{eq:e-brackets},
\begin{equation}
\label{eq:xi-brackets}
[\xi_1,\xi_2]=e_3,\qquad [\xi_1,e_3]=-2\xi_1,\qquad [\xi_2,e_3]=2\xi_2,
\end{equation}
while as matrices
\begin{equation}
\label{eq:xi-products}
\xi_1^2=\xi_2^2=0,\qquad \xi_1\xi_2=\tfrac12(\Id+e_3),\qquad \xi_2\xi_1=\tfrac12(\Id-e_3).
\end{equation}
The nilpotency in \eqref{eq:xi-products} is what makes the computations below short, and it has no counterpart in the spacelike theory.

We shall use a frame in which both null tangent directions are scaled by the \emph{same} factor. That such a frame exists is not a property of the coordinates and cannot be arranged by reparametrizing them; it is a property of the gauge freedom in the choice of frame, as the following lemma shows.

\begin{lemma}[the adapted frame]
\label{lem:adapted-frame}
Let $f$ be as above. Then there exist $\varepsilon_1,\varepsilon_2\in\{\pm1\}$ with $\varepsilon_1\varepsilon_2=\varepsilon$, locally constant, and a frame $F:\Omega\to\SLR$ such that
\begin{equation}
\label{eq:adapted-frame}
f^{-1}f_x=\varepsilon_1e^{\omega/2}\Ad_F\xi_1,\qquad f^{-1}f_y=\varepsilon_2e^{\omega/2}\Ad_F\xi_2,\qquad N=f\Ad_Fe_3 .
\end{equation}
Such an $F$ is unique up to sign, so that $\Ad_F$ is uniquely determined.
\end{lemma}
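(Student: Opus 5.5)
Write $u=f^{-1}f_x$, $v=f^{-1}f_y$ and $\nu=f^{-1}N$. These are $\sltwo$-valued, and the plan is to produce $F$ by a linear-algebra argument carried out pointwise in $\mathbb R^{2,1}\cong\sltwo$, then check smoothness and uniqueness.

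\emph{Data at a point.} By hypothesis $\langle u,u\rangle=\langle v,v\rangle=0$ and $2\langle u,v\rangle=\varepsilon e^\omega$. The vector $\nu$ is a unit spacelike vector orthogonal to both $u$ and $v$. The triple $(\xi_1,\xi_2,e_3)$ satisfies the same relations with $2\langle\xi_1,\xi_2\rangle=1$.

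\emph{Rescaling to the model.} Set
\[
a=\varepsilon_1e^{-\omega/2}u,\qquad b=\varepsilon_2e^{-\omega/2}v,
\]
with signs to be chosen. Then $a,b$ are null and $2\langle a,b\rangle=\varepsilon_1\varepsilon_2\varepsilon$. Choose $\varepsilon_1\varepsilon_2=\varepsilon$, which gives $2\langle a,b\rangle=1$. Then $(a,b,\nu)$ and $(\xi_1,\xi_2,e_3)$ are bases with identical Gram matrices. Hence the linear map $T$ sending $\xi_1\mapsto a$, $\xi_2\mapsto b$, $e_3\mapsto\nu$ lies in $O(2,1)$.

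\emph{Landing in $\Ad(\SLR)$.} The image $\Ad(\SLR)$ is the identity component $SO^+(2,1)$, of index $4$ in $O(2,1)$. It remains to use the remaining sign freedom, which is the overall sign $(\varepsilon_1,\varepsilon_2)\mapsto(-\varepsilon_1,-\varepsilon_2)$, together with the fact that $N$ is given. Replacing $(a,b)$ by $(-a,-b)$ composes $T$ with $-\Id$ on $\mathrm{span}(\xi_1,\xi_2)$. This map has determinant $+1$ and reverses time orientation, since it sends the timelike vector $e_1=\xi_1-\xi_2$ to $-e_1$.

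So the time-orientation component can always be fixed by choosing the overall sign. What remains is $\det T=+1$. For this I would use the identity $[\xi_1,\xi_2]=e_3$ from \eqref{eq:xi-brackets}, or equivalently $e_3=2\,\xi_1\times\xi_2$. An element of $O(2,1)$ preserving time orientation satisfies $T(X\times Y)=\det T\,(TX\times TY)$. So $\det T=1$ iff $\nu=[a,b]$. Now $[a,b]=\varepsilon e^{-\omega}[u,v]$ is a unit vector orthogonal to $u,v$ by Proposition~\ref{prop:bracket-identities}(i),(ii), hence $\nu=\pm[a,b]$.

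This is the main obstacle. The sign here is not under our control: it is fixed by the orientation of $(f_x,f_y,N)$, and $N$ is prescribed. The resolution I would attempt is to allow swapping which null coordinate is called $x$. Failing that, $\varepsilon$ itself changes sign when $x\mapsto-x$; but since the statement fixes the coordinates, I expect instead that the convention $2\langle f_x,f_y\rangle=\varepsilon e^\omega$ with free $\varepsilon$ absorbs it. Concretely, compute
\[
[a,b]=\varepsilon_1\varepsilon_2e^{-\omega}[u,v]=\varepsilon e^{-\omega}[u,v],
\]
and check whether the normalization $\nu=\varepsilon e^{-\omega}[u,v]$ is forced. If not, one may need to replace $N$ by $-N$ implicitly, i.e.\ interpret the lemma as choosing the normal orientation. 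I would first verify on the example $f=\exp$ of a simple ruled surface which sign occurs.

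\emph{Smoothness, local constancy and uniqueness.} Given $T\in SO^+(2,1)$ depending smoothly on the point, it lifts locally smoothly through the double cover $\Ad:\SLR\to SO^+(2,1)$ with kernel $\pm\Id$, giving $F$ unique up to sign. The signs $\varepsilon_i$ are locally constant by continuity, since the alternative choice lands in a different component of $O(2,1)$.

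For uniqueness, two frames satisfying \eqref{eq:adapted-frame} give $\Ad_{F^{-1}F'}$ fixing $e_3$ and sending $\xi_i\mapsto\pm\xi_i$ with the same sign; the opposite sign is excluded because it reverses time orientation. Hence $\Ad_{F^{-1}F'}=\Id$ and $F'=\pm F$.
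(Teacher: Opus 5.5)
The gap is the orientation step, which you leave open, and two of the three fixes you float do not work. Only the product $\varepsilon_1\varepsilon_2=\varepsilon$ enters, so $[a,b]=\varepsilon e^{-\omega}[u,v]$ is determined by $f$ and the \emph{ordered} pair of null coordinates, and $\varepsilon=\operatorname{sign}\langle f_x,f_y\rangle$ is fixed. No choice of the $\varepsilon_i$ can therefore absorb the sign. Replacing $x$ by $-x$ does not help either: it changes the signs of both $\varepsilon$ and $u$, so $\varepsilon[u,v]$ is unchanged.

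Checking an example will not settle the question either. The condition $\nu=\varepsilon e^{-\omega}[u,v]$ is equivalent to $[\nu,f^{-1}f_x]=2f^{-1}f_x$, that is, $f^{-1}f_x$ lies on the $+2$-eigenline of $\operatorname{ad}_\nu$, which is $\mathbb R\Ad_F\xi_1$ because $[e_3,\xi_1]=2\xi_1$. This condition is genuinely not forced: if it holds for $N$, it fails for $-N$. So for a fixed $f$, $N$ and ordered $(x,y)$ the lemma can be false. The fix that keeps the prescribed normal is your first one. Interchanging $x$ and $y$ sends $[u,v]\mapsto-[u,v]$ while leaving $\varepsilon$ and $\omega$ unchanged, so exactly one ordering works. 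Since $\langle\varepsilon e^{-\omega}[u,v],\nu\rangle=\pm1$ is continuous, it is locally constant, and a single interchange works on each connected component. This is exactly the caveat the paper builds into its proof (``interchanging the null coordinates $x$ and $y$ if necessary''). You need to commit to it rather than leave the sign to be checked.

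With that settled, the rest of your argument is correct and takes a genuinely different route. The paper starts from any $F_0$ with $\Ad_{F_0}e_3=\nu$, so it never leaves $\Ad(\SLR)$:
\begin{itemize}
\item the determinant issue appears as the question of which null line of $\nu^\perp$ is $\mathbb R\Ad_{F_0}\xi_1$, and this cannot be gauged away because the stabilizer $\{\pm\exp(\tau e_3)\}$ of $e_3$ preserves each null line;
\item time orientation never has to be discussed, since the signs are simply read off as $\varepsilon_i=\operatorname{sign}\kappa_i$;
\item the residual gauge $\exp(\tau e_3)$ with $\tau=\tfrac14\log|\kappa_1/\kappa_2|$ equalizes the two scales.
\end{itemize}
You instead write down the target isometry $T$ and locate it among the four components of $O(2,1)$. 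The overall sign of $(\varepsilon_1,\varepsilon_2)$ handles time orientation, and the ordering of the coordinates handles the determinant. Your version gives an explicit criterion, $\nu=\varepsilon e^{-\omega}[u,v]$, for when no interchange is needed, and a clean uniqueness proof. The paper's version is shorter and makes the balancing gauge explicit. Your smoothness, local-constancy and uniqueness arguments are fine; the lift through $\Ad$ is local, as is the paper's choice of $F_0$.
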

\begin{proof}
Since $\nu$ is unit spacelike, $\nu^\perp$ is a Lorentzian plane and meets the null cone of $\sltwo$ in exactly two lines, which carry $f^{-1}f_x$ and $f^{-1}f_y$ respectively. Pick $F_0$ with $\nu=\Ad_{F_0}e_3$; then $\Ad_{F_0}$ carries the two null lines of $e_3^\perp$, namely $\mathbb R\xi_1$ and $\mathbb R\xi_2$, to those two lines. Interchanging the null coordinates $x$ and $y$ if necessary --- which is harmless, as they carry no preferred order --- we may assume that $\mathbb R\Ad_{F_0}\xi_1$ is the line of $f^{-1}f_x$, so that
\[
f^{-1}f_x=\kappa_1\Ad_{F_0}\xi_1,\qquad f^{-1}f_y=\kappa_2\Ad_{F_0}\xi_2
\]
for nowhere-vanishing functions $\kappa_1,\kappa_2$ with $\kappa_1\kappa_2=2\langle f_x,f_y\rangle=\varepsilon e^{\omega}$. In general $|\kappa_1|\ne|\kappa_2|$, and no reparametrization $x\mapsto\varphi(x)$, $y\mapsto\psi(y)$ preserving the null coordinates can be expected to correct this, since it would require $|\kappa_1/\kappa_2|$ to factor as $\varphi'(x)/\psi'(y)$. The correction comes instead from the residual gauge freedom $F_0\mapsto F_0\exp(\tau e_3)$, which preserves $\nu$. Since $[e_3,\xi_1]=2\xi_1$ and $[e_3,\xi_2]=-2\xi_2$ by~\eqref{eq:xi-brackets},
\[
\Ad_{\exp(\tau e_3)}\xi_1=e^{2\tau}\xi_1,\qquad \Ad_{\exp(\tau e_3)}\xi_2=e^{-2\tau}\xi_2,
\]
so this gauge rescales the two null directions by reciprocal factors and leaves the product $\kappa_1\kappa_2$, hence $\varepsilon e^\omega$, unchanged. Setting
\[
\tau=\tfrac14\log\left|\frac{\kappa_1}{\kappa_2}\right|,\qquad F=F_0\exp(\tau e_3),\qquad \varepsilon_i=\operatorname{sign}\kappa_i,
\]
one gets $\kappa_1=\varepsilon_1e^{\omega/2}e^{2\tau}$ and $\kappa_2=\varepsilon_2e^{\omega/2}e^{-2\tau}$ with $e^{\omega}=|\kappa_1\kappa_2|$, which is~\eqref{eq:adapted-frame}. The signs $\varepsilon_i$ are locally constant because $\kappa_i$ is continuous and nowhere zero, and $\varepsilon_1\varepsilon_2=\operatorname{sign}(\kappa_1\kappa_2)=\varepsilon$. Finally, the stabilizer of $e_3$ in $\SLR$ is $\{\pm\exp(\tau'e_3)\}$, and a gauge $\exp(\tau'e_3)$ preserves~\eqref{eq:adapted-frame} only if $e^{2\tau'}=e^{-2\tau'}=1$, that is $\tau'=0$; the residual freedom is therefore $F\mapsto-F$, which does not change $\Ad_F$.
\end{proof}

With this frame fixed, write the Hopf differentials $Q\,dx^2$ and $R\,dy^2$, with $Q=\langle f_{xx},N\rangle$ and $R=\langle f_{yy},N\rangle$, and let the mean curvature be $H=2\varepsilon e^{-\omega}\langle f_{xy},N\rangle$, so that
\[
II=Q\,dx^2+\varepsilon e^\omega H\,dx\,dy+R\,dy^2 .
\]

\begin{lemma}[{essentially~\cite[\S5]{Lee06}}]
\label{lem:frame-coeffs}
Writing $F^{-1}F_x=A^1\xi_1+A^2\xi_2+A^3e_3$ and $F^{-1}F_y=B^1\xi_1+B^2\xi_2+B^3e_3$, one has
\[
A^1=\tfrac12\varepsilon_1e^{\omega/2}(H-1),\qquad A^2=-\varepsilon_1e^{-\omega/2}Q,\qquad A^3=\tfrac14\omega_x,
\]
\[
B^1=\varepsilon_2e^{-\omega/2}R,\qquad B^2=-\tfrac12\varepsilon_2e^{\omega/2}(H+1),\qquad B^3=-\tfrac14\omega_y .
\]
\end{lemma}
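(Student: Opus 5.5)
The plan is to transport everything back to the identity by $\Ad_{F^{-1}}$, where the tangent fields become the constant directions $\xi_1,\xi_2$ scaled by $e^{\omega/2}$ and the normal becomes $e_3$. Then each coefficient $A^i,B^i$ is read off from one scalar identity: either the definition of $Q$, $R$ or $H$, or the derivative of the metric coefficient $2\langle f_x,f_y\rangle=\varepsilon e^\omega$.

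Write $u=f^{-1}f_x$, $v=f^{-1}f_y$, $\nu=\Ad_Fe_3$, and set
\[
U=\Ad_{F^{-1}}u=\varepsilon_1e^{\omega/2}\xi_1,\qquad V=\Ad_{F^{-1}}v=\varepsilon_2e^{\omega/2}\xi_2 .
\]
With $\alpha=F^{-1}F_x$ and $\beta=F^{-1}F_y$, differentiating gives
\[
u_x=\Ad_F\big(U_x+[\alpha,U]\big),\qquad u_y=\Ad_F\big(U_y+[\beta,U]\big),
\]
and similarly for $v$. Since $\Ad_F$ is an isometry, all inner products can be computed after removing $\Ad_F$, so $\nu$ becomes $e_3$. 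By \eqref{eq:naive-second-derivatives} and the definitions of \S\ref{sec:prelim-gauss},
\[
Q=\langle u_x,\nu\rangle,\qquad R=\langle v_y,\nu\rangle,\qquad \tfrac12\varepsilon e^\omega H=\big\langle v_x+\tfrac12[u,v],\nu\big\rangle=\big\langle u_y-\tfrac12[u,v],\nu\big\rangle .
\]
The two expressions for the mixed term agree because $f^{-1}f_{xy}$ can be expanded from either side.

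\emph{Normal components.} Use the brackets \eqref{eq:xi-brackets}. The $e_3$-component of $[\alpha,\xi_1]$ is $-A^2$, since $[\xi_2,\xi_1]=-e_3$ and $[e_3,\xi_1]\perp e_3$. Hence $Q=-\varepsilon_1e^{\omega/2}A^2$, which gives $A^2$. In the same way the $e_3$-component of $[\beta,\xi_2]$ is $B^1$, which gives $R=\varepsilon_2e^{\omega/2}B^1$. For the mean curvature, $\tfrac12[U,V]=\tfrac12\varepsilon e^\omega e_3$, and the $e_3$-component of $[\alpha,\xi_2]$ is $A^1$. This yields
\[
\tfrac12\varepsilon e^\omega H=\varepsilon_2e^{\omega/2}A^1+\tfrac12\varepsilon e^\omega .
\]
Using $\varepsilon/\varepsilon_2=\varepsilon_1$, this gives $A^1=\tfrac12\varepsilon_1e^{\omega/2}(H-1)$. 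The second expression for the mixed term reads
\[
\tfrac12\varepsilon e^\omega H=-\varepsilon_1e^{\omega/2}B^2-\tfrac12\varepsilon e^\omega ,
\]
which gives $B^2=-\tfrac12\varepsilon_2e^{\omega/2}(H+1)$. The sign bookkeeping between $\varepsilon,\varepsilon_1,\varepsilon_2$ is the only delicate point here.

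\emph{Tangential components.} For $A^3$, differentiate $\langle u,v\rangle=\tfrac12\varepsilon e^\omega$ in $x$. The Levi-Civita connection in left-translated form is $u_x+\tfrac12[u,u]=u_x$. Combined with $\langle f_x,f_x\rangle=0$ (so $\langle f_{xx},f_x\rangle=0$) and $\langle f_{xy},f_x\rangle=\tfrac12\partial_y\langle f_x,f_x\rangle=0$, this gives
\[
\langle u_x,v\rangle=\tfrac12\varepsilon e^\omega\omega_x .
\]
The $\xi_1$-component of $U_x+[\alpha,U]$ is $\varepsilon_1e^{\omega/2}(\tfrac12\omega_x+2A^3)$, using $[e_3,\xi_1]=2\xi_1$. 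Pairing with $V$ via $\langle\xi_1,\xi_2\rangle=\tfrac12$ therefore gives
\[
\tfrac12\varepsilon e^\omega(\tfrac12\omega_x+2A^3)=\tfrac12\varepsilon e^\omega\omega_x,
\]
so $A^3=\tfrac14\omega_x$. The symmetric computation with $\langle v_y,u\rangle=\tfrac12\varepsilon e^\omega\omega_y$ and $[e_3,\xi_2]=-2\xi_2$ gives $B^3=-\tfrac14\omega_y$.

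The main obstacle I expect is justifying that the covariant-derivative identities hold in left-translated form, i.e.\ $\langle u_x,v\rangle=\langle f_{xx},f_y\rangle$ and $\langle v_y,u\rangle=\langle f_{yy},f_x\rangle$. I would handle this exactly as \eqref{eq:naive-second-derivatives} was obtained: $u^2$ is a multiple of $\Id$ by Cayley--Hamilton, hence orthogonal to $\sltwo$, so it drops out of the pairing with $v$. Everything else is bracket bookkeeping with \eqref{eq:xi-brackets}.
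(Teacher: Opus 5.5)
Your proposal is correct. The four coefficients $A^1,A^2,B^1,B^2$ are obtained exactly as in the paper, by reading off $e_3$-components after stripping $\Ad_F$. Your use of $\tfrac12[u,v]$ in place of the matrix products $uv$ or $vu$ changes nothing, since they differ by the multiple $\langle u,v\rangle\Id$ of the identity, which is orthogonal to $\sltwo$.

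You genuinely diverge only in $A^3$ and $B^3$, and both routes are valid.
\begin{itemize}
\item You use metric compatibility. You differentiate $2\langle u,v\rangle=\varepsilon e^\omega$ and extract the $\xi_1$-component of $\Ad_F^{-1}u_x$ by pairing with the opposite null vector $V\propto\xi_2$. Likewise you extract the $\xi_2$-component of $\Ad_F^{-1}v_y$ by pairing with $U\propto\xi_1$.
\item The paper uses torsion-freeness instead. It requires the two expansions $v_x+uv=u_y+vu$ of $f^{-1}f_{xy}$, already written down for $A^1$ and $B^2$, to agree in their $\xi_1$- and $\xi_2$-components as well.
\end{itemize}

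The paper's route costs nothing extra, since those expansions are already on the page. It also shows that all six coefficients come from the second derivatives of $f$ alone. Your route is the more familiar one: the $e_3$-part of the Maurer--Cartan form of a null frame is fixed by derivatives of the conformal factor. It explains directly why $A^3,B^3$ are $\pm\tfrac14\omega_x,\pm\tfrac14\omega_y$. The price is two auxiliary facts, $\langle f_{xy},f_x\rangle=0$ and $\langle f^{-1}f_{xx},v\rangle=\langle u_x,v\rangle$. Both arguments ultimately rest on $f_{xy}=f_{yx}$.

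The obstacle you anticipated is not really one. Since $u$ is null, $u^2=0$ by \eqref{eq:xi-products}, so $f^{-1}f_{xx}=u_x$ outright, with no need for the Cayley--Hamilton step.
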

\begin{proof}
Throughout write $u=f^{-1}f_x=\varepsilon_1e^{\omega/2}\Ad_F\xi_1$ and $v=f^{-1}f_y=\varepsilon_2e^{\omega/2}\Ad_F\xi_2$. Two elementary identities, valid for any smooth map into a matrix group, organize the computation. Differentiating $u=f^{-1}f_x$ once more in $x$ gives $u_x=f^{-1}f_{xx}-u^2$, so that
\[
f^{-1}f_{xx}=u^2+u_x,\qquad\text{and symmetrically}\qquad f^{-1}f_{yy}=v^2+v_y ,
\]
while differentiating $v$ in $x$, respectively $u$ in $y$, gives the two necessarily equal expansions
\begin{equation}
\label{eq:two-expansions}
f^{-1}f_{xy}=v_x+uv=u_y+vu .
\end{equation}
By \eqref{eq:xi-products} we have $u^2=v^2=0$, which removes the quadratic terms from the first pair. Moreover, since the metric is bi-invariant and $N=f\Ad_Fe_3$, the pairing with $N$ of each second derivative $f_{\ast\ast}\in\{f_{xx},f_{xy},f_{yy}\}$ reduces through
\[
\langle f_{\ast\ast},N\rangle=\langle f^{-1}f_{\ast\ast},\Ad_Fe_3\rangle=\langle \Ad_F^{-1}(f^{-1}f_{\ast\ast}),e_3\rangle
\]
to reading off an $e_3$-coefficient, using $\langle\xi_1,e_3\rangle=\langle\xi_2,e_3\rangle=0$, $\langle e_3,e_3\rangle=1$, and the fact that the identity matrix is orthogonal to $\sltwo$.

\emph{The coefficient $A^2$.} Since $u^2=0$,
\[
f^{-1}f_{xx}=u_x=\varepsilon_1e^{\omega/2}\Big(\tfrac{\omega_x}2\Ad_F\xi_1+\Ad_F[F^{-1}F_x,\xi_1]\Big),
\]
and by \eqref{eq:xi-brackets}, $[F^{-1}F_x,\xi_1]=[A^2\xi_2+A^3e_3,\xi_1]=2A^3\xi_1-A^2e_3$, whence
\[
f^{-1}f_{xx}=\varepsilon_1e^{\omega/2}\Ad_F\big((\tfrac{\omega_x}2+2A^3)\xi_1-A^2e_3\big).
\]
Pairing with $N$ kills the $\xi_1$-term and leaves $Q=-\varepsilon_1e^{\omega/2}A^2$, that is $A^2=-\varepsilon_1e^{-\omega/2}Q$.

\emph{The coefficient $B^1$.} The same computation with $x\leftrightarrow y$ and $\xi_1\leftrightarrow\xi_2$, using $[F^{-1}F_y,\xi_2]=[B^1\xi_1+B^3e_3,\xi_2]=B^1e_3-2B^3\xi_2$, gives
\[
f^{-1}f_{yy}=v_y=\varepsilon_2e^{\omega/2}\Ad_F\big((\tfrac{\omega_y}2-2B^3)\xi_2+B^1e_3\big),
\]
so that $R=\varepsilon_2e^{\omega/2}B^1$, that is $B^1=\varepsilon_2e^{-\omega/2}R$.

\emph{The coefficients $A^1$ and $B^2$.} These come from the two expansions \eqref{eq:two-expansions} together with $\langle f_{xy},N\rangle=\tfrac12\varepsilon e^\omega H$. Expanding first as $v_x+uv$, and using $[F^{-1}F_x,\xi_2]=[A^1\xi_1+A^3e_3,\xi_2]=A^1e_3-2A^3\xi_2$,
\[
v_x=\varepsilon_2e^{\omega/2}\Ad_F\big((\tfrac{\omega_x}2-2A^3)\xi_2+A^1e_3\big),\qquad
uv=\varepsilon_1\varepsilon_2e^\omega\Ad_F(\xi_1\xi_2)=\tfrac\varepsilon2e^\omega\big(\Id+\Ad_Fe_3\big),
\]
by \eqref{eq:xi-products} and $\varepsilon_1\varepsilon_2=\varepsilon$. Pairing with $N$, the $\xi_2$-term drops out and $\Id$ is orthogonal to $\Ad_Fe_3$, leaving
\[
\langle f_{xy},N\rangle=\varepsilon_2e^{\omega/2}A^1+\tfrac\varepsilon2e^\omega,
\]
and comparison with $\tfrac12\varepsilon e^\omega H$, using $\varepsilon/\varepsilon_2=\varepsilon_1$, gives $A^1=\tfrac12\varepsilon_1e^{\omega/2}(H-1)$. Expanding instead as $u_y+vu$, with $[F^{-1}F_y,\xi_1]=[B^2\xi_2+B^3e_3,\xi_1]=2B^3\xi_1-B^2e_3$,
\[
u_y=\varepsilon_1e^{\omega/2}\Ad_F\big((\tfrac{\omega_y}2+2B^3)\xi_1-B^2e_3\big),\qquad
vu=\tfrac\varepsilon2e^\omega\big(\Id-\Ad_Fe_3\big),
\]
so that $\langle f_{xy},N\rangle=-\varepsilon_1e^{\omega/2}B^2-\tfrac\varepsilon2e^\omega$, and comparison with $\tfrac12\varepsilon e^\omega H$ gives $B^2=-\tfrac12\varepsilon_2e^{\omega/2}(H+1)$.

\emph{The coefficients $A^3$ and $B^3$.} These are pinned down by consistency rather than by a further pairing. The two expressions in \eqref{eq:two-expansions} are the same element, so, $\Ad_F$ being a linear isomorphism and $\{\xi_1,\xi_2,e_3\}$ a basis, their $\xi_1$- and $\xi_2$-components must agree separately, and not only their $e_3$-components, already used above. The expansion $v_x+uv$ has no $\xi_1$-term, while $u_y+vu$ has $\xi_1$-term $\varepsilon_1e^{\omega/2}(\tfrac{\omega_y}2+2B^3)$; equating to zero gives $B^3=-\tfrac14\omega_y$. Symmetrically, $u_y+vu$ has no $\xi_2$-term while $v_x+uv$ has $\xi_2$-term $\varepsilon_2e^{\omega/2}(\tfrac{\omega_x}2-2A^3)$, giving $A^3=\tfrac14\omega_x$. This accounts for all six coefficients.
\end{proof}

The one-form $\alpha=F^{-1}F_x\,dx+F^{-1}F_y\,dy$ is the Maurer--Cartan form of $F$, so its integrability condition $d\alpha+\tfrac12[\alpha,\alpha]=0$ holds trivially. Once the coefficients are replaced by their expressions in $(H,Q,R,\omega)$ from Lemma~\ref{lem:frame-coeffs}, however, that condition becomes a system of partial differential equations for the geometric data alone: the Gauss--Codazzi equations of the immersion. The $\xi_1$- and $\xi_2$-components give the Codazzi equations, which involve only first derivatives and express the compatibility of $Q,R$ with $H$; the $e_3$-component gives the Gauss equation.

\begin{proposition}[Gauss--Codazzi; {essentially~\cite[\S5]{Lee06}}]
\label{prop:GC}
The functions $H,Q,R,\omega$ satisfy
\[
H_x=2\varepsilon e^{-\omega}Q_y,\qquad H_y=2\varepsilon e^{-\omega}R_x,\qquad
\varepsilon\,\omega_{xy}+\tfrac12e^{\omega}(H^2-1)-2e^{-\omega}QR=0 .
\]
Consequently the Gaussian curvature of $f$ is
\begin{equation}
\label{eq:K-cmc}
K=-2\varepsilon e^{-\omega}\omega_{xy}=H^2-1-4e^{-2\omega}QR .
\end{equation}
\end{proposition}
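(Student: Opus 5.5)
The plan is to write out the Maurer--Cartan equation of the adapted frame $F$ of Lemma~\ref{lem:adapted-frame} componentwise in the basis $\{\xi_1,\xi_2,e_3\}$, and substitute the coefficients of Lemma~\ref{lem:frame-coeffs}. Put $U=F^{-1}F_x=A^1\xi_1+A^2\xi_2+A^3e_3$ and $V=F^{-1}F_y=B^1\xi_1+B^2\xi_2+B^3e_3$. Then $d\alpha+\tfrac12[\alpha,\alpha]=0$ is equivalent to
\[
V_x-U_y+[U,V]=0 .
\]
Using \eqref{eq:xi-brackets}, the bracket is
\[
[U,V]=(A^1B^2-A^2B^1)e_3-2(A^1B^3-A^3B^1)\xi_1+2(A^2B^3-A^3B^2)\xi_2 .
\]
The three scalar equations are therefore:
\begin{itemize}
\item $\xi_1$: $B^1_x-A^1_y-2A^1B^3+2A^3B^1=0$;
\item $\xi_2$: $B^2_x-A^2_y+2A^2B^3-2A^3B^2=0$;
\item $e_3$: $B^3_x-A^3_y+A^1B^2-A^2B^1=0$.
\end{itemize}

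\emph{The $\xi_1$-component.} Substitute $B^1=\varepsilon_2e^{-\omega/2}R$, $A^3=\tfrac14\omega_x$, $A^1=\tfrac12\varepsilon_1e^{\omega/2}(H-1)$ and $B^3=-\tfrac14\omega_y$. The term produced by differentiating $e^{-\omega/2}$ in $B^1_x$ is $-\tfrac12\omega_x B^1$, and it cancels against $2A^3B^1=\tfrac12\omega_xB^1$. Likewise the derivative of $e^{\omega/2}$ in $A^1_y$ gives $\tfrac12\omega_yA^1$, which cancels against $-2A^1B^3=\tfrac12\omega_yA^1$. What survives is $\varepsilon_2e^{-\omega/2}R_x=\tfrac12\varepsilon_1e^{\omega/2}H_y$. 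Multiplying by $2\varepsilon_1e^{-\omega/2}$ and using $\varepsilon_1\varepsilon_2=\varepsilon$ gives $H_y=2\varepsilon e^{-\omega}R_x$.

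\emph{The $\xi_2$-component.} The same cancellation pattern occurs, with the roles of $x,y$ and of $Q,R$ exchanged, and it yields $H_x=2\varepsilon e^{-\omega}Q_y$. The only thing to watch here is the signs in $A^2$ and $B^2$; I expect this bookkeeping to be the main (if modest) obstacle.

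\emph{The $e_3$-component.} The three pieces are:
\begin{itemize}
\item $B^3_x-A^3_y=-\tfrac12\omega_{xy}$;
\item $A^1B^2=-\tfrac14\varepsilon e^\omega(H^2-1)$;
\item $-A^2B^1=\varepsilon e^{-\omega}QR$.
\end{itemize}
Multiplying the sum by $-2\varepsilon$ gives exactly $\varepsilon\omega_{xy}+\tfrac12e^\omega(H^2-1)-2e^{-\omega}QR=0$.

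For \eqref{eq:K-cmc}, take the first equality to be the standard intrinsic curvature of a Lorentz metric written in null coordinates, $I=\varepsilon e^\omega\,dx\,dy$, namely $K=-2\varepsilon e^{-\omega}\omega_{xy}$. If one prefers not to quote this formula, it can be checked directly from the Christoffel symbols, of which only $\Gamma^x_{xx}=\omega_x$ and $\Gamma^y_{yy}=\omega_y$ are nonzero. The second equality then follows by inserting the Gauss equation just derived, $\varepsilon\omega_{xy}=2e^{-\omega}QR-\tfrac12e^\omega(H^2-1)$. As a cross-check, the result agrees with \eqref{eq:gauss-general}: the ratio $\det II/\det I$ computed from $II=Q\,dx^2+\varepsilon e^\omega H\,dx\,dy+R\,dy^2$ equals $H^2-4e^{-2\omega}QR$, so that $K=H^2-1-4e^{-2\omega}QR$.
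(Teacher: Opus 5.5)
Your proposal is correct and follows the paper's proof. The paper likewise splits the Maurer--Cartan equation into its $\xi_1$-, $\xi_2$- and $e_3$-components, and its substitutions and cancellations are the same as yours. The only variation is in \eqref{eq:K-cmc}: you get $K=-2\varepsilon e^{-\omega}\omega_{xy}$ from the intrinsic curvature of $\varepsilon e^\omega\,dx\,dy$ and then substitute the Gauss equation, whereas the paper first computes $\det II/\det I$ via \eqref{eq:gauss-general} and then converts with the Gauss equation, which is exactly your cross-check.
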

\begin{proof}
The Maurer--Cartan equation for $\alpha$ reads $(F^{-1}F_x)_y-(F^{-1}F_y)_x=[F^{-1}F_x,F^{-1}F_y]$. Expanding the right-hand side with \eqref{eq:xi-brackets},
\[
[F^{-1}F_x,F^{-1}F_y]=(A^1B^2-A^2B^1)e_3+2(A^3B^1-A^1B^3)\xi_1+2(A^2B^3-A^3B^2)\xi_2 ,
\]
so the system splits into
\[
A^1_y-B^1_x=2(A^3B^1-A^1B^3),\qquad A^2_y-B^2_x=2(A^2B^3-A^3B^2),\qquad A^3_y-B^3_x=A^1B^2-A^2B^1 .
\]
Substituting Lemma~\ref{lem:frame-coeffs} into the first equation, the terms proportional to $\omega_y(H-1)$ and to $\omega_xR$ cancel on both sides, leaving $\tfrac12\varepsilon_1e^{\omega/2}H_y=\varepsilon_2e^{-\omega/2}R_x$, that is $H_y=2\varepsilon e^{-\omega}R_x$; the second equation gives $H_x=2\varepsilon e^{-\omega}Q_y$ in the same way. For the third, $A^3_y-B^3_x=\tfrac12\omega_{xy}$ while
\[
A^1B^2-A^2B^1=-\tfrac14\varepsilon e^{\omega}(H^2-1)+\varepsilon e^{-\omega}QR ,
\]
so that $\tfrac12\omega_{xy}=-\tfrac14\varepsilon e^\omega(H^2-1)+\varepsilon e^{-\omega}QR$, which is the stated equation after multiplication by $2\varepsilon$. Finally, $I=\varepsilon e^\omega dx\,dy$ and $II=Q\,dx^2+\varepsilon e^\omega H\,dx\,dy+R\,dy^2$, so by~\eqref{eq:gauss-general}
\[
K+1=\frac{\det II}{\det I}=\frac{QR-\tfrac14e^{2\omega}H^2}{-\tfrac14e^{2\omega}}=H^2-4e^{-2\omega}QR ,
\]
and the Gauss equation just proved turns $H^2-1-4e^{-2\omega}QR$ into $-2\varepsilon e^{-\omega}\omega_{xy}$, which is~\eqref{eq:K-cmc}.
\end{proof}

The quantity $\omega_{xy}$ appearing in~\eqref{eq:K-cmc} controls more than the curvature. The following observation is elementary but it is what ties the two halves of this paper together, and we shall use it in Section~\ref{sec:bridge}.

\begin{proposition}
\label{prop:degenerate-locus}
Let $f:\Omega\to\Hyp$ be a timelike immersion with adapted frame $F$ and Gauss map $\nu=\Ad_Fe_3$. At every point of $\Omega$ the following are equivalent:
\begin{enumerate}\itemsep=0pt
\item[\rm(i)] the Gaussian curvature satisfies $K\ne0$;
\item[\rm(ii)] $\omega_{xy}\ne0$;
\item[\rm(iii)] $\nu$ is an immersion, that is $\nu_x$ and $\nu_y$ are linearly independent.
\end{enumerate}
\end{proposition}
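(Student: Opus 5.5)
The plan is to route all three conditions through a single quantity, the determinant $A^1B^2-A^2B^1$ of the null-direction coefficients of the Maurer--Cartan form of the adapted frame $F$ of Lemma~\ref{lem:adapted-frame}.

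First, the equivalence (i)$\Leftrightarrow$(ii) is read off from~\eqref{eq:K-cmc} in Proposition~\ref{prop:GC}. There $K=-2\varepsilon e^{-\omega}\omega_{xy}$, and the factor $-2\varepsilon e^{-\omega}$ vanishes nowhere, so $K$ and $\omega_{xy}$ vanish at exactly the same points.

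For (iii), I differentiate $\nu=\Ad_Fe_3$ and get $\nu_x=\Ad_F[F^{-1}F_x,e_3]$ and $\nu_y=\Ad_F[F^{-1}F_y,e_3]$. By~\eqref{eq:xi-brackets}, $[e_3,e_3]=0$, $[\xi_1,e_3]=-2\xi_1$ and $[\xi_2,e_3]=2\xi_2$. Writing $F^{-1}F_x$ and $F^{-1}F_y$ as in Lemma~\ref{lem:frame-coeffs}, this gives
\[
\nu_x=2\Ad_F\big(-A^1\xi_1+A^2\xi_2\big),\qquad \nu_y=2\Ad_F\big(-B^1\xi_1+B^2\xi_2\big).
\]
Since $\Ad_F$ is a linear isomorphism and $\xi_1,\xi_2$ are linearly independent, $\nu_x$ and $\nu_y$ are independent exactly when
\[
\det\begin{pmatrix}-A^1&A^2\\-B^1&B^2\end{pmatrix}=-(A^1B^2-A^2B^1)\ne0 .
\]

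The remaining step identifies this determinant with $\omega_{xy}$. I use the $e_3$-component of the Maurer--Cartan equation derived in the proof of Proposition~\ref{prop:GC}, namely $A^3_y-B^3_x=A^1B^2-A^2B^1$. Lemma~\ref{lem:frame-coeffs} gives $A^3=\tfrac14\omega_x$ and $B^3=-\tfrac14\omega_y$, so the left-hand side is $\tfrac12\omega_{xy}$. Hence $\nu$ is an immersion at a point exactly when $\omega_{xy}\ne0$ there, which is (ii)$\Leftrightarrow$(iii).

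As a consistency check, the explicit value $A^1B^2-A^2B^1=-\tfrac14\varepsilon e^{\omega}(H^2-1)+\varepsilon e^{-\omega}QR$ computed in that proof equals $-\tfrac14\varepsilon e^\omega K$, in agreement with~\eqref{eq:K-cmc}. I do not expect a genuine obstacle. The only points needing care are the bracket signs in $\nu_x,\nu_y$, and noting that all three statements are pointwise, so no regularity issue arises.
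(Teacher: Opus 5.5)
Your proposal is correct and follows the paper's proof essentially step for step. You get (i)$\Leftrightarrow$(ii) from \eqref{eq:K-cmc}, and you reduce (iii) to $A^1B^2-A^2B^1\ne0$ by expanding $\nu_x,\nu_y$ in the basis $\Ad_F\xi_1,\Ad_F\xi_2$. You then identify that determinant with $\tfrac12\omega_{xy}$ through the $e_3$-component of the Maurer--Cartan equation, which is exactly what the paper does.
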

\begin{proof}
The equivalence of (i) and (ii) is~\eqref{eq:K-cmc}. For (iii), differentiate $\nu=\Ad_Fe_3$ and use~\eqref{eq:xi-brackets}:
\[
\nu_x=\Ad_F[F^{-1}F_x,e_3]=\Ad_F\big(-2A^1\xi_1+2A^2\xi_2\big),\qquad
\nu_y=\Ad_F\big(-2B^1\xi_1+2B^2\xi_2\big),
\]
both of which lie in the plane spanned by $\Ad_F\xi_1,\Ad_F\xi_2$. Since $\Ad_F$ is an isomorphism, they are linearly independent if and only if
\[
\det\begin{pmatrix}-2A^1&2A^2\\-2B^1&2B^2\end{pmatrix}=-4\big(A^1B^2-A^2B^1\big)\ne0 .
\]
By the $e_3$-component of the Maurer--Cartan equation computed in the proof of Proposition~\ref{prop:GC},
\[
A^1B^2-A^2B^1=A^3_y-B^3_x=\tfrac12\omega_{xy},
\]
so (iii) is equivalent to (ii).
\end{proof}

Proposition~\ref{prop:degenerate-locus} says that the flat points of a timelike immersion are exactly the points at which its Gauss map degenerates. This is worth keeping in mind because the hypothesis that $\nu$ be an immersion is the standing assumption of the whole of Section~\ref{sec:cgc}, where it is shown to be a genuine restriction; here it reappears as the condition $K\ne0$, in a setting where no such hypothesis was imposed.

\subsection{The harmonicity characterization and the loop group formulation}
\label{sec:cmc-loop}

\begin{theorem}
\label{thm:cmc-harmonic}
Let $f:\Omega\to\Hyp$ be a timelike immersion with adapted frame $F$ and Gauss map $\nu=\Ad_Fe_3$, and let
\[
U^\lambda=A^1\lambda\,\xi_1+A^2\xi_2+A^3e_3,\qquad V^\lambda=B^1\xi_1+B^2\lambda^{-1}\xi_2+B^3e_3,\qquad \alpha^\lambda=U^\lambda\,dx+V^\lambda\,dy .
\]
The following are equivalent:
\begin{enumerate}\itemsep=0pt
\item[\rm(1)] $\nu$ is harmonic with respect to the conformal structure of the first fundamental form, that is $[\nu,\nu_{xy}]=0$;
\item[\rm(2)] the mean curvature $H$ is constant;
\item[\rm(3)] $\alpha^\lambda$ satisfies the Maurer--Cartan equation $d\alpha^\lambda+\tfrac12[\alpha^\lambda,\alpha^\lambda]=0$ for every $\lambda$.
\end{enumerate}
\end{theorem}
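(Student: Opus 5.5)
The plan is to prove both (2)$\Leftrightarrow$(3) and (1)$\Leftrightarrow$(2) by reducing each to the pair of conditions $H_x=H_y=0$. The tools are the explicit coefficients of Lemma~\ref{lem:frame-coeffs} and the Codazzi equations of Proposition~\ref{prop:GC}. The whole proof is a bookkeeping exercise in the basis $\{\xi_1,\xi_2,e_3\}$ using~\eqref{eq:xi-brackets}. The one structural input is that the Codazzi equations $H_x=2\varepsilon e^{-\omega}Q_y$ and $H_y=2\varepsilon e^{-\omega}R_x$ convert conditions on $Q_y$ and $R_x$ into conditions on $H$.

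\emph{(2)$\Leftrightarrow$(3).} I would first expand $[U^\lambda,V^\lambda]$ exactly as in the proof of Proposition~\ref{prop:GC}, now keeping track of powers of $\lambda$.
\begin{itemize}
\item The $e_3$-coefficient is $A^1B^2-A^2B^1$, since $\lambda\cdot\lambda^{-1}=1$. So the $e_3$-component of the Maurer--Cartan equation is the Gauss equation and holds for every $\lambda$.
\item The $\xi_1$-component reads $\lambda A^1_y-B^1_x=2A^3B^1-2\lambda A^1B^3$. Requiring this for all $\lambda$ splits it into $A^1_y=-2A^1B^3$ and $B^1_x=-2A^3B^1$.
\item Substituting $A^1=\tfrac12\varepsilon_1e^{\omega/2}(H-1)$ and $B^3=-\tfrac14\omega_y$, the $\omega_y$-terms cancel and the first condition becomes $H_y=0$. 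In the same way the second becomes $R_x=0$, which is again $H_y=0$ by Codazzi.
\item The $\xi_2$-component splits symmetrically, into $B^2_x=-2A^3B^2$ (that is, $H_x=0$) and $A^2_y=2A^2B^3$ (that is, $Q_y=0$, hence $H_x=0$ by Codazzi).
\end{itemize}
Since $\alpha^1=\alpha$ satisfies Maurer--Cartan automatically, (3) holds if and only if $H$ is constant.

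\emph{(1)$\Leftrightarrow$(2).} From the proof of Proposition~\ref{prop:degenerate-locus} we have $\nu_x=\Ad_F W$ with $W=-2A^1\xi_1+2A^2\xi_2$. Differentiating once more,
\[
\nu_{xy}=\Ad_F\big(W_y+[F^{-1}F_y,W]\big),\qquad [\nu,\nu_{xy}]=\Ad_F\big[e_3,\;W_y+[F^{-1}F_y,W]\big].
\]
Since $\operatorname{ad}_{e_3}$ kills $e_3$ and is injective on $\operatorname{span}(\xi_1,\xi_2)$, harmonicity is equivalent to the vanishing of the $\xi_1$- and $\xi_2$-components of $W_y+[F^{-1}F_y,W]$. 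In $[F^{-1}F_y,W]$ the cross terms $[B^1\xi_1,\xi_2]$ and $[B^2\xi_2,\xi_1]$ only produce $e_3$. Hence:
\begin{itemize}
\item the $\xi_1$-component is $-2(A^1_y+2A^1B^3)$;
\item the $\xi_2$-component is $2(A^2_y-2A^2B^3)$.
\end{itemize}
These are exactly the expressions already met in the first part. They vanish if and only if $H_y=0$, respectively $Q_y=0$, that is $H_x=0$ by Codazzi.

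I expect the main obstacle to be sign bookkeeping rather than anything conceptual. One must check that the $\omega$-derivative terms cancel exactly against $-2A^1B^3$ and $2A^2B^3$, using $B^3=-\tfrac14\omega_y$. The verification $A^1_y+2A^1B^3=\tfrac12\varepsilon_1e^{\omega/2}H_y$ deserves to be written out, noting that it needs no assumption such as $H\neq1$. The analogous check for $A^2$ then follows by the $x\leftrightarrow y$ symmetry of Lemma~\ref{lem:frame-coeffs}.
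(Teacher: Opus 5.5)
Your proof is correct and is essentially the paper's argument. The paper differentiates $\nu_y$ in $x$ and obtains the conditions $R_x=0$ and $H_x=0$; you differentiate $\nu_x$ in $y$ and obtain the mirror conditions $H_y=0$ and $Q_y=0$; in both cases Codazzi closes the loop.

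There is one sign slip. The $\lambda^{-1}$-part of the $\xi_2$-component is $B^2_x=2A^3B^2$, because $[e_3,\xi_2]=-2\xi_2$, not $B^2_x=-2A^3B^2$. Only with the correct sign do the $\omega_x$-terms cancel, giving $B^2_x-2A^3B^2=-\tfrac12\varepsilon_2e^{\omega/2}H_x$.
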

\begin{proof}
$(1)\Leftrightarrow(2)$. Since $\nu=\Ad_Fe_3$, differentiating and using \eqref{eq:xi-brackets},
\[
\nu_y=\Ad_F[F^{-1}F_y,e_3]=-2\Ad_F(B^1\xi_1-B^2\xi_2),
\]
and differentiating once more,
\[
\nu_{xy}=-2\Ad_F\big((2A^3B^1+B^1_x)\xi_1+(2A^3B^2-B^2_x)\xi_2-(A^1B^2+A^2B^1)e_3\big).
\]
Because $[\xi_1,e_3]=-2\xi_1$ and $[\xi_2,e_3]=2\xi_2$ while $[e_3,e_3]=0$, the equation $[\nu,\nu_{xy}]=0$ is equivalent to the vanishing of the $\xi_1$- and $\xi_2$-components, that is to
\[
2A^3B^1+B^1_x=0\qquad\text{and}\qquad 2A^3B^2-B^2_x=0 .
\]
Substituting Lemma~\ref{lem:frame-coeffs}, the first reads $\varepsilon_2e^{-\omega/2}(R_x-\tfrac12\omega_xR)+\tfrac12\omega_x\varepsilon_2e^{-\omega/2}R=\varepsilon_2e^{-\omega/2}R_x=0$, i.e.\ $R_x=0$, and the second reads $H_x=0$ in the same way. By the Codazzi equations of Proposition~\ref{prop:GC}, $R_x=0$ is equivalent to $H_y=0$. Hence $\nu$ is harmonic if and only if $H_x=H_y=0$.

$(2)\Leftrightarrow(3)$. Expanding $d\alpha^\lambda+\tfrac12[\alpha^\lambda,\alpha^\lambda]=0$ in powers of $\lambda$ using \eqref{eq:xi-brackets} produces three groups of equations. The coefficient of $\lambda$ gives $A^1_y+2A^1B^3=0$ and the coefficient of $\lambda^{-1}$ gives $B^2_x-2A^3B^2=0$; by Lemma~\ref{lem:frame-coeffs} these are equivalent to $H_y=0$ and $H_x=0$ respectively, since the terms involving $\omega_x,\omega_y$ cancel exactly as above. The $\lambda$-independent part is
\[
(2A^3B^1+B^1_x)\xi_1+(2A^2B^3-A^2_y)\xi_2+(A^1B^2-A^2B^1+B^3_x-A^3_y)e_3=0 .
\]
Its $e_3$-component is the Gauss equation of Proposition~\ref{prop:GC}, hence holds identically; its $\xi_1$- and $\xi_2$-components are $R_x=0$ and $Q_y=0$, which by Codazzi are again $H_y=0$ and $H_x=0$. So the whole system holds for every $\lambda$ if and only if $H$ is constant.
\end{proof}

When $f$ is CMC, Theorem~\ref{thm:cmc-harmonic} allows one, on a simply connected domain, to integrate $\alpha^\lambda$ for every $\lambda$ simultaneously and obtain an \emph{extended frame} $\hat F:\Omega\to G$ with
\[
\hat F^{-1}d\hat F=\alpha^\lambda,\qquad \hat F|_{\lambda=1}=F .
\]
In matrix form, using $\xi_1,\xi_2$ as the off-diagonal basis vectors, the Maurer--Cartan form of an extended frame obtained in this way has the \emph{admissible} shape
\begin{equation}
\label{eq:admissible}
\hat F^{-1}d\hat F=\underbrace{\big(A^2\xi_2+A^1\lambda\,\xi_1\big)}_{\text{off-diagonal}}dx+\underbrace{\big(B^1\xi_1+B^2\lambda^{-1}\xi_2\big)}_{\text{off-diagonal}}dy+\underbrace{\big(A^3dx+B^3dy\big)e_3}_{=:\alpha_0,\ \text{diagonal}},
\end{equation}
where $\alpha_0$ is independent of $\lambda$. Thus the diagonal part $\alpha_0$ carries no $\lambda$-dependence, while the off-diagonal part carries all of it, together with the $\lambda$-independent terms $A^2\xi_2\,dx$ and $B^1\xi_1\,dy$; by Lemma~\ref{lem:frame-coeffs}, $\alpha_0$ has the generally nonzero coefficients $A^3=\tfrac14\omega_x$ and $B^3=-\tfrac14\omega_y$. We call the extended frame \emph{regular} if
\[
A^1\ne0\qquad\text{and}\qquad B^2\ne0
\]
everywhere; by Lemma~\ref{lem:frame-coeffs} this says $H\ne\pm1$, and it is exactly the condition under which the construction of the next subsection produces an immersion. By Proposition~\ref{prop:dpw}, every admissible extended frame arises from a potential pair through a Birkhoff factorization, so the construction of timelike CMC surfaces in $\Hyp$ is reduced to the choice of a potential pair whose factorization yields a frame of the shape~\eqref{eq:admissible}.

\subsection{The Sym--Bobenko formula}
\label{sec:cmc-symbob}

\begin{proposition}[{after~\cite[Thm.~4(2)]{Lee06}; cf.~\cite[\S3.3]{BIK14}}]
\label{prop:sym-bobenko}
Let $\hat F:\Omega\to G$ be a regular admissible extended frame and write $F=\hat F|_{\lambda=1}$. For every $\mu\ne0,1$ the map
\[
f^\mu:=\hat F|_{\lambda=\mu}\,\hat F|_{\lambda=1}^{-1}:\Omega\longrightarrow\Hyp
\]
is a timelike immersion with unit normal $N^\mu=\hat F|_{\lambda=\mu}\,e_3\,\hat F|_{\lambda=1}^{-1}$, Gauss map $\nu=\Ad_Fe_3$, and constant mean curvature
\[
H=\frac{\mu+1}{\mu-1} .
\]
Moreover
\[
(f^\mu)^{-1}f^\mu_x=A^1(\mu-1)\,\Ad_F\xi_1,\qquad (f^\mu)^{-1}f^\mu_y=B^2(\mu^{-1}-1)\,\Ad_F\xi_2 .
\]
\end{proposition}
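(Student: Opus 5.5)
The plan is a direct computation with the Maurer--Cartan forms. Write $\hat F_\mu=\hat F|_{\lambda=\mu}$ and $F=\hat F_1$, so $f^\mu=\hat F_\mu F^{-1}$. Differentiating gives
\[
(f^\mu)^{-1}df^\mu=F\big(\hat F_\mu^{-1}d\hat F_\mu-F^{-1}dF\big)F^{-1}=\Ad_F\big(\alpha^\mu-\alpha^1\big).
\]
By~\eqref{eq:admissible}, all $\lambda$-independent terms ($A^2\xi_2$, $A^3e_3$ in $dx$; $B^1\xi_1$, $B^3e_3$ in $dy$) cancel in $\alpha^\mu-\alpha^1$. What remains is $A^1(\mu-1)\xi_1\,dx+B^2(\mu^{-1}-1)\xi_2\,dy$, which are exactly the two displayed formulas. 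Regularity ($A^1,B^2\ne0$) together with $\mu\ne0,1$ makes both coefficients nonvanishing. The vectors $\Ad_F\xi_1$ and $\Ad_F\xi_2$ are independent null vectors with $\langle\Ad_F\xi_1,\Ad_F\xi_2\rangle=\tfrac12$. Hence $f^\mu$ is an immersion, $(x,y)$ are null coordinates for its first fundamental form, and the induced metric is Lorentzian, so $f^\mu$ is timelike.

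Next come the normal and the Gauss map. Since $e_3\perp\xi_1,\xi_2$, the element $\nu=\Ad_Fe_3$ is a unit vector orthogonal to both left-translated tangent vectors. Therefore $N^\mu:=f^\mu\nu=\hat F_\mu F^{-1}Fe_3F^{-1}=\hat F_\mu e_3F^{-1}$ is a unit normal, and its Gauss map $(f^\mu)^{-1}N^\mu=\nu$ does not depend on $\mu$.

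For the mean curvature I avoid recomputing second derivatives from scratch and instead reuse Lemma~\ref{lem:frame-coeffs} applied to $f^\mu$ itself. Set $\kappa_1=A^1(\mu-1)$ and $\kappa_2=B^2(\mu^{-1}-1)$. The tangent fields then have the form of the proof of Lemma~\ref{lem:adapted-frame}, with frame $F$, and in general $|\kappa_1|\ne|\kappa_2|$. The lemma does not require equal scalings for the part computing $\langle f_{xy},N\rangle$, so I redo that computation directly. Write $u=\kappa_1\Ad_F\xi_1$ and $v=\kappa_2\Ad_F\xi_2$. Then $v_x=\Ad_F\big((\kappa_2)_x\xi_2+\kappa_2[F^{-1}F_x,\xi_2]\big)$, whose $e_3$-component is $\kappa_2A^1$, by the bracket computation already done in Lemma~\ref{lem:frame-coeffs}. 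Also $uv=\kappa_1\kappa_2\Ad_F(\xi_1\xi_2)$, whose $e_3$-part is $\tfrac12\kappa_1\kappa_2$. Hence
\[
\langle f^\mu_{xy},N^\mu\rangle=\kappa_2A^1+\tfrac12\kappa_1\kappa_2,\qquad 2\langle f^\mu_x,f^\mu_y\rangle=\kappa_1\kappa_2,
\]
and the mean curvature with the paper's normalization, $H=2\langle f_{xy},N\rangle/(2\langle f_x,f_y\rangle)$ with $\varepsilon e^\omega=\kappa_1\kappa_2$, is
\[
H=\frac{2(\kappa_2A^1+\tfrac12\kappa_1\kappa_2)}{\kappa_1\kappa_2}=\frac{2}{\mu-1}+1=\frac{\mu+1}{\mu-1}.
\]
This is constant.

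The main obstacle is bookkeeping of conventions rather than substance. First, the definition $H=2\varepsilon e^{-\omega}\langle f_{xy},N\rangle$ was stated for the adapted frame; because $H=-\tfrac12\operatorname{tr}S$ is frame independent, the formula above in terms of $\kappa_1\kappa_2$ is legitimate. Second, if a cross-check is wanted, I can use the gauge $F\exp(\tau e_3)$ from Lemma~\ref{lem:adapted-frame} to equalize the scalings; this leaves $\nu$ unchanged and returns us to the adapted setting. Third, I should confirm that the sign and orientation of $N^\mu$ match the convention yielding $H$ rather than $-H$. The alternative expansion $u_y+vu$ gives $-\kappa_1B^2-\tfrac12\kappa_1\kappa_2$. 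Its agreement with the first expansion is precisely the consistency check $\kappa_2A^1+\kappa_1B^2+\kappa_1\kappa_2=0$. Substituting $\kappa_i$, this reduces to $A^1B^2(\mu-1)(\mu^{-1}-1+1-\mu^{-1}\cdot 1)$-type identities that I will verify explicitly.
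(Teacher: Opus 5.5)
Your proposal is correct and follows the paper's proof essentially step for step. The difference $\Ad_F(\alpha^\mu-\alpha^1)$ of Maurer--Cartan forms gives the two tangent formulas, and nondegeneracy and the normal come from $\langle\xi_1,\xi_2\rangle=\tfrac12$ and $e_3\perp\xi_1,\xi_2$. Then $H$ is obtained by pairing the expansion $v_x+uv$ with $N^\mu$, with $u_y+vu$ as the cross-check.

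Your orientation worry is unnecessary. $N^\mu$ is prescribed in the statement and your computation already uses it. Also, the agreement of the two expansions holds because $(\mu^{-1}-1)+(\mu-1)+(\mu-1)(\mu^{-1}-1)=0$, but it is automatic from equality of mixed partials, so it does not test the sign of $H$.
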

\begin{proof}
Write $f=f^\mu$, $F=\hat F|_{\lambda=1}$ and $\mathcal G=\hat F|_{\lambda=\mu}$, so that $f=\mathcal GF^{-1}$ and
\[
u:=f^{-1}f_x=F\mathcal G^{-1}\big(\mathcal G_xF^{-1}-\mathcal GF^{-1}F_xF^{-1}\big)=\Ad_F\big(\mathcal G^{-1}\mathcal G_x-F^{-1}F_x\big).
\]
By \eqref{eq:admissible}, $\hat F^{-1}\hat F_x=A^1\lambda\,\xi_1+A^2\xi_2+A^3e_3$ where $A^1,A^2,A^3$ are functions of $(x,y)$ alone, the entire $\lambda$-dependence being the displayed factor. Evaluating at $\lambda=\mu$ and at $\lambda=1$ and subtracting therefore cancels the $A^2\xi_2$ and $A^3e_3$ terms, leaving $\mathcal G^{-1}\mathcal G_x-F^{-1}F_x=A^1(\mu-1)\xi_1$, so that
\[
u=p\,\Ad_F\xi_1,\qquad p:=A^1(\mu-1),
\]
and the same computation on the $y$-side gives
\[
v:=f^{-1}f_y=q\,\Ad_F\xi_2,\qquad q:=B^2(\mu^{-1}-1).
\]
This proves the two displayed frame identities.

\emph{Nondegeneracy and normal.} Since $\xi_1,\xi_2$ are null and $\Ad_F$ is an isometry, $\langle f_x,f_x\rangle=p^2\langle\xi_1,\xi_1\rangle=0=\langle f_y,f_y\rangle$, so $(x,y)$ are null coordinates for $f$ as well, and
\[
\langle f_x,f_y\rangle=pq\,\langle\xi_1,\xi_2\rangle=\tfrac12pq=\tfrac12A^1B^2(\mu-1)(\mu^{-1}-1),
\]
which is nonzero because $\mu\ne0,1$ and $A^1,B^2\ne0$ by regularity; hence $f$ is a nondegenerate timelike immersion. Setting $N:=f\Ad_Fe_3=\mathcal Ge_3F^{-1}$ we get $\langle N,f_x\rangle=p\langle e_3,\xi_1\rangle=0$, $\langle N,f_y\rangle=q\langle e_3,\xi_2\rangle=0$ and $\langle N,N\rangle=\langle e_3,e_3\rangle=1$, so $N$ is a unit normal for $f$ and the Gauss map is $f^{-1}N=\Ad_Fe_3$, unchanged from that of $F$.

\emph{Mean curvature.} With $2\langle f_x,f_y\rangle=pq$ playing the role of $\varepsilon e^{\omega}$, the definition of $H$ reads $H=2\langle f_{xy},N\rangle/(pq)$, so it suffices to compute $\langle f_{xy},N\rangle$. Expanding as in \eqref{eq:two-expansions},
\[
v_x=(\partial_xq)\Ad_F\xi_2+q\,\Ad_F[F^{-1}F_x,\xi_2]=\Ad_F\big((\partial_xq-2qA^3)\xi_2+qA^1e_3\big),
\]
\[
uv=pq\,\Ad_F(\xi_1\xi_2)=\tfrac{pq}2\big(\Id+\Ad_Fe_3\big),
\]
and pairing $f^{-1}f_{xy}=v_x+uv$ with $N$, the $\xi_2$-term and the $\Id$-term drop out, leaving
\[
\langle f_{xy},N\rangle=qA^1+\tfrac{pq}2=qA^1\Big(1+\frac p{2A^1}\Big)=qA^1\Big(1+\frac{\mu-1}2\Big)=qA^1\cdot\frac{\mu+1}2 .
\]
Dividing by $\tfrac12pq$ and cancelling $q$,
\[
H=\frac{2\langle f_{xy},N\rangle}{pq}=\frac{A^1(\mu+1)}{p}=\frac{A^1(\mu+1)}{A^1(\mu-1)}=\frac{\mu+1}{\mu-1},
\]
a constant. Expanding instead as $u_y+vu$ gives $\langle f_{xy},N\rangle=-pB^2-\tfrac{pq}2$ and hence, independently, the same value of $H$, as it must.
\end{proof}

\begin{remark}
\label{rem:H-range}
The sign of $\mu$ decides the range of the mean curvature. As $\mu$ ranges over $(0,1)\cup(1,\infty)$ the mean curvature $H=(\mu+1)/(\mu-1)$ ranges over $(-\infty,-1)\cup(1,\infty)$, so $|H|>1$; as $\mu$ ranges over $(-\infty,0)$ it ranges over $(-1,1)$, so $|H|<1$. Both halves occur in the associated family of a single admissible frame, since Proposition~\ref{prop:sym-bobenko} requires only $\mu\ne0,1$; the value $H$ of the immersion the data came from is recovered at $\mu=(H+1)/(H-1)$, which is negative exactly when $|H|<1$. What is excluded is the pair of values $H=\pm1$ themselves, which are the degenerate limits $\mu\to\infty$ and $\mu\to0$. The excluded value $\mu=1$ is excluded for the trivial reason that $f^1$ is constant.
\end{remark}

\subsection{Examples}
\label{sec:cmc-example}

We give two examples in closed form. The first is the simplest instance of the construction: its coefficients are constant, the extended frame is a single matrix exponential, and its whole associated family is flat, which by Proposition~\ref{prop:constant-potential-flat} is forced. The second has nonconstant coefficients and nonconstant Gaussian curvature, and its extended frame is nevertheless explicit.

Take $\omega\equiv0$, $\varepsilon=\varepsilon_1=\varepsilon_2=1$ and the constants
\[
H=0,\qquad Q=\tfrac12,\qquad R=-\tfrac12 .
\]
All derivatives in the Gauss--Codazzi system of Proposition~\ref{prop:GC} vanish, and the Gauss equation reduces to $\tfrac12(H^2-1)-2QR=-\tfrac12+\tfrac12=0$, so the data are admissible. By Lemma~\ref{lem:frame-coeffs},
\[
A^1=A^2=B^1=B^2=-\tfrac12,\qquad A^3=B^3=0,
\]
so the frame is regular. Since $A^3=B^3=0$, the matrices
\[
U^\lambda=-\tfrac12\lambda\,\xi_1-\tfrac12\xi_2,\qquad V^\lambda=-\tfrac12\xi_1-\tfrac1{2\lambda}\xi_2
\]
are constant in $(x,y)$, and they commute: their bracket has only an $e_3$-component, equal to $(A^1B^2-A^2B^1)e_3=(\tfrac14-\tfrac14)e_3=0$. The extended frame therefore integrates in closed form as $\hat F(x,y,\lambda)=\exp(xU^\lambda+yV^\lambda)$. Writing $M=xU^\lambda+yV^\lambda=p\,\xi_1+q\,\xi_2$ with
\[
p=-\tfrac12(\lambda x+y),\qquad q=-\tfrac1{2\lambda}(\lambda x+y),
\]
we have $M^2=pq\,\Id$ by \eqref{eq:xi-products}, so with $k:=\dfrac{\lambda x+y}{2\sqrt\lambda}$ for $\lambda>0$ (so that $pq=k^2$),
\[
\hat F(x,y,\lambda)=\cosh(k)\,\Id+\frac{\sinh(k)}k\,M=\begin{pmatrix}\cosh k & -\lambda^{-1/2}\sinh k\\ -\lambda^{1/2}\sinh k & \cosh k\end{pmatrix}\in\SLR .
\]
At $\lambda=1$ this is $F(x,y)=\begin{pmatrix}\cosh t&-\sinh t\\-\sinh t&\cosh t\end{pmatrix}$ with $t=\tfrac12(x+y)$, and Proposition~\ref{prop:sym-bobenko} produces, for each $\mu\ne0,1$, the timelike CMC immersion
\[
f^\mu=\hat F(\cdot,\cdot,\mu)F^{-1},\qquad H=\frac{\mu+1}{\mu-1}.
\]
For $\mu<0$ the formula for $\hat F$ still holds, $k$ being then imaginary and $\cosh,\sinh$ becoming $\cos,\sin$; the surface the data came from, with $H=0$, is recovered at $\mu=-1$, in accordance with Remark~\ref{rem:H-range}.

The entire associated family is flat. Indeed, computing the invariants of $f^\mu$ from the proof of Proposition~\ref{prop:sym-bobenko} --- where $p=A^1(\mu-1)$ and $q=B^2(\mu^{-1}-1)$ were introduced, $A^i,B^i$ being the frame coefficients of Lemma~\ref{lem:frame-coeffs} --- one finds
\[
Q^\mu=-pA^2=-\tfrac14(\mu-1),\qquad R^\mu=qB^1=\tfrac14(\mu^{-1}-1),\qquad \varepsilon^\mu e^{\omega^\mu}=pq=-\frac{(\mu-1)^2}{4\mu},
\]
where $Q^\mu,R^\mu,\omega^\mu,\varepsilon^\mu$ denote the invariants of $f^\mu$ itself, in the sense of \S\ref{sec:cmc-frame} applied to that immersion, so that by \eqref{eq:K-cmc},
\[
K^\mu=(H^\mu)^2-1-4e^{-2\omega^\mu}Q^\mu R^\mu=\frac{4\mu}{(\mu-1)^2}-\frac{4\mu}{(\mu-1)^2}=0 .
\]
Consequently this family lies exactly on the degenerate locus $K=0$ excluded in Theorem~\ref{thm:bridge} below, and by Proposition~\ref{prop:degenerate-locus} its Gauss map is nowhere an immersion. It does, on the other hand, satisfy the hypothesis of Corollary~\ref{cor:bridge-cmc} at every point, since $A^2B^1=\tfrac14\ne0$; see Remark~\ref{rem:three-values}.

This is not an accident of the constants chosen, and it explains why examples to which Section~\ref{sec:bridge} applies require more work.

\begin{proposition}
\label{prop:constant-potential-flat}
If the coefficients $A^1,\dots,B^3$ of an adapted frame are all constant, the immersion is flat. In particular no extended frame with constant coefficients, such as the one above, satisfies the hypothesis of Theorem~\ref{thm:bridge}.
\end{proposition}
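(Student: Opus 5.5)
If all six coefficients $A^1,\dots,B^3$ are constant, then by Lemma~\ref{lem:frame-coeffs} we have $A^3=\tfrac14\omega_x$ and $B^3=-\tfrac14\omega_y$, both constant. Differentiating gives
\[
\omega_{xy}=4A^3_y=0 .
\]
The equality $K=-2\varepsilon e^{-\omega}\omega_{xy}$ of~\eqref{eq:K-cmc} then yields $K\equiv0$, so the immersion is flat. By Proposition~\ref{prop:degenerate-locus} its Gauss map is nowhere an immersion.

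An independent route, which avoids $\omega$ altogether, goes through the $e_3$-component of the Maurer--Cartan equation in the proof of Proposition~\ref{prop:GC}:
\[
A^1B^2-A^2B^1=A^3_y-B^3_x .
\]
Constancy of the coefficients makes the right-hand side vanish. The equality $A^1B^2-A^2B^1=\tfrac12\omega_{xy}$ from the proof of Proposition~\ref{prop:degenerate-locus} then gives $\omega_{xy}=0$, hence $K=0$ again. I would record this version too, because it applies verbatim to the coefficients of an extended frame: there the $\lambda$-dependence sits only in the factors $\lambda$ and $\lambda^{-1}$, and the $e_3$-component $A^1B^2-A^2B^1$ is $\lambda$-independent.

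For the ``in particular'' clause, I note that the surfaces produced in Theorem~\ref{thm:bridge} are built from a CMC frame and require non-flatness of $f^\mu$. Proposition~\ref{prop:sym-bobenko} gives
\[
(f^\mu)^{-1}f^\mu_x=A^1(\mu-1)\Ad_F\xi_1,\qquad (f^\mu)^{-1}f^\mu_y=B^2(\mu^{-1}-1)\Ad_F\xi_2 .
\]
So the adapted frame of $f^\mu$ is $F$ itself, up to the constant rescaling $\exp(\tau e_3)$ of Lemma~\ref{lem:adapted-frame}, with $\tau$ constant since $p$ and $q$ are constant. Its coefficients are therefore again constant (conjugation by a constant diagonal element only rescales them), and the first part shows that every $f^\mu$ is flat. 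The same conclusion follows from $\varepsilon^\mu e^{\omega^\mu}=pq$ being constant, which forces $\omega^\mu_{xy}=0$.

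The only point needing care is this transfer to $f^\mu$, namely checking that the invariant $\omega^\mu$ of $f^\mu$ is constant. Beyond that the argument is a one-line consequence of~\eqref{eq:K-cmc}.
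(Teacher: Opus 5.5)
Your first paragraph is the paper's proof: constancy of $A^3=\tfrac14\omega_x$ and $B^3=-\tfrac14\omega_y$ forces $\omega_{xy}=0$, and then $K=0$ by \eqref{eq:K-cmc}. For the ``in particular'' clause, the detour through the adapted frame of $f^\mu$ is correct but unnecessary: your second route already gives $A^1B^2-A^2B^1=A^3_y-B^3_x=0$, so $\nu=\Ad_Fe_3$ is nowhere an immersion, and that is exactly the failure of the hypothesis of Theorem~\ref{thm:bridge}.
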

\begin{proof}
If $A^3$ and $B^3$ are constant then $\omega_x$ and $\omega_y$ are constant by Lemma~\ref{lem:frame-coeffs}, so $\omega$ is affine and $\omega_{xy}=0$; by~\eqref{eq:K-cmc}, $K=0$.
\end{proof}

Examples to which Section~\ref{sec:bridge} applies therefore need nonconstant coefficients. The following one is nevertheless explicit.

\begin{example}[an explicit family with nonconstant curvature]
\label{ex:sinh-gordon}
Take $\varepsilon=\varepsilon_1=\varepsilon_2=1$ and constants $H>1$ and $Q,R>0$. The Codazzi equations hold automatically, since $H,Q,R$ are constant, and the Gauss equation reads
\[
\omega_{xy}=-\tfrac12(H^2-1)\,e^{\omega}+2QR\,e^{-\omega} .
\]
Put $e^{\omega_0}=2\sqrt{QR/(H^2-1)}$ and $k=\big(4QR(H^2-1)\big)^{1/4}$. Then $\omega=\omega_0+v$ satisfies the Gauss equation exactly when $v_{xy}=-k^2\sinh v$. A solution depending only on $x-y$, $v=V(k(x-y))$, must satisfy $V''=\sinh V$, and the separatrix solution $V(\varsigma)=4\operatorname{artanh}(e^\varsigma)$, $\varsigma<0$, is elementary: $V'=-2/\sinh\varsigma$ and $V''=2\cosh\varsigma/\sinh^2\varsigma=\sinh V$. Thus on the half-plane $\{x<y\}$, writing $T=e^{k(x-y)}\in(0,1)$,
\begin{equation}
\label{eq:sg-solution}
e^{\omega}=2\sqrt{\frac{QR}{H^2-1}}\,\Big(\frac{1+T}{1-T}\Big)^2
\end{equation}
solves the Gauss equation. By~\eqref{eq:K-cmc}, the Gaussian curvature of the surface determined by these data is
\[
K=H^2-1-4e^{-2\omega}QR=(H^2-1)\Big(1-\Big(\frac{1-T}{1+T}\Big)^4\Big)=\frac{8(H^2-1)\,T(1+T^2)}{(1+T)^4} .
\]
It takes values in $(0,H^2-1)$ and is not constant; in particular $\omega_{xy}\ne0$ on all of $\{x<y\}$.

By Theorem~\ref{thm:cmc-harmonic} the data determine an extended frame $\hat F$, unique up to left translation once an initial value is fixed. By Lemma~\ref{lem:frame-coeffs} its coefficients are
\[
A^1=\tfrac12e^{\omega/2}(H-1),\quad A^2=-e^{-\omega/2}Q,\quad A^3=\tfrac14\omega_x,
\]
\[
B^1=e^{-\omega/2}R,\quad B^2=-\tfrac12e^{\omega/2}(H+1),\quad B^3=-\tfrac14\omega_y ,
\]
all elementary functions of $T$. They are nowhere zero, so the frame is regular. Moreover $A^1B^2-A^2B^1=\tfrac12\omega_{xy}\ne0$, so the Gauss map $\nu=\Ad_Fe_3$ is an immersion on all of $\{x<y\}$. The frame itself can be written in closed form. Its coefficients depend on $x-y$ alone, so $\hat F=\exp\big((x+y)Y\big)F_0(x-y)$ with $Y$ constant, and $F_0$ solves a linear ordinary differential equation in $u=x-y$. That equation has solutions of the form $e^{\pm i\varpi u}(\alpha\tau+\beta)/\sqrt\tau$, where $\tau=(1+T)/(1-T)$. Explicitly, for $\lambda>0$ put
\[
\kappa=\frac{\sqrt{2\lambda(H-1)Q}}{k},\qquad \varpi=\frac{k(\kappa^2-1)}{4\kappa},\qquad \hat\varpi=\frac{k(\kappa^2+1)}{4\kappa},\qquad \sigma_0=\frac{\kappa k\,e^{\omega_0/2}}{2Q} .
\]
Then, up to left translation,
\[
\hat F=\frac{\exp\big(\hat\varpi(x+y)e_1\big)}{\sqrt{\sigma_0(1+\kappa^2)\,\tau}}
\begin{pmatrix}\sigma_0\big(\kappa\tau\cos\varpi u+\sin\varpi u\big)&\tau\cos\varpi u-\kappa\sin\varpi u\\ \sigma_0\big(\kappa\tau\sin\varpi u-\cos\varpi u\big)&\tau\sin\varpi u+\kappa\cos\varpi u\end{pmatrix},
\]
with $\exp(ce_1)=\cos c\,\Id+\sin c\,e_1$. A direct computation shows that $\det\hat F=1$ and that $\hat F^{-1}d\hat F$ is the one-form~\eqref{eq:admissible} with the coefficients above.

By Proposition~\ref{prop:sym-bobenko}, each $f^\mu=\hat F|_{\lambda=\mu}\hat F|_{\lambda=1}^{-1}$ with $\mu>0$, $\mu\ne1$, is then an explicit timelike immersion of constant mean curvature $(\mu+1)/(\mu-1)$. Its Gaussian curvature is
\[
K^\mu=(H^\mu)^2-1+\frac{4\big(H^2-1-K\big)}{(\mu-1)(\mu^{-1}-1)(H^2-1)}
=\frac{4\mu}{(\mu-1)^2\,(H^2-1)}\,K ,
\]
so that $K^\mu$ is not constant. The second form of this identity follows from the first by $(H^\mu)^2-1=4\mu/(\mu-1)^2$ and $(\mu-1)(\mu^{-1}-1)=-(\mu-1)^2/\mu$. It holds for every admissible frame, not only for these data. The factor in front of $K$ is positive for every $\mu>0$, so $K^\mu$ and $K$ have the same zeros and the same sign throughout the associated family, which gives the $\mu$-independence used in Remark~\ref{rem:bridge-parallel} without passing through Proposition~\ref{prop:degenerate-locus}. Taking $R\equiv0$ instead reduces the Gauss equation to Liouville's equation and gives examples in which every $f^\mu$ is isoparametric; the family above avoids this.
\end{example}

\section{Timelike surfaces of constant Gaussian curvature}
\label{sec:cgc}

We now change the conformal structure on the domain. Let $f:\Omega\to\Hyp$ be a timelike immersion whose extrinsic curvature $K+1$ is positive. Then $\det II=(K+1)\det I<0$ by~\eqref{eq:gauss-general}, since $\det I<0$ for a timelike immersion, so the second fundamental form is again a Lorentzian metric on $\Omega$; this is the metric with respect to which harmonicity of the Gauss map is now measured. Its null coordinates are the asymptotic coordinates of $f$, and in them the harmonic map equation is again $[\nu,\nu_{xy}]=0$.

The two hypotheses just made --- positive extrinsic curvature, and, below, that $\nu$ be an immersion --- are genuine restrictions rather than technical conveniences. By Proposition~\ref{prop:degenerate-locus} the second one excludes exactly the flat points of $f$, and flat timelike surfaces exist in abundance. Here is one. Consider
\[
f(x,y)=(c_1\cosh(x+y),\,c_2\sinh(x-y),\,c_1\sinh(x+y),\,c_2\cosh(x-y)),\qquad c_1^2-c_2^2=1,
\]
with normal $N=(c_2\cosh(x+y),c_1\sinh(x-y),c_2\sinh(x+y),c_1\cosh(x-y))$. A direct computation gives
\[
I=dx^2+2(c_1^2+c_2^2)\,dx\,dy+dy^2,\qquad II=-4c_1c_2\,dx\,dy,
\]
so $(x,y)$ are asymptotic coordinates, while in the identification~\eqref{eq:identification} the Gauss map becomes
\[
\nu=\begin{pmatrix}-\cosh 2x&-\sinh 2x\\ \sinh 2x&\cosh 2x\end{pmatrix} .
\]
This surface is flat. Indeed $c_1^2-c_2^2=1$ gives $(c_1^2+c_2^2)^2-1=4c_1^2c_2^2$, so
\[
\det I=1-(c_1^2+c_2^2)^2=-4c_1^2c_2^2=-\big(2c_1c_2\big)^2=\det II ,
\]
whence $K+1=\det II/\det I=1$ and $K\equiv0$. Consistently with Proposition~\ref{prop:degenerate-locus}, its Gauss map is not an immersion: $\nu$ satisfies $[\nu,\nu_{xy}]=0$, so it is harmonic, but $\nu_y\equiv0$. The example therefore shows both that harmonicity of the Gauss map alone does not force $\nu$ to be an immersion, so that hypothesis has to be imposed separately, and that the exclusion of flat surfaces is not vacuous.

\subsection{The characterization of constant Gaussian curvature}
\label{sec:cgc-char}

Assume from now on that $\nu$ is an immersion. Then $\nu_x,\nu_y$ are linearly independent and both orthogonal to $\nu$, since $\langle\nu,\nu\rangle$ is constant, so they form a basis of the Lorentzian plane $\nu^\perp$. By Proposition~\ref{prop:bracket-identities}(i), $[\nu_x,\nu_y]$ is orthogonal to both, hence proportional to $\nu$:
\begin{equation}
\label{eq:kappa}
[\nu_x,\nu_y]=\kappa\,\nu,\qquad \kappa=\langle[\nu_x,\nu_y],\nu\rangle .
\end{equation}
Here $\kappa\ne0$: an element of $\sltwo$ has $1$-dimensional centralizer, so $[\nu_x,\nu_y]=0$ would force $\nu_x$ and $\nu_y$ to be proportional. The tangent plane of $f$, left-translated, is $\nu^\perp$, and by Proposition~\ref{prop:bracket-identities}(v) the map $\operatorname{ad}_\nu$ is an isomorphism of $\nu^\perp$, so $\{[\nu,\nu_x],[\nu,\nu_y]\}$ is again a basis of it. We may therefore write
\begin{equation}
\label{eq:cgc-ansatz-general}
f^{-1}f_x=\tfrac{r}2[\nu,\nu_x]+c\,[\nu,\nu_y],\qquad
f^{-1}f_y=d\,[\nu,\nu_x]+\tfrac{s}2[\nu,\nu_y]
\end{equation}
for functions $r,s,c,d$ on $\Omega$.

\begin{lemma}
\label{lem:cgc-diagonal}
In asymptotic coordinates, $c=d=0$; that is,
\begin{equation}
\label{eq:cgc-ansatz}
f^{-1}f_x=\tfrac{r}2[\nu,\nu_x],\qquad f^{-1}f_y=\tfrac{s}2[\nu,\nu_y] .
\end{equation}
\end{lemma}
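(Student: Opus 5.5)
The plan is to translate "asymptotic coordinates" into the two scalar conditions $L=\mathcal N=0$ and then read off $c$ and $d$ by pairing the ansatz~\eqref{eq:cgc-ansatz-general} with $\nu_x$ and $\nu_y$. Since $(x,y)$ are null coordinates for $II$, the diagonal coefficients of the second fundamental form vanish. By~\eqref{eq:naive-second-derivatives} and the left-translated formulas of \S\ref{sec:prelim-gauss}, this means $L=\langle u_x,\nu\rangle=0$ and $\mathcal N=\langle v_y,\nu\rangle=0$, where $u=f^{-1}f_x$ and $v=f^{-1}f_y$.

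First I rewrite these two conditions in terms of first derivatives of $\nu$. The tangent vectors $u$ and $v$ lie in $\nu^\perp$, so $\langle u,\nu\rangle\equiv0$ and $\langle v,\nu\rangle\equiv0$. Differentiating the first in $x$ and the second in $y$ gives
\[
L=-\langle u,\nu_x\rangle,\qquad \mathcal N=-\langle v,\nu_y\rangle .
\]

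Next I substitute~\eqref{eq:cgc-ansatz-general}. Two facts do the work.

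First, Proposition~\ref{prop:bracket-identities}(i) gives $\langle[\nu,\nu_x],\nu_x\rangle=0$ and $\langle[\nu,\nu_y],\nu_y\rangle=0$. So the $r$-term drops out of $L$ and the $s$-term drops out of $\mathcal N$.

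Second, the form $\langle X,Y\rangle=\tfrac12\operatorname{tr}(XY)$ is ad-invariant, i.e. $\langle[X,Y],Z\rangle=\langle X,[Y,Z]\rangle$ by cyclicity of the trace. Using this together with~\eqref{eq:kappa},
\[
\langle[\nu,\nu_y],\nu_x\rangle=\langle\nu,[\nu_y,\nu_x]\rangle=-\kappa,\qquad
\langle[\nu,\nu_x],\nu_y\rangle=\langle\nu,[\nu_x,\nu_y]\rangle=\kappa .
\]
Here $\langle\nu,\nu\rangle=1$ has been used in the last step. Therefore
\[
L=c\,\kappa,\qquad \mathcal N=-d\,\kappa .
\]

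Finally, $\kappa\ne0$ has already been established just after~\eqref{eq:kappa}, using that $\nu$ is an immersion and that centralizers in $\sltwo$ are one-dimensional. Hence $L=\mathcal N=0$ forces $c=d=0$, which is~\eqref{eq:cgc-ansatz}.

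I expect no serious obstacle. The only point needing care is the first step, namely that $L$ and $\mathcal N$ as defined in \S\ref{sec:prelim-gauss} are exactly the diagonal coefficients of $II$ whose vanishing characterizes asymptotic coordinates. This is precisely what~\eqref{eq:naive-second-derivatives} records. The overall signs in front of $c\kappa$ and $d\kappa$ are irrelevant to the conclusion.
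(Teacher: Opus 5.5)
Your proof is correct and has the same structure as the paper's: both reduce asymptoticity to $L=\mathcal N=0$, obtain $L=c\kappa$ and $\mathcal N=-d\kappa$, and conclude from $\kappa\ne0$. The only difference is how you evaluate $L$: you move the derivative onto $\nu$ via $L=-\langle u,\nu_x\rangle$ and use ad-invariance of the trace form, whereas the paper differentiates the ansatz directly and discards every term of the form $[\nu,\,\cdot\,]$ by Proposition~\ref{prop:bracket-identities}(i), leaving only $c[\nu_x,\nu_y]$.
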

\begin{proof}
By definition, $(x,y)$ asymptotic means that $II$ is off-diagonal, $L=\mathcal N=0$. With $u=f^{-1}f_x$ as in~\eqref{eq:cgc-ansatz-general},
\[
u_x=\tfrac{r_x}2[\nu,\nu_x]+\tfrac r2[\nu,\nu_{xx}]+c_x[\nu,\nu_y]+c[\nu_x,\nu_y]+c[\nu,\nu_{xy}],
\]
where we used $[\nu_x,\nu_x]=0$. Every term of the form $[\nu,\,\cdot\,]$ is orthogonal to $\nu$ by Proposition~\ref{prop:bracket-identities}(i), so
\[
0=L=\langle u_x,\nu\rangle=c\,\langle[\nu_x,\nu_y],\nu\rangle=c\,\kappa ,
\]
and $\kappa\ne0$ by~\eqref{eq:kappa}, whence $c=0$. The same computation applied to $\mathcal N=\langle v_y,\nu\rangle$ gives $d=0$.
\end{proof}

\begin{lemma}
\label{lem:cgc-compat}
The functions $r,s$ of~\eqref{eq:cgc-ansatz} satisfy
\begin{equation}
\label{eq:cgc-compat}
r_y[\nu,\nu_x]-s_x[\nu,\nu_y]+(r-s)[\nu,\nu_{xy}]=0
\qquad\text{and}\qquad
r+s-2rs=0 .
\end{equation}
\end{lemma}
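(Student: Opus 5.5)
The plan is to read both identities off the integrability condition of the left-translated Maurer--Cartan form of $f$. With $u=f^{-1}f_x$ and $v=f^{-1}f_y$, differentiating gives $u_y=f^{-1}f_{xy}-vu$ and $v_x=f^{-1}f_{xy}-uv$. Hence
\[
u_y-v_x=[u,v],
\]
which is the same identity~\eqref{eq:two-expansions} used in the proof of Lemma~\ref{lem:frame-coeffs}. I then substitute the diagonal form~\eqref{eq:cgc-ansatz} from Lemma~\ref{lem:cgc-diagonal} and decompose the resulting identity along $\nu$ and along $\nu^\perp$.

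\emph{Left-hand side.} Differentiating~\eqref{eq:cgc-ansatz} gives
\[
u_y=\tfrac{r_y}2[\nu,\nu_x]+\tfrac r2[\nu_y,\nu_x]+\tfrac r2[\nu,\nu_{xy}],\qquad
v_x=\tfrac{s_x}2[\nu,\nu_y]+\tfrac s2[\nu_x,\nu_y]+\tfrac s2[\nu,\nu_{xy}] .
\]
Using~\eqref{eq:kappa}, $[\nu_x,\nu_y]=\kappa\nu$, we obtain
\[
u_y-v_x=\tfrac12\big(r_y[\nu,\nu_x]-s_x[\nu,\nu_y]+(r-s)[\nu,\nu_{xy}]\big)-\tfrac{r+s}2\,\kappa\,\nu .
\]

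\emph{Right-hand side.} We have $[u,v]=\tfrac{rs}4\big[[\nu,\nu_x],[\nu,\nu_y]\big]$. Since $\langle\nu,\nu\rangle=1$ and $\nu_x,\nu_y\perp\nu$, Proposition~\ref{prop:bracket-identities}(iii) with $Z=\nu$ applies and gives $[[\nu,\nu_x],[\nu,\nu_y]]=-4[\nu_x,\nu_y]=-4\kappa\nu$. Therefore
\[
[u,v]=-rs\,\kappa\,\nu .
\]

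\emph{Splitting the identity.} By Proposition~\ref{prop:bracket-identities}(i), each bracket $[\nu,\cdot\,]$ is orthogonal to $\nu$. So the bracketed expression on the left lies in the plane $\nu^\perp$, while every remaining term is a multiple of $\nu$. Since $\nu$ is unit spacelike, $\sltwo=\mathbb R\nu\oplus\nu^\perp$, and the two components must vanish separately.

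The $\nu^\perp$-component is exactly the first equation of~\eqref{eq:cgc-compat}. The $\nu$-component reads
\[
-\tfrac{r+s}2\,\kappa=-rs\,\kappa .
\]
Since $\kappa\ne0$ by~\eqref{eq:kappa}, this gives $r+s-2rs=0$.

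The only point requiring care is bookkeeping: the sign convention of the Maurer--Cartan equation, and checking that the orthogonality hypotheses of Proposition~\ref{prop:bracket-identities}(iii) hold. They do, because $\nu_x,\nu_y\perp\nu$. The remainder is the routine expansion above.
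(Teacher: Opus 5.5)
Your proposal is correct and follows the paper's proof essentially step for step: you expand the integrability condition $u_y-v_x=[u,v]$ with the diagonal ansatz, use Proposition~\ref{prop:bracket-identities}(iii) to get $[u,v]=-rs[\nu_x,\nu_y]$, and split the result into its $\nu^\perp$- and $\nu$-components. You also invoke $\kappa\ne0$ explicitly to get $r+s-2rs=0$, a step the paper leaves implicit.
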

\begin{proof}
The compatibility condition $\partial_y(f^{-1}f_x)-\partial_x(f^{-1}f_y)=[f^{-1}f_x,f^{-1}f_y]$ --- which is just the Maurer--Cartan equation for $f^{-1}df$, or equivalently the equality of mixed partials --- gives, with $u=\tfrac r2[\nu,\nu_x]$ and $v=\tfrac s2[\nu,\nu_y]$,
\[
u_y-v_x=\tfrac{r_y}2[\nu,\nu_x]-\tfrac{s_x}2[\nu,\nu_y]+\tfrac{r-s}2[\nu,\nu_{xy}]-\tfrac{r+s}2[\nu_x,\nu_y],
\]
using $[\nu_y,\nu_x]=-[\nu_x,\nu_y]$, while by Proposition~\ref{prop:bracket-identities}(iii) with $Z=\nu$ and $\langle\nu,\nu\rangle=1$,
\[
[u,v]=\tfrac{rs}4\big[[\nu,\nu_x],[\nu,\nu_y]\big]=-rs\,[\nu_x,\nu_y].
\]
Equating and multiplying by $2$,
\[
r_y[\nu,\nu_x]-s_x[\nu,\nu_y]+(r-s)[\nu,\nu_{xy}]-(r+s-2rs)[\nu_x,\nu_y]=0 .
\]
The first three terms are orthogonal to $\nu$ by Proposition~\ref{prop:bracket-identities}(i), hence tangential, whereas $[\nu_x,\nu_y]=\kappa\nu$ is normal; the two groups therefore vanish separately, which is~\eqref{eq:cgc-compat}.
\end{proof}

\begin{lemma}
\label{lem:cgc-curvature}
With the notation above, the extrinsic curvature and the mean curvature of $f$ are
\[
K+1=\left(\frac{1-r}{r}\right)^2,\qquad H=-2\,\frac{1-r}{r}\,\frac{\langle\nu_x,\nu_y\rangle}{\kappa} .
\]
\end{lemma}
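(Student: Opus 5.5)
The plan is to compute both fundamental forms directly from the ansatz~\eqref{eq:cgc-ansatz} and insert them into~\eqref{eq:gauss-general}. The tools are the bracket identities of Proposition~\ref{prop:bracket-identities}, the relation $[\nu_x,\nu_y]=\kappa\nu$ of~\eqref{eq:kappa}, and the second relation $r+s=2rs$ of Lemma~\ref{lem:cgc-compat}. Note first that $r,s\neq0$, since $f$ is an immersion and $\operatorname{ad}_\nu$ is injective on $\nu^\perp$.

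\emph{First fundamental form.} With $u=\tfrac r2[\nu,\nu_x]$ and $v=\tfrac s2[\nu,\nu_y]$, apply Proposition~\ref{prop:bracket-identities}(iv) with $Z=\nu$, which is legitimate because $\nu_x,\nu_y\perp\nu$. This gives
\[
E=-r^2\langle\nu_x,\nu_x\rangle,\qquad \mathcal F=-rs\langle\nu_x,\nu_y\rangle,\qquad G=-s^2\langle\nu_y,\nu_y\rangle .
\]
Hence $EG-\mathcal F^2=-r^2s^2\big(\langle\nu_x,\nu_y\rangle^2-\langle\nu_x,\nu_x\rangle\langle\nu_y,\nu_y\rangle\big)$. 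By Proposition~\ref{prop:bracket-identities}(ii) applied to $[\nu_x,\nu_y]=\kappa\nu$, with $\langle\nu,\nu\rangle=1$, the bracket equals $\kappa^2/4$. Therefore $\det I=-r^2s^2\kappa^2/4$.

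\emph{Second fundamental form.} In asymptotic coordinates $L=\mathcal N=0$, so only $M=\langle v_x+\tfrac12[u,v],\nu\rangle$ needs computing. In $v_x=\tfrac{s_x}2[\nu,\nu_y]+\tfrac s2[\nu_x,\nu_y]+\tfrac s2[\nu,\nu_{xy}]$, only the middle term has a normal component, namely $\tfrac s2\kappa$. By the computation in the proof of Lemma~\ref{lem:cgc-compat}, $\tfrac12[u,v]=-\tfrac{rs}2\kappa\nu$. Hence
\[
M=\tfrac{s(1-r)}2\kappa,\qquad \det II=-M^2 .
\]
Dividing by $\det I$ gives $K+1=\big((1-r)/r\big)^2$; the factors $s^2$ and $\kappa^2$ cancel, so the relation $r+s=2rs$ is not even needed here.

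\emph{Mean curvature.} Since $II$ is off-diagonal, $\operatorname{tr}(II\,I^{-1})=-2\mathcal FM/\det I$. The shape operator defined by $S(f_x,f_y)^T=(N_x,N_y)^T$ satisfies $\langle N_{x_i},f_{x_j}\rangle=-II_{ij}$, so $S=-II\,I^{-1}$ and $H=-\tfrac12\operatorname{tr}S=-\mathcal FM/\det I$. Substituting the expressions above,
\[
H=-\frac{(-rs\langle\nu_x,\nu_y\rangle)\,\tfrac{s(1-r)}2\kappa}{-r^2s^2\kappa^2/4}=-2\,\frac{1-r}{r}\,\frac{\langle\nu_x,\nu_y\rangle}{\kappa}.
\]
The only delicate point, and the one I expect to be the main obstacle, is this sign convention for $S$ (row versus column action, together with the minus sign in $\langle N_x,f_x\rangle=-L$). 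I would check it on the $I$-null computation of Section~\ref{sec:cmc}, where $H=2\varepsilon e^{-\omega}\langle f_{xy},N\rangle$ must be reproduced. Everything else is a routine use of the bracket identities.
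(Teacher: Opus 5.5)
Your proposal is correct and follows essentially the same route as the paper: Proposition~\ref{prop:bracket-identities}(iv) gives $E,\mathcal F,G$, the same computation gives $M=\tfrac s2(1-r)\kappa$, and $\det II/\det I$ and $H=-M\mathcal F/\det I$ are formed in the same way. The only differences are cosmetic: you fold the paper's two identities for $\kappa^2$ into one application of Proposition~\ref{prop:bracket-identities}(ii), and you explicitly justify the convention $S=-II\,I^{-1}$, which the paper uses without comment.
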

\begin{proof}
By Proposition~\ref{prop:bracket-identities}(iv) with $Z=\nu$,
\[
E=\tfrac{r^2}4\big\langle[\nu,\nu_x],[\nu,\nu_x]\big\rangle=-r^2\langle\nu_x,\nu_x\rangle,\quad
\mathcal F=-rs\langle\nu_x,\nu_y\rangle,\quad
G=-s^2\langle\nu_y,\nu_y\rangle .
\]
Since $L=\mathcal N=0$, only the mixed coefficient of $II$ remains; using Proposition~\ref{prop:bracket-identities}(i) to discard every term of the form $[\nu,\,\cdot\,]$ and Proposition~\ref{prop:bracket-identities}(iii) for the bracket term,
\[
M=\Big\langle v_x+\tfrac12[u,v],\nu\Big\rangle
=\Big\langle \tfrac s2[\nu_x,\nu_y]-\tfrac{rs}2[\nu_x,\nu_y],\nu\Big\rangle
=\tfrac s2(1-r)\,\kappa .
\]
Hence
\[
K+1=\frac{\det II}{\det I}=\frac{-\tfrac{s^2}4(1-r)^2\kappa^2}{r^2s^2\big(\langle\nu_x,\nu_x\rangle\langle\nu_y,\nu_y\rangle-\langle\nu_x,\nu_y\rangle^2\big)} .
\]
Two identities simplify this. First, since $[\nu_x,\nu_y]=\kappa\nu$ by~\eqref{eq:kappa} and $\langle\nu,\nu\rangle=1$,
\[
\kappa^2=\big\langle[\nu_x,\nu_y],[\nu_x,\nu_y]\big\rangle .
\]
Second, Proposition~\ref{prop:bracket-identities}(ii) applied to $X=\nu_x$, $Y=\nu_y$ gives
\[
\langle\nu_x,\nu_x\rangle\langle\nu_y,\nu_y\rangle-\langle\nu_x,\nu_y\rangle^2=-\tfrac14\big\langle[\nu_x,\nu_y],[\nu_x,\nu_y]\big\rangle .
\]
Substituting both, the factor $\langle[\nu_x,\nu_y],[\nu_x,\nu_y]\rangle$ cancels and
\[
K+1=\frac{-\tfrac{s^2}4(1-r)^2}{-\tfrac14r^2s^2}=\left(\frac{1-r}{r}\right)^2 .
\]
In particular $\det I=-\tfrac14r^2s^2\kappa^2$. Since $L=\mathcal N=0$, the shape operator $S=-II\,I^{-1}$ has trace $2M\mathcal F/\det I$, so
\[
H=-\frac{M\mathcal F}{\det I}=\frac{\tfrac s2(1-r)\kappa\cdot rs\langle\nu_x,\nu_y\rangle}{-\tfrac14r^2s^2\kappa^2}=-2\,\frac{1-r}{r}\,\frac{\langle\nu_x,\nu_y\rangle}{\kappa}. \qedhere
\]
\end{proof}

\begin{theorem}[{cf.~\cite[Thm.~3.1]{BIK14} in $S^3$}]
\label{thm:cgc-harmonic}
Let $f:\Omega\to\Hyp$ be a timelike immersion with positive extrinsic curvature and Gauss map $\nu$, and assume that $\nu$ is an immersion. Then the following are equivalent:
\begin{enumerate}\itemsep=0pt
\item[\rm(1)] $\nu$ is harmonic with respect to the Lorentzian structure induced by the second fundamental form;
\item[\rm(2)] $f$ has constant Gaussian curvature.
\end{enumerate}
\end{theorem}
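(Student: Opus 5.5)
The plan is to read both directions off the two compatibility relations of Lemma~\ref{lem:cgc-compat}, combined with the curvature formula of Lemma~\ref{lem:cgc-curvature}. Work in asymptotic coordinates, which are null coordinates for $II$, so that harmonicity in (1) is exactly $[\nu,\nu_{xy}]=0$. By Lemma~\ref{lem:cgc-diagonal} the ansatz \eqref{eq:cgc-ansatz} holds with functions $r,s$. Since $f$ is an immersion and $\operatorname{ad}_\nu$ is an isomorphism of $\nu^\perp$ (Proposition~\ref{prop:bracket-identities}(v)), both $r$ and $s$ are nowhere zero.

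First I would record some elementary consequences of the algebraic relation $r+s-2rs=0$. If $r=\tfrac12$, the relation reads $\tfrac12+s=s$, which is impossible; so $2r-1\ne0$ and $s=r/(2r-1)$. Next, $r\ne s$ everywhere. Indeed, $r=s$ would give $2r=2r^2$, hence $r=0$ or $r=1$. The first is excluded because $r\ne0$, and the second gives $K+1=0$ by Lemma~\ref{lem:cgc-curvature}, contradicting positive extrinsic curvature. Finally, $K+1>0$ means that $(1-r)/r$ is continuous and nowhere zero, so on a connected domain $(1-r)/r=\pm\sqrt{K+1}$ with a fixed sign. Hence $K$ is constant if and only if $r$ is constant, and then, via $s=r/(2r-1)$, also if and only if $s$ is constant.

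\emph{(2)$\Rightarrow$(1).} If $K$ is constant then $r$ and $s$ are constant. The first relation in \eqref{eq:cgc-compat} then collapses to $(r-s)[\nu,\nu_{xy}]=0$. Since $r\ne s$, this gives $[\nu,\nu_{xy}]=0$, so $\nu$ is harmonic for $II$.

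\emph{(1)$\Rightarrow$(2).} If $[\nu,\nu_{xy}]=0$, the first relation in \eqref{eq:cgc-compat} becomes $r_y[\nu,\nu_x]-s_x[\nu,\nu_y]=0$. Since $\nu$ is an immersion, $\nu_x,\nu_y$ form a basis of $\nu^\perp$, and $\operatorname{ad}_\nu$ carries it to the basis $[\nu,\nu_x],[\nu,\nu_y]$. Therefore $r_y=0$ and $s_x=0$. Differentiating $s=r/(2r-1)$ gives $s_x=-r_x/(2r-1)^2$, so $r_x=0$ as well. Thus $r$ is constant on the connected domain, and $K+1=((1-r)/r)^2$ is constant.

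The only delicate point I expect is the nondegeneracy bookkeeping: ruling out $r=s$ and $r=\tfrac12$, and ensuring the sign of $(1-r)/r$ does not jump. All of these are handled by the standing hypotheses $K+1>0$, $f$ an immersion, and $\nu$ an immersion, through the independence of $[\nu,\nu_x]$ and $[\nu,\nu_y]$.
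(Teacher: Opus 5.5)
Your proposal is correct and follows the paper's proof essentially step for step. Both work in asymptotic coordinates, use Lemmas~\ref{lem:cgc-compat} and~\ref{lem:cgc-curvature}, exclude $r=\tfrac12$ and $r=s$ to get (2)$\Rightarrow$(1), and use the independence of $[\nu,\nu_x],[\nu,\nu_y]$ together with a differentiation of the algebraic relation to get (1)$\Rightarrow$(2). The only difference is cosmetic: you differentiate $s=r/(2r-1)$ instead of $r+s-2rs=0$.
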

\begin{proof}
Work in asymptotic coordinates, so that Lemmas~\ref{lem:cgc-diagonal}--\ref{lem:cgc-curvature} apply.

Suppose (2) holds. By Lemma~\ref{lem:cgc-curvature}, $K$ is constant if and only if $(1-r)/r=1/r-1$ is constant --- a continuous function with constant square on the connected domain $\Omega$ --- that is if and only if $r$ is constant. Then the second equation of~\eqref{eq:cgc-compat}, in which $r=\tfrac12$ is impossible since it would read $\tfrac12=0$, forces $s=r/(2r-1)$ to be constant as well, and moreover $r\ne s$: were $r=s$, that equation would give $2r-2r^2=0$, hence $r\in\{0,1\}$, both excluded ($r=0$ contradicts~\eqref{eq:cgc-ansatz} defining an immersion, and $r=1$ gives $K+1=0$, contradicting positivity of the extrinsic curvature). With $r,s$ constant the first equation of~\eqref{eq:cgc-compat} becomes $(r-s)[\nu,\nu_{xy}]=0$, and since $r\ne s$ we conclude $[\nu,\nu_{xy}]=0$, which is (1).

Conversely, suppose (1) holds, so $[\nu,\nu_{xy}]=0$. The first equation of~\eqref{eq:cgc-compat} reduces to $r_y[\nu,\nu_x]=s_x[\nu,\nu_y]$, and $[\nu,\nu_x]$, $[\nu,\nu_y]$ are linearly independent by Proposition~\ref{prop:bracket-identities}(v), so $r_y=s_x=0$. Differentiating $r+s-2rs=0$ in $x$ gives $r_x(1-2s)=0$, and in $y$ gives $s_y(1-2r)=0$. If $s=\tfrac12$ then $r+\tfrac12-r=\tfrac12\ne0$, contradicting~\eqref{eq:cgc-compat}; hence $1-2s\ne0$ and $r_x=0$, so $r$ is constant, and likewise $s$. By Lemma~\ref{lem:cgc-curvature}, $K$ is constant, which is (2).
\end{proof}

\subsection{From a harmonic Gauss map back to the surface}
\label{sec:cgc-converse}

Theorem~\ref{thm:cgc-harmonic} turns the construction of timelike CGC surfaces into the construction of harmonic maps into $\Sph$, provided one can go back from $\nu$ to $f$. From $\nu$ alone the passage back is an integration, in contrast with the CMC case of Proposition~\ref{prop:sym-bobenko}, where the surface is read off algebraically from the frame. The contrast is one of starting point rather than of substance: once the extended frame of \S\ref{sec:cgc-cauchy} has been built, the surface is recovered by the same algebraic evaluation here too (Proposition~\ref{prop:cgc-sym}).

\begin{proposition}
\label{prop:cgc-converse}
Let $\Omega\subset\mathbb R^{1,1}$ be simply connected and let $\nu:\Omega\to\Sph$ be a harmonic immersion, where $\Omega$ carries a Lorentz structure with null coordinates $(x,y)$. Then for every $\rho\in\mathbb R$ with $\rho\ne0,\pm1$ there is a timelike immersion $f:\Omega\to\Hyp$ having $\nu$ as its Gauss map and constant extrinsic curvature
\[
K+1=\rho^2 ,
\]
obtained by integrating $f^{-1}df=\alpha$, where
\begin{equation}
\label{eq:cgc-alpha}
\alpha=\frac{[\nu,\nu_x]}{2(1+\rho)}\,dx+\frac{[\nu,\nu_y]}{2(1-\rho)}\,dy .
\end{equation}
The corresponding constants in~\eqref{eq:cgc-ansatz} are $r=\dfrac1{1+\rho}$ and $s=\dfrac1{1-\rho}$.
\end{proposition}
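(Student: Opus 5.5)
The plan is to run the computation of Lemma~\ref{lem:cgc-compat} backwards. With the constants $r=1/(1+\rho)$ and $s=1/(1-\rho)$, the one-form~\eqref{eq:cgc-alpha} is exactly $\alpha=u\,dx+v\,dy$ with $u=\tfrac r2[\nu,\nu_x]$ and $v=\tfrac s2[\nu,\nu_y]$. These are well defined because $\rho\ne\pm1$.

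\emph{Step 1: integrability.} The expansion in the proof of Lemma~\ref{lem:cgc-compat} is pure algebra using Proposition~\ref{prop:bracket-identities}(iii), and it holds for any functions $r,s$. It gives
\[
2\big(u_y-v_x-[u,v]\big)=r_y[\nu,\nu_x]-s_x[\nu,\nu_y]+(r-s)[\nu,\nu_{xy}]-(r+s-2rs)[\nu_x,\nu_y].
\]
Here $r,s$ are constant, so the first two terms vanish. The third vanishes by harmonicity~\eqref{eq:harmonic}. For the last, $r+s=2/(1-\rho^2)$ and $rs=1/(1-\rho^2)$, so $r+s-2rs=0$. Hence $\alpha$ satisfies the Maurer--Cartan equation $d\alpha+\tfrac12[\alpha,\alpha]=0$. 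Since $\Omega$ is simply connected, we can integrate $f^{-1}df=\alpha$ to get $f:\Omega\to\SLR=\Hyp$, unique up to left translation.

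\emph{Step 2: immersion, normal, Gauss map.} Both $u$ and $v$ lie in $\nu^\perp$ by Proposition~\ref{prop:bracket-identities}(i). Because $\nu$ is an immersion, $\nu_x,\nu_y$ is a basis of $\nu^\perp$. By Proposition~\ref{prop:bracket-identities}(v), $\operatorname{ad}_\nu$ is an isomorphism of $\nu^\perp$. Since $r,s\ne0$, the vectors $u,v$ are therefore independent and span $\nu^\perp$. This plane is Lorentzian because $\nu$ is spacelike, so $f$ is a timelike immersion. Its left-translated tangent plane is $\nu^\perp$, so $N:=f\nu$ is a unit normal and the Gauss map is $f^{-1}N=\nu$.

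\emph{Step 3: curvature.} First I check that $(x,y)$ are asymptotic coordinates. We have $L=\langle u_x,\nu\rangle=\tfrac r2\langle[\nu,\nu_{xx}],\nu\rangle=0$ by Proposition~\ref{prop:bracket-identities}(i), using that $r$ is constant and $[\nu_x,\nu_x]=0$. Similarly $\mathcal N=0$. So $f$ satisfies the ansatz~\eqref{eq:cgc-ansatz} with $L=\mathcal N=0$.

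The proof of Lemma~\ref{lem:cgc-curvature} uses only this ansatz, the vanishing $L=\mathcal N=0$, and $\kappa\ne0$. The last holds because $\nu$ is an immersion, by~\eqref{eq:kappa}. That proof gives $M=\tfrac s2(1-r)\kappa$ and $K+1=\big((1-r)/r\big)^2$. Here $(1-r)/r=(1+\rho)-1=\rho$, so $K+1=\rho^2$.

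Since $\rho\ne0$, we get $M\ne0$. So $II$ is a nondegenerate Lorentzian form whose null coordinates are $(x,y)$, and the extrinsic curvature is positive. This is consistent with $\nu$ being harmonic for the structure of $II$, as in Theorem~\ref{thm:cgc-harmonic}.

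The main point needing care is Step 3. One has to make sure that Lemma~\ref{lem:cgc-curvature}, which was stated for a surface given a priori, really only uses the data we have verified. One also has to check that the excluded values $\rho=0,\pm1$ are exactly those where $\alpha$ is undefined or $II$ degenerates. Everything else is the earlier computations read in reverse.
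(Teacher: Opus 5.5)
Your proposal is correct and follows the paper's proof essentially step by step. Both arguments obtain the Maurer--Cartan equation by reading Lemma~\ref{lem:cgc-compat} backwards with constant $r,s$, and both get $L=\mathcal N=0$ from the computation of Lemma~\ref{lem:cgc-diagonal}, $K+1=((1-r)/r)^2$ from Lemma~\ref{lem:cgc-curvature}, and the unit normal $f\nu$ from Proposition~\ref{prop:bracket-identities}(i). The differences are cosmetic: you check $r+s=2rs$ directly for the given $r,s$ instead of solving for $s$ from $r$, and you state explicitly that the tangent plane is Lorentzian and that $M\ne0$.
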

\begin{proof}
Fix constants $r,s$ with $r+s-2rs=0$ and consider the $\sltwo$-valued one-form $\alpha=\tfrac r2[\nu,\nu_x]dx+\tfrac s2[\nu,\nu_y]dy$. Since $\Omega$ is simply connected, $\alpha=f^{-1}df$ for some $f:\Omega\to\SLR$ if and only if $\alpha$ satisfies the Maurer--Cartan equation $d\alpha+\tfrac12[\alpha,\alpha]=0$, that is $\partial_y(\tfrac r2[\nu,\nu_x])-\partial_x(\tfrac s2[\nu,\nu_y])=[\tfrac r2[\nu,\nu_x],\tfrac s2[\nu,\nu_y]]$. This is the computation of Lemma~\ref{lem:cgc-compat} read backwards: with $r,s$ constant it reduces to
\[
(r-s)[\nu,\nu_{xy}]-(r+s-2rs)[\nu_x,\nu_y]=0 ,
\]
whose first term vanishes because $\nu$ is harmonic and whose second vanishes by the choice of $r,s$. Hence $\alpha$ integrates to a map $f$, unique up to left translation, which satisfies~\eqref{eq:cgc-ansatz}; it is an immersion because $\nu$ is one and $r,s\ne0$. Moreover $(x,y)$ are automatically asymptotic for $f$: the computation of Lemma~\ref{lem:cgc-diagonal}, carried out with $c=d=0$ from the start, gives $L=\langle u_x,\nu\rangle=0$ and likewise $\mathcal N=0$, since every surviving term is of the form $[\nu,\,\cdot\,]$ and hence orthogonal to $\nu$. Lemma~\ref{lem:cgc-curvature} therefore applies and gives $K+1=((1-r)/r)^2$. Finally, $\langle\nu,f^{-1}f_x\rangle$ and $\langle\nu,f^{-1}f_y\rangle$ vanish by Proposition~\ref{prop:bracket-identities}(i), and $\langle\nu,\nu\rangle=1$, so $f\nu$ is a unit normal to $f$; that is, $\nu$ is the Gauss map of $f$.

It remains to match the parameters. Writing $K+1=\rho^2$, the equation $(1-r)/r=\rho$ gives $r=1/(1+\rho)$, and then $s$ is determined by $r+s-2rs=0$:
\[
s=\frac{r}{2r-1}=\frac{\frac1{1+\rho}}{\frac{2}{1+\rho}-1}=\frac1{1-\rho},
\]
which requires $\rho\ne1$; conversely $r=1/(1+\rho)$, $s=1/(1-\rho)$ satisfy $r+s=\frac2{1-\rho^2}=2rs$. Substituting in $\alpha$ gives~\eqref{eq:cgc-alpha}. Replacing $\rho$ by $-\rho$ interchanges $r$ and $s$ and leaves $K$ unchanged, so each curvature $K+1=\rho^2$ is realized by the two surfaces with parameters $\pm\rho$, which have the same Gauss map and differ in which asymptotic direction carries the factor $1/(1+|\rho|)$.
\end{proof}

\begin{remark}
\label{rem:rho-branches}
The parameter $\rho$ ranges over $\mathbb R\setminus\{0,\pm1\}$, and the curvature depends only on $|\rho|$: $|\rho|>1$ gives $K=\rho^2-1>0$, while $0<|\rho|<1$ gives $-1<K<0$. Both branches occur in Proposition~\ref{prop:cgc-converse}, from the same harmonic map $\nu$, and both are reached from a constant mean curvature frame in Section~\ref{sec:bridge}. The excluded values $\rho=\pm1$ are exactly $K=0$, which by Proposition~\ref{prop:degenerate-locus} is where the Gauss map ceases to be an immersion; so the two standing hypotheses of this section, $K+1>0$ and $\nu$ immersive, are together equivalent to $\rho\ne0,\pm1$, and nothing is lost by excluding those values here.
\end{remark}

\subsection{Extended frames and the Cauchy problem}
\label{sec:cgc-cauchy}

We now put the harmonic map $\nu$ into loop group form and use it to solve the Cauchy problem. Since $\Sph\cong\SLR/\mathrm{Stab}(e_3)$, a \emph{frame} of $\nu:\Omega\to\Sph$ is a map $F:\Omega\to\SLR$ with $\nu=\Ad_Fe_3$; it is determined by $\nu$ up to $F\mapsto\pm F\exp(\varphi e_3)$ with $\varphi$ a function. Write $\mathfrak k=\mathbb R e_3$ and $\mathfrak p=\operatorname{span}(e_1,e_2)=e_3^\perp$, and split
\begin{equation}
\label{eq:frame-split}
F^{-1}F_x=\beta_0+\beta_1,\qquad F^{-1}F_y=\gamma_0+\gamma_{-1},\qquad \beta_0,\gamma_0\in\mathfrak k,\quad \beta_1,\gamma_{-1}\in\mathfrak p .
\end{equation}
Since $[\mathfrak p,e_3]\subset\mathfrak p$ and $[e_3,[e_3,\beta]]=4\beta$ for $\beta\in\mathfrak p$ by Proposition~\ref{prop:bracket-identities}(v), differentiating $\nu=\Ad_Fe_3$ gives
\begin{equation}
\label{eq:nu-derivs}
\begin{aligned}
&\nu_x=\Ad_F[\beta_1,e_3],&\qquad &\nu_y=\Ad_F[\gamma_{-1},e_3],\\
&\beta_1=-\tfrac14\big[e_3,\Ad_F^{-1}\nu_x\big],&\qquad &\gamma_{-1}=-\tfrac14\big[e_3,\Ad_F^{-1}\nu_y\big].
\end{aligned}
\end{equation}
In particular $\nu$ is an immersion exactly where $\beta_1$ and $\gamma_{-1}$ are linearly independent.

\begin{lemma}[{cf.~\cite[\S3]{BIK14}}]
\label{lem:cgc-mc}
The map $\nu$ is harmonic if and only if, for every $\lambda\in\mathbb R\setminus\{0\}$, the one-form
\begin{equation}
\label{eq:cgc-extended}
\hat\alpha^\lambda=(\beta_0+\lambda\beta_1)\,dx+(\gamma_0+\lambda^{-1}\gamma_{-1})\,dy
\end{equation}
satisfies the Maurer--Cartan equation. The family $\hat\alpha^\lambda$ satisfies $\hat\alpha^{-\lambda}=\Ad_{e_3}\hat\alpha^\lambda$, and its form is preserved by the gauge $F\mapsto F\exp(\varphi e_3)$.
\end{lemma}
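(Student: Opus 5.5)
The plan is to expand the Maurer--Cartan equation for $\hat\alpha^\lambda$ in powers of $\lambda$ and use the symmetric-space grading. The relevant bracket relations are
\[
[\mathfrak k,\mathfrak k]=0,\qquad [\mathfrak k,\mathfrak p]\subset\mathfrak p,\qquad [\mathfrak p,\mathfrak p]\subset\mathfrak k,
\]
which follow from~\eqref{eq:e-brackets}. Writing the equation as $\partial_y(\beta_0+\lambda\beta_1)-\partial_x(\gamma_0+\lambda^{-1}\gamma_{-1})=[\beta_0+\lambda\beta_1,\gamma_0+\lambda^{-1}\gamma_{-1}]$, I will sort the terms by power of $\lambda$.

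\begin{itemize}
\item The $\lambda^{1}$-coefficient is $\partial_y\beta_1=[\beta_1,\gamma_0]$, with values in $\mathfrak p$.
\item The $\lambda^{-1}$-coefficient is $-\partial_x\gamma_{-1}=[\beta_0,\gamma_{-1}]$, with values in $\mathfrak p$.
\item The $\lambda^0$-coefficient is $\partial_y\beta_0-\partial_x\gamma_0=[\beta_0,\gamma_0]+[\beta_1,\gamma_{-1}]$, with values in $\mathfrak k$.
\end{itemize}

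At $\lambda=1$ the sum of these three is the Maurer--Cartan equation of $F$, which holds automatically. Projecting that identity onto $\mathfrak k$ gives the $\lambda^0$ equation, and projecting onto $\mathfrak p$ gives the sum of the $\lambda^{\pm1}$ equations. So the family holds for every $\lambda$ if and only if the single $\mathfrak p$-valued equation $\partial_y\beta_1-[\beta_1,\gamma_0]=0$ holds; the $\lambda^{-1}$ equation then follows as the difference.

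The next step is to relate this equation to harmonicity. From~\eqref{eq:nu-derivs}, $\nu_x=\Ad_F[\beta_1,e_3]$. Differentiating in $y$ gives
\[
\nu_{xy}=\Ad_F\Big([\partial_y\beta_1,e_3]+\big[F^{-1}F_y,[\beta_1,e_3]\big]\Big).
\]
Splitting $F^{-1}F_y=\gamma_0+\gamma_{-1}$, the $\gamma_{-1}$ part contributes $[\gamma_{-1},[\beta_1,e_3]]\in\mathfrak k=\mathbb Re_3$. Since $\gamma_0$ is a multiple of $e_3$, the Jacobi identity gives $[\gamma_0,[\beta_1,e_3]]=[[\gamma_0,\beta_1],e_3]$. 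Hence
\[
\Ad_F^{-1}\nu_{xy}=\big[\partial_y\beta_1-[\beta_1,\gamma_0],\,e_3\big]+(\text{multiple of }e_3).
\]
Since $\nu=\Ad_Fe_3$, harmonicity $[\nu,\nu_{xy}]=0$ holds exactly when the $\mathfrak p$-component of $\Ad_F^{-1}\nu_{xy}$ vanishes. The map $\operatorname{ad}_{e_3}$ is injective on $\mathfrak p$ by Proposition~\ref{prop:bracket-identities}(v), so harmonicity is equivalent to $\partial_y\beta_1=[\beta_1,\gamma_0]$, which is the $\lambda^1$ equation. This proves the equivalence. Handling the sign conventions in this differentiation is the only delicate point, and I would check it by reproducing the analogous computation from Theorem~\ref{thm:cmc-harmonic}.

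For the symmetry, $\Ad_{e_3}$ fixes $\mathfrak k$ and acts as $-1$ on $\mathfrak p$, because $e_3e_1e_3=-e_1$ and $e_3e_2e_3=-e_2$. Therefore $\Ad_{e_3}\hat\alpha^\lambda=(\beta_0-\lambda\beta_1)dx+(\gamma_0-\lambda^{-1}\gamma_{-1})dy=\hat\alpha^{-\lambda}$.

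For gauge invariance, set $g=\exp(\varphi e_3)$. Then $(Fg)^{-1}d(Fg)=\Ad_{g^{-1}}(F^{-1}dF)+d\varphi\,e_3$. Here $\Ad_{g^{-1}}$ preserves both $\mathfrak k$ and $\mathfrak p$, and $d\varphi\,e_3\in\mathfrak k$. So the new $\beta_1,\gamma_{-1}$ are $\Ad_{g^{-1}}\beta_1$ and $\Ad_{g^{-1}}\gamma_{-1}$, and the new $\beta_0,\gamma_0$ are $\beta_0+\varphi_xe_3$ and $\gamma_0+\varphi_ye_3$. Since $g$ is independent of $\lambda$, the new one-form equals $g^{-1}dg+\Ad_{g^{-1}}\hat\alpha^\lambda$, which again has the form~\eqref{eq:cgc-extended}.
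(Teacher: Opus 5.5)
Your proof is correct and follows the paper's argument essentially step by step. It uses the same splitting of the Maurer--Cartan equation by powers of $\lambda$, and the same reduction to the single $\lambda^{1}$ equation via the $\mathfrak k$- and $\mathfrak p$-parts of the Maurer--Cartan equation of $F$. It also uses the same Jacobi-identity computation of $\Ad_F^{-1}\nu_{xy}$ combined with injectivity of $\operatorname{ad}_{e_3}$ on $\mathfrak p$, and the same grading argument for the $\lambda\mapsto-\lambda$ symmetry and the gauge invariance.
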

\begin{proof}
With $U=\beta_0+\lambda\beta_1$ and $V=\gamma_0+\lambda^{-1}\gamma_{-1}$, the Maurer--Cartan equation $U_y-V_x=[U,V]$ splits by powers of $\lambda$, using $[\beta_0,\gamma_0]=0$, into
\[
\partial_y\beta_1=[\beta_1,\gamma_0],\qquad -\partial_x\gamma_{-1}=[\beta_0,\gamma_{-1}],\qquad \partial_y\beta_0-\partial_x\gamma_0=[\beta_1,\gamma_{-1}] .
\]
The bracket of two elements of $\mathfrak p$ lies in $\mathfrak k$, and that of an element of $\mathfrak k$ with one of $\mathfrak p$ lies in $\mathfrak p$. So the third equation is the $\mathfrak k$-part of the Maurer--Cartan equation of $F$, and the sum of the first two is its $\mathfrak p$-part. Since $F$ is a frame, the family is flat for every $\lambda$ if and only if the first equation holds. On the other hand, differentiating $\nu_x=\Ad_F[\beta_1,e_3]$ in $y$ and using the Jacobi identity together with $[\gamma_0,e_3]=0$,
\[
\Ad_F^{-1}\nu_{xy}=[\gamma_{-1},[\beta_1,e_3]]+\big[\partial_y\beta_1-[\beta_1,\gamma_0],\,e_3\big],
\]
where the first term lies in $\mathfrak k$ and the second in $\mathfrak p$. As $\operatorname{ad}_{e_3}$ is injective on $\mathfrak p$, the condition $\nu_{xy}\in\mathbb R\nu$, which is~\eqref{eq:harmonic}, is again equivalent to $\partial_y\beta_1=[\beta_1,\gamma_0]$. The last two assertions hold because $\Ad_{e_3}$ fixes $\mathfrak k$ and negates $\mathfrak p$, and $\Ad_{\exp(-\varphi e_3)}$ preserves both.
\end{proof}

When $\nu$ is harmonic and $\Omega$ is simply connected, integrating $\hat F^{-1}d\hat F=\hat\alpha^\lambda$ with $\hat F|_{\lambda=1}=F$ gives an \emph{extended frame} $\hat F$ of $\nu$. It is $\sigma$-twisted, and of the admissible shape of Proposition~\ref{prop:dpw}, with $\lambda$-degrees $\{0,1\}$ in $dx$ and $\{-1,0\}$ in $dy$. No normalization of $\nu_x$ or $\nu_y$ is needed for this: a null or timelike derivative of $\nu$ is as good as a spacelike one (Example~\ref{ex:akamine-kiyohara}).

Before turning to the Cauchy problem, we record a consequence of the extended frame which is not visible from the harmonic map alone. Proposition~\ref{prop:cgc-converse} recovers the surface from $\nu$ by an integration; once the \emph{extended} frame is at hand, the surface is read off by the same two-point evaluation that Proposition~\ref{prop:sym-bobenko} uses on the constant mean curvature side.

\begin{proposition}[{cf.~\cite[(3.9)]{BIK14} in $S^3$}]
\label{prop:cgc-sym}
Let $\nu$ be a harmonic immersion with extended frame $\hat F$, and let $\mu\in\mathbb R\setminus\{0,\pm1\}$. Then
\begin{equation}
\label{eq:cgc-sym}
g^\mu:=\hat F|_{\lambda=\mu}\,\hat F|_{\lambda=1}^{-1}
\end{equation}
is a timelike immersion with Gauss map $\nu$, for which $(x,y)$ are asymptotic coordinates, of constant extrinsic curvature
\[
K+1=\Big(\frac{1+\mu}{1-\mu}\Big)^{2};
\]
it is the surface of Proposition~\ref{prop:cgc-converse} for $\rho=\tfrac{1+\mu}{1-\mu}$. Moreover $\mu>0$ gives $|\rho|>1$, that is $K>0$, and $\mu<0$ gives $|\rho|<1$, that is $-1<K<0$.
\end{proposition}
\begin{proof}
Write $F=\hat F|_{\lambda=1}$. Differentiating~\eqref{eq:cgc-sym}, and noting that the $\lambda$-independent terms of~\eqref{eq:cgc-extended} cancel,
\[
(g^\mu)^{-1}dg^\mu=\Ad_F\big(\hat\alpha^\mu-\hat\alpha^1\big)=\Ad_F\big((\mu-1)\beta_1\,dx+(\mu^{-1}-1)\gamma_{-1}\,dy\big).
\]
By~\eqref{eq:nu-derivs} and $[e_3,[e_3,\beta]]=4\beta$ for $\beta\in\mathfrak p$, we have $[\nu,\nu_x]=\Ad_F[e_3,[\beta_1,e_3]]=-4\Ad_F\beta_1$, and likewise $[\nu,\nu_y]=-4\Ad_F\gamma_{-1}$. Hence
\[
(g^\mu)^{-1}dg^\mu=\tfrac r2[\nu,\nu_x]\,dx+\tfrac s2[\nu,\nu_y]\,dy,
\qquad r=\frac{1-\mu}2,\quad s=\frac{\mu-1}{2\mu} .
\]
This is~\eqref{eq:cgc-ansatz}, and
\[
r+s=\frac{(1-\mu)\mu+(\mu-1)}{2\mu}=-\frac{(1-\mu)^2}{2\mu}=2rs .
\]
This is the identity $r+s-2rs=0$ on which the proof of Proposition~\ref{prop:cgc-converse} rests. That proof therefore applies verbatim, $\nu$ being a harmonic immersion, and gives a timelike immersion with Gauss map $\nu$ and asymptotic coordinates $(x,y)$. Moreover $(1-r)/r=\tfrac{1+\mu}{1-\mu}$ is the value of $\rho$ for which $r=1/(1+\rho)$, so $g^\mu$ is the surface of Proposition~\ref{prop:cgc-converse} for that $\rho$, and $K+1=\rho^2$. Finally, $\mu\mapsto(1+\mu)/(1-\mu)$ maps $(0,1)$ onto $(1,\infty)$ and $(1,\infty)$ onto $(-\infty,-1)$, so $|\rho|>1$ for $\mu>0$. It maps $(-1,0)$ onto $(0,1)$ and $(-\infty,-1)$ onto $(-1,0)$, so $|\rho|<1$ for $\mu<0$.
\end{proof}

\begin{remark}
\label{rem:cgc-sym}
The formula has exactly the shape of Proposition~\ref{prop:sym-bobenko}: two evaluations of one extended frame, and no derivative in $\lambda$. The integration of~\eqref{eq:cgc-alpha} is thus not a cost intrinsic to the constant Gaussian curvature side. It is the price of starting from $\nu$ rather than from $\hat F$. The two branches of Remark~\ref{rem:rho-branches} are separated by the sign of the spectral parameter.
\end{remark}

The following theorem solves the Cauchy problem along a non-characteristic curve, which we take to be the diagonal. The problem is to find a harmonic map with prescribed values and prescribed $x$-derivative along the diagonal. The harmonic map equation is a determined hyperbolic system and the diagonal is non-characteristic for it, so existence and uniqueness of a local solution follow from the Cauchy--Kowalevski theorem for analytic data, and from the standard theory of semilinear hyperbolic equations for smooth data. What the loop group method provides is a construction. Theorem~\ref{thm:cauchy} produces the solution by the generalized d'Alembert procedure, from potentials written explicitly in terms of the Cauchy data by~\eqref{eq:cauchy-coefficients}, as in the flat case~\cite[\S1]{BS13}. The data are prescribed on a non-characteristic curve, rather than along the two characteristics as in~\cite[Cors.~4.2 and~4.3]{Xia07}.

\begin{theorem}
\label{thm:cauchy}
Let $I\subset\mathbb R$ be an open interval, let $N_0:I\to\Sph$ and $N_1:I\to\sltwo$ be smooth with $\langle N_1,N_0\rangle=0$, and let $F:I\to\SLR$ be any smooth map with $\Ad_Fe_3=N_0$. Put
\begin{equation}
\label{eq:cauchy-coefficients}
\beta_1=-\tfrac14\big[e_3,\Ad_F^{-1}N_1\big],\qquad \gamma_{-1}=-\tfrac14\big[e_3,\Ad_F^{-1}(N_0'-N_1)\big],\qquad \hat\alpha^\lambda=\big(F^{-1}F'\big)_{\mathfrak k}+\lambda\beta_1+\lambda^{-1}\gamma_{-1},
\end{equation}
functions on $I$, where $(\,\cdot\,)_{\mathfrak k}$ denotes the $e_3$-component. Then there are an open neighbourhood $\Omega$ of the diagonal in $I\times I$ and a harmonic map $\nu:\Omega\to\Sph$, for the Lorentz structure in which $(x,y)$ are null coordinates, with
\[
\nu(x,x)=N_0(x),\qquad \nu_x(x,x)=N_1(x).
\]
It is obtained by the DPW recipe from the potentials $\mathcal X(x)=\hat\alpha^\lambda(x)\,dx$ and $\mathcal Y(y)=\hat\alpha^\lambda(y)\,dy$ with $X(x_0)=Y(x_0)=F(x_0)$, and its extended frame $\hat F$ coincides with $X$ along the diagonal.
\end{theorem}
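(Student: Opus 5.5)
The plan is to run the generalized d'Alembert procedure of Proposition~\ref{prop:dpw} in the twisted group $G_\sigma$. Then I will check three things: that the output is harmonic, that it takes the value $N_0$ on the diagonal, and that its $x$-derivative there is $N_1$.

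\emph{The potentials and the diagonal.} First, $\hat\alpha^\lambda$ in~\eqref{eq:cauchy-coefficients} has its $\lambda^0$ part in $\mathfrak k$ and its $\lambda^{\pm1}$ parts in $\mathfrak p$. Hence $\Ad_{e_3}\hat\alpha^{\lambda}=\hat\alpha^{-\lambda}$, so the potentials $\mathcal X,\mathcal Y$ are $\sigma$-twisted and the solutions $X,Y$ lie in $G_\sigma$. Their $\lambda$-supports are $\{-1,0,1\}$, which satisfies the conditions $k\le1$ and $k\ge-1$ of Proposition~\ref{prop:dpw}. Next, $X$ and $Y$ solve the same ODE on $I$ with the same initial value at $x_0$, so $X(x)=Y(x)$ for all $x\in I$. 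Thus $X(x)^{-1}Y(y)=\Id$ on the diagonal, and this lies in the big cell. By openness of the big cell (Theorem~\ref{thm:birkhoff}) and continuity, the factorization $X^{-1}Y=H_-H_+$ exists on an open neighbourhood $\Omega$ of the diagonal, and there $H_\pm=\Id$ on the diagonal. Uniqueness of the factorization in $G_\sigma$ keeps $H_\pm$ $\sigma$-twisted. So $\hat F=XH_-=YH_+^{-1}$ is a $\sigma$-twisted frame with admissible Maurer--Cartan form, and $\hat F(x,x)=X(x)$.

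\emph{Harmonicity and the value $N_0$.} Twistedness together with admissibility means $\hat F^{-1}d\hat F$ has exactly the form~\eqref{eq:cgc-extended}, with $\lambda^0$ parts in $\mathfrak k$ and $\lambda^{\pm1}$ parts in $\mathfrak p$. Since it is flat for every $\lambda$, Lemma~\ref{lem:cgc-mc} shows that $\nu:=\Ad_{\hat F|_{\lambda=1}}e_3$ is harmonic. To identify the value on the diagonal, write $P=(F^{-1}F')_{\mathfrak p}$. Differentiating $N_0=\Ad_Fe_3$ gives $N_0'=\Ad_F[P,e_3]$, and then Proposition~\ref{prop:bracket-identities}(v) gives
\[
-\tfrac14\big[e_3,\Ad_F^{-1}N_0'\big]=\tfrac14[e_3,[e_3,P]]=P .
\]
Hence $\beta_1+\gamma_{-1}=P$ and $\hat\alpha^1=F^{-1}F'$. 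Because $X(x_0)=F(x_0)$, this forces $X|_{\lambda=1}=F$, and therefore $\nu(x,x)=\Ad_Fe_3=N_0$.

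\emph{The derivative, by degree counting.} This is the step needing care. I would write the $dx$-part of the Maurer--Cartan form in two ways. From $\hat F=XH_-$ it is $\Ad_{H_-^{-1}}\hat\alpha^\lambda+H_-^{-1}\partial_xH_-$. Since $H_-$ is normalized to $\Id$ at $\lambda=\infty$, the term $H_-^{-1}\partial_xH_-$ has only strictly negative powers of $\lambda$. On the diagonal $H_-=\Id$, so the $\lambda^1$ coefficient of the $dx$-part there is exactly $\beta_1$. (The form $\hat F=YH_+^{-1}$ only confirms that the support is $\{0,1\}$.) Then~\eqref{eq:nu-derivs} gives $\nu_x=\Ad_F[\beta_1,e_3]$. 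Set $M=\Ad_F^{-1}N_1$; the hypothesis $\langle N_1,N_0\rangle=0$ says $M\perp e_3$. Applying Proposition~\ref{prop:bracket-identities}(v) once more,
\[
\nu_x=\tfrac14\Ad_F[e_3,[e_3,M]]=N_1 .
\]
The same argument on the $dy$-side, using $H_+$, gives $\nu_y=N_0'-N_1$ on the diagonal, which is consistent with the chain rule. The main obstacle is justifying two points: that the Birkhoff factors stay in $G_\sigma$, and that $H_-^{-1}\partial_xH_-$ has strictly negative support. Both come from the uniqueness and normalization statements in Theorem~\ref{thm:birkhoff}, and I would verify them directly there.
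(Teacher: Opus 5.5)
Your proposal is correct and follows the paper's proof essentially step by step. The steps match: the factorization is trivial on the diagonal and stays defined nearby by openness of the big cell; twisting plus admissibility gives the shape~\eqref{eq:cgc-extended} and harmonicity via Lemma~\ref{lem:cgc-mc}; the identity $\hat\alpha^1=F^{-1}F'$ gives $\nu=N_0$; and the $\lambda^1$-coefficient $\beta_1$ gives $\nu_x=N_1$ by Proposition~\ref{prop:bracket-identities}(v).

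The one difference is how you identify that coefficient. You read it off from $\hat F=XH_-$ and the normalization of $H_-$ at $\lambda=\infty$, whereas the paper uses $\hat F=X$ along the diagonal and separates $\hat\alpha^\lambda$ into its $dx$- and $dy$-parts by $\lambda$-degree. One small correction: strictly, $X$ and $Y$ lie in $F(x_0)G_\sigma$ rather than in $G_\sigma$, as the paper notes. This is harmless, since $X^{-1}Y\in G_\sigma$.
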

\begin{proof}
The one-form $\hat\alpha^\lambda$ has $\lambda$-support $\{-1,0,1\}$, which lies both in the range $\{k\le1\}$ required of $\mathcal X$ and in the range $\{k\ge-1\}$ required of $\mathcal Y$ in Proposition~\ref{prop:dpw}. (This is the generalized form of the DPW recipe~\cite{BD09}, in which the potentials are not required to be holomorphic in a complex parameter.) Since $X$ and $Y$ solve the same equation with the same initial value, $X(u)=Y(u)$ for all $u$. So $X^{-1}(x)Y(y)=\Id$ along the diagonal, which lies in the big cell; by openness, $X^{-1}(x)Y(y)$ stays in it on an open neighbourhood $\Omega$ of the diagonal, and we work there. Along the diagonal the Birkhoff factorization is trivial, $H_-=H_+=\Id$, so the extended frame $\hat F=XH_-$ agrees with $X$ there.

The potentials are $\sigma$-twisted, since $(F^{-1}F')_{\mathfrak k}\in\mathfrak k$ and $\beta_1,\gamma_{-1}\in\mathfrak p$. So $X$ and $Y$ lie in $F(x_0)G_\sigma$, the factors $H_\pm$ lie in $G_\sigma$, and the Maurer--Cartan form of $\hat F$ is $\sigma$-twisted. Together with Proposition~\ref{prop:dpw} this means that it has the shape~\eqref{eq:cgc-extended} for the frame $\hat F|_{\lambda=1}$ of $\nu:=\Ad_{\hat F|_{\lambda=1}}e_3$. It is flat for every $\lambda$, so $\nu$ is harmonic by Lemma~\ref{lem:cgc-mc}.

It remains to check the initial conditions. Along the diagonal $\hat F=X$, so the tangential derivative of $\hat F$ is $\hat\alpha^\lambda$, and the $dx$- and $dy$-parts are separated by their $\lambda$-degrees: there the coefficients of $\lambda$ and $\lambda^{-1}$ in $\hat F^{-1}d\hat F$ are the $\beta_1$ and $\gamma_{-1}$ of~\eqref{eq:cauchy-coefficients}. At $\lambda=1$, the $\mathfrak p$-part of $F^{-1}F'$ is $\beta_1+\gamma_{-1}$. Indeed $N_0'=\Ad_F[F^{-1}F',e_3]$ gives $(F^{-1}F')_{\mathfrak p}=-\tfrac14[e_3,\Ad_F^{-1}N_0']$ as in~\eqref{eq:nu-derivs}, which is $\beta_1+\gamma_{-1}$. So $\hat\alpha^1=F^{-1}F'$, and $X|_{\lambda=1}=F$ along the diagonal, both solving the same equation with the same initial value. Hence $\nu(u,u)=\Ad_{F(u)}e_3=N_0(u)$. Finally, $m:=\Ad_F^{-1}N_1\in\mathfrak p$ because $\langle N_1,N_0\rangle=0$, and by~\eqref{eq:nu-derivs}
\[
\nu_x(u,u)=\Ad_F[\beta_1,e_3]=\Ad_F\big(\tfrac14[e_3,[e_3,m]]\big)=\Ad_Fm=N_1(u). \qedhere
\]
\end{proof}

The choice of the frame $F$ of $N_0$ along the curve is free. Changing it by $\exp(\varphi e_3)$ changes the potentials by a gauge but not the solution, which is unique. The harmonic map $\nu$ is an immersion near the diagonal exactly when $N_1$ and $N_0'-N_1$ are linearly independent, by~\eqref{eq:nu-derivs}.

We can now solve the geometric Cauchy problem for timelike CGC surfaces: to find a surface of prescribed constant curvature containing a prescribed curve with a prescribed normal along it. Combining Theorem~\ref{thm:cauchy} with Proposition~\ref{prop:cgc-sym}, the solution is read off algebraically from an extended frame whose potentials are written in the data.

\begin{corollary}
\label{cor:geometric-cauchy}
Let $I\subset\mathbb R$ be an open interval, and let $\tilde f:I\to\Hyp$ and $\tilde\nu:I\to\Sph$ satisfy $\langle\tilde f^{-1}\tilde f',\tilde\nu\rangle=0$. Let $\rho\in\mathbb R\setminus\{0,\pm1\}$, and put
\[
w=\tfrac12\big[\tilde\nu,\tilde f^{-1}\tilde f'\big],\qquad
N_1=\frac{\rho+1}{2\rho}\Big(\tilde\nu'+(\rho-1)w\Big),\qquad
\mu=\frac{\rho-1}{\rho+1}.
\]
Assume that $\tilde\nu'$ and $w$ are linearly independent at every point of $I$. Let $\hat F$ be the extended frame of Theorem~\ref{thm:cauchy} for the data $N_0=\tilde\nu$ and $N_1$. Then, on a neighbourhood $\Omega$ of the diagonal,
\[
f=\tilde f(x_0)\,\hat F|_{\lambda=\mu}\,\hat F|_{\lambda=1}^{-1}
\]
is a timelike immersion of constant extrinsic curvature $K+1=\rho^2$, with Gauss map $\nu=\Ad_{\hat F|_{\lambda=1}}e_3$ and asymptotic coordinates $(x,y)$, and
\[
f(x,x)=\tilde f(x),\qquad \nu(x,x)=\tilde\nu(x)\qquad (x\in I).
\]
\end{corollary}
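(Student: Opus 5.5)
The plan is to reduce everything to Theorem~\ref{thm:cauchy} followed by Proposition~\ref{prop:cgc-sym}, and then to check the two initial conditions along the diagonal by an ODE-uniqueness argument. Throughout write $t:=\tilde f^{-1}\tilde f'$, so that $t\perp\tilde\nu$ by hypothesis, and put $c:=(\rho+1)/(2\rho)$, so that $N_1=c\,\tilde\nu'+c(\rho-1)w$.

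\emph{Step 1: the hypotheses of Theorem~\ref{thm:cauchy}.} I check that $N_1\perp N_0=\tilde\nu$. First, $\tilde\nu'\perp\tilde\nu$ because $\langle\tilde\nu,\tilde\nu\rangle=1$. Second, $w\perp\tilde\nu$ by Proposition~\ref{prop:bracket-identities}(i). Theorem~\ref{thm:cauchy} then gives a harmonic $\nu$ on a neighbourhood of the diagonal with $\nu(x,x)=\tilde\nu(x)$ and $\nu_x(x,x)=N_1(x)$.

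\emph{Step 2: $\nu$ is an immersion.} By the remark after Theorem~\ref{thm:cauchy}, this holds near the diagonal iff $N_1$ and $\tilde\nu'-N_1$ are independent. In the basis $\tilde\nu',w$ these vectors have coefficients $(c,\;c(\rho-1))$ and $(1-c,\;-c(\rho-1))$. The determinant is $-c(\rho-1)$, which is nonzero since $\rho\ne\pm1$. So the independence of $\tilde\nu',w$ transfers, and by openness we may shrink $\Omega$ so that $\nu$ is an immersion there.

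\emph{Step 3: curvature and Gauss map.} Now apply Proposition~\ref{prop:cgc-sym} with $\mu=(\rho-1)/(\rho+1)$. We have $\mu\ne0,\pm1$ precisely because $\rho\ne1,0$, and $\rho\ne-1$ keeps $\mu$ finite. A direct computation gives $(1+\mu)/(1-\mu)=\rho$. Hence $\hat F|_{\lambda=\mu}\hat F|_{\lambda=1}^{-1}$ has Gauss map $\nu$, asymptotic coordinates $(x,y)$ and $K+1=\rho^2$. Left translation by the constant $\tilde f(x_0)$ preserves all of this, including $f^{-1}N$.

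\emph{Step 4: the curve condition $f(x,x)=\tilde f(x)$.} This is the step I expect to need the real computation. At the base point, $\hat F(x_0,x_0)=X(x_0)=F(x_0)$ is independent of $\lambda$, so $f(x_0,x_0)=\tilde f(x_0)$. It therefore suffices to show that $g(u):=f(u,u)$ satisfies $g^{-1}g'=t$ and then invoke uniqueness for that linear ODE. By Proposition~\ref{prop:cgc-sym} and Proposition~\ref{prop:cgc-converse}, $f^{-1}df=\tfrac r2[\nu,\nu_x]dx+\tfrac s2[\nu,\nu_y]dy$ with $r=1/(1+\rho)$ and $s=1/(1-\rho)$. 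On the diagonal, $\nu_x=N_1$ and $\nu_y=\tilde\nu'-N_1$, so
\[
g^{-1}g'=\tfrac{r-s}2[\tilde\nu,N_1]+\tfrac s2[\tilde\nu,\tilde\nu'] .
\]
Proposition~\ref{prop:bracket-identities}(v) gives $[\tilde\nu,w]=\tfrac12[\tilde\nu,[\tilde\nu,t]]=2t$, hence $[\tilde\nu,N_1]=c[\tilde\nu,\tilde\nu']+2c(\rho-1)t$. Using $\tfrac{r-s}2\,c=-\tfrac1{2(1-\rho)}=-\tfrac s2$, the coefficient of $[\tilde\nu,\tilde\nu']$ becomes $-\tfrac s2+\tfrac s2=0$. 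The coefficient of $t$ becomes $-\tfrac1{2(1-\rho)}\cdot2(\rho-1)=1$. Thus $g^{-1}g'=t$, and so $g=\tilde f$. The only delicate points are bookkeeping: the sign conventions in $s$ and $\mu$, and confirming that the formula for $f^{-1}df$ obtained in the proof of Proposition~\ref{prop:cgc-sym} is evaluated with the correct $\lambda$-degrees along the diagonal.
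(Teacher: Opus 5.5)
Your proposal is correct and follows the paper's proof essentially step for step. It applies Theorem~\ref{thm:cauchy} to the data $(\tilde\nu,N_1)$, uses the determinant $(1-\rho^2)/(2\rho)$ to pass the independence of $\tilde\nu'$ and $w$ on to $\nu_x,\nu_y$ along the diagonal, invokes Proposition~\ref{prop:cgc-sym} with $(1+\mu)/(1-\mu)=\rho$, and concludes by ODE uniqueness from the common value at $x_0$. The only difference is cosmetic: the paper first shows $r\nu_x+s\nu_y=w$ on the diagonal and then applies Proposition~\ref{prop:bracket-identities}(v), whereas you expand $g^{-1}g'$ directly, which is the same computation in a different order.
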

\begin{proof}
We first identify the Cauchy data for $\nu$. Suppose that such an $f$ exists and is one of the surfaces of Proposition~\ref{prop:cgc-converse}. It then satisfies~\eqref{eq:cgc-ansatz} with $r=1/(1+\rho)$ and $s=1/(1-\rho)$, so along the diagonal
\[
f^{-1}f_t=f^{-1}f_x+f^{-1}f_y=\tfrac12\big[\nu,\,r\nu_x+s\nu_y\big].
\]
By Proposition~\ref{prop:bracket-identities}(v), $[\nu,[\nu,W]]=4W$ for every $W$ orthogonal to $\nu$; hence
\[
w=\tfrac12[\tilde\nu,\tilde f^{-1}\tilde f_t]=\tfrac14\big[\nu,[\nu,r\nu_x+s\nu_y]\big]=r\nu_x+s\nu_y .
\]
Combining this with $\nu'=\nu_x+\nu_y$ and solving the resulting linear system, using $r-s=-2\rho/(1-\rho^2)$,
\begin{equation}
\label{eq:nux-from-data}
\nu_x=\frac{w-s\,\nu'}{r-s}=\frac{\rho+1}{2\rho}\Big(\tilde\nu'+(\rho-1)w\Big)=N_1 .
\end{equation}
Now run the construction. We have $\langle N_1,\tilde\nu\rangle=0$, since $\langle\tilde\nu',\tilde\nu\rangle=0$ and $w\perp\tilde\nu$. So Theorem~\ref{thm:cauchy} provides a harmonic map $\nu$ with $\nu(x,x)=\tilde\nu(x)$ and $\nu_x(x,x)=N_1(x)$, and its extended frame $\hat F$. Along the diagonal $\nu_x$ and $\nu_y=\tilde\nu'-\nu_x$ are the two combinations $\frac{\rho+1}{2\rho}(\tilde\nu'+(\rho-1)w)$ and $\frac{\rho-1}{2\rho}(\tilde\nu'-(\rho+1)w)$ of $\tilde\nu'$ and $w$. Their determinant in that basis is $(1-\rho^2)/(2\rho)\ne0$, so they are independent exactly when $\tilde\nu'$ and $w$ are, and $\nu$ is an immersion on $\Omega$ after shrinking it. Proposition~\ref{prop:cgc-sym}, with $(1+\mu)/(1-\mu)=\rho$, then shows that $\hat F|_{\lambda=\mu}\hat F|_{\lambda=1}^{-1}$ is a timelike immersion of constant extrinsic curvature $\rho^2$ with Gauss map $\nu$ and asymptotic coordinates $(x,y)$. Its Maurer--Cartan form is~\eqref{eq:cgc-alpha}. Left translation by $\tilde f(x_0)$ changes neither of these properties.

Finally, $f$ contains the prescribed curve. Restricted to the diagonal, with $x=y=t$, the one-form~\eqref{eq:cgc-alpha} is $\tfrac12[\nu,r\nu_x+s\nu_y]\,dt$, and by construction $r\nu_x+s\nu_y=w$ there. Indeed, substituting $\nu_y=\nu'-\nu_x$ and~\eqref{eq:nux-from-data},
\[
r\nu_x+s\nu_y=(r-s)\nu_x+s\tilde\nu'=-\frac{\tilde\nu'+(\rho-1)w}{1-\rho}+\frac{\tilde\nu'}{1-\rho}=w .
\]
Hence $f^{-1}f_t=\tfrac12[\nu,w]=\tfrac14[\tilde\nu,[\tilde\nu,\tilde f^{-1}\tilde f_t]]=\tilde f^{-1}\tilde f_t$ along the diagonal, again by Proposition~\ref{prop:bracket-identities}(v). Since $\hat F|_\lambda(x_0,x_0)=X(x_0)=F(x_0)$ for every $\lambda$, we also have $f(x_0,x_0)=\tilde f(x_0)$. So $f$ and $\tilde f$ solve the same equation along the diagonal with the same initial value, and they agree there.
\end{proof}

The geometric Cauchy problem is thus solved by the DPW method followed by an algebraic evaluation. The potentials are written in the data through~\eqref{eq:cauchy-coefficients} and $N_1$, and the one-form~\eqref{eq:cgc-alpha} need not be integrated. This is the counterpart for constant Gaussian curvature surfaces in $\Hyp$ of the solution in~\cite{BS13} for timelike constant mean curvature surfaces in $\mathbb R^{2,1}$. The only hypothesis beyond the compatibility $\tilde\nu\perp\tilde f^{-1}\tilde f'$ is that $\tilde\nu'$ and $w$ be independent, which is what makes the Gauss map an immersion. No causal condition on the data is needed.

\begin{example}[the construction carried out]
\label{ex:cauchy}
We run Theorem~\ref{thm:cauchy} and Corollary~\ref{cor:geometric-cauchy} on data for which every step is explicit. Let $I\subset(-\infty,0)$ be an open interval and take, on $I$,
\[
N_0(t)=-\frac{1}{\sinh 2t}\,e_1-\coth(2t)\,e_3,
\qquad
N_1(t)=\frac{\cosh 2t}{\sinh^2 2t}\,e_1-\coth(2t)\,e_2+\frac{1}{\sinh^2 2t}\,e_3 .
\]
Both take values in $\Sph$: using $\langle e_1,e_1\rangle=-1$ and $\langle e_2,e_2\rangle=\langle e_3,e_3\rangle=1$,
\[
\langle N_0,N_0\rangle=\frac{\cosh^2 2t-1}{\sinh^2 2t}=1,
\qquad
\langle N_1,N_1\rangle=\frac{1-\cosh^2 2t}{\sinh^4 2t}+\frac{\cosh^2 2t}{\sinh^2 2t}=1 ,
\]
and $\langle N_1,N_0\rangle=\cosh 2t/\sinh^3 2t-\cosh 2t/\sinh^3 2t=0$, so the data are admissible. Along $I$ take the frame
\[
F(t)=\frac{\Id-e^{2t}e_2}{\sqrt{1-e^{4t}}},
\]
for which $\Ad_Fe_3=N_0$. Writing $\varsigma=2t$ for brevity, one finds $F^{-1}F'=e_2/\sinh\varsigma$, with no $\mathfrak k$-part, and~\eqref{eq:cauchy-coefficients} gives
\begin{equation}
\label{eq:example-abc}
\beta_1=\frac{\coth\varsigma}2\,e_1+\frac{1}{2\sinh\varsigma}\,e_2,\qquad
\gamma_{-1}=-\frac{\coth\varsigma}2\,e_1+\frac{1}{2\sinh\varsigma}\,e_2 .
\end{equation}
The potentials of the theorem are $\mathcal X(x)=\hat\alpha^\lambda(x)\,dx$ and $\mathcal Y(y)=\hat\alpha^\lambda(y)\,dy$, with $\hat\alpha^\lambda=\lambda\beta_1+\lambda^{-1}\gamma_{-1}$ evaluated at $\varsigma=2x$, respectively $\varsigma=2y$.

The solution produced is
\begin{equation}
\label{eq:example-nu}
\nu(x,y)=-\frac{1}{\sinh(x+y)}\,e_1-\coth(x+y)\Big(\sin(x-y)\,e_2+\cos(x-y)\,e_3\Big),
\end{equation}
on $I\times I$, where $x+y<0$. This is verified directly rather than by running the factorization, which is legitimate because the solution of the Cauchy problem is unique: writing $S=\sinh(x+y)$, $C=\cosh(x+y)$ and $u=\sin(x-y)e_2+\cos(x-y)e_3$, $u'=\cos(x-y)e_2-\sin(x-y)e_3$, so that $\nu=-S^{-1}e_1-CS^{-1}u$, one has
\[
\nu_x=\frac{C}{S^2}e_1+\frac1{S^2}u-\frac CS u',\qquad
\nu_y=\frac{C}{S^2}e_1+\frac1{S^2}u+\frac CS u' ,
\]
whence $\langle\nu,\nu\rangle=1$ and $\langle\nu_x,\nu_x\rangle=\langle\nu_y,\nu_y\rangle=1$. Differentiating once more, and using $u_y=-u'$, $u'_y=u$ and $C^2-S^2=1$,
\[
\nu_{xy}=-\frac{1+C^2}{S^3}\big(e_1+C\,u\big)=\frac{1+C^2}{S^2}\,\nu ,
\]
a multiple of $\nu$, so that $[\nu,\nu_{xy}]=0$ and $\nu$ is harmonic. Setting $x=y=t$ recovers $\nu(t,t)=N_0(t)$ and $\nu_x(t,t)=N_1(t)$.
The map $\nu$ is an immersion: $\{e_1,u,u'\}$ is a basis of $\sltwo$, and $\nu_y=k\nu_x$ would force $k=1$ on comparing the $e_1$-components, since $C\ne0$, and then $C/S=-C/S$ on comparing the $u'$-components, which is impossible.

This one harmonic map now yields surfaces. For each $0<\rho\ne1$, Proposition~\ref{prop:cgc-converse} integrates~\eqref{eq:cgc-alpha} to a timelike immersion of constant extrinsic curvature $\rho^2$ with Gauss map~\eqref{eq:example-nu}, and Corollary~\ref{cor:geometric-cauchy} identifies the curve it contains: the one obtained by integrating $\alpha$ along the diagonal, along which $w=r\nu_x+s\nu_y$ with $r=1/(1+\rho)$ and $s=1/(1-\rho)$. The one-form to be integrated is explicit:
\[
[\nu,\nu_x]=-\frac2S\,u'+\frac{2C}{S^2}\big(u+C\,e_1\big),
\qquad
[\nu,\nu_y]=-\frac2S\,u'-\frac{2C}{S^2}\big(u+C\,e_1\big),
\]
as follows from $[e_1,u]=-2u'$, $[e_1,u']=2u$, $[u,u']=2e_1$ and $1-C^2=-S^2$. Both branches of Proposition~\ref{prop:cgc-converse} are reached: $\rho>1$ gives $K>0$ and $0<\rho<1$ gives $-1<K<0$, from this one harmonic map. The mean curvature is explicit as well. Here $\langle\nu_x,\nu_y\rangle=-(1+C^2)/S^2$ and $[\nu_x,\nu_y]=-(4C/S^2)\nu$, so $\kappa=-4C/S^2$, and Lemma~\ref{lem:cgc-curvature} with $(1-r)/r=\rho$ gives
\[
H=-\rho\,\frac{1+\cosh^2(x+y)}{2\cosh(x+y)},
\]
which is not constant. For $0<|\rho|<1$ these are therefore surfaces with $-1<K<0$ and nonconstant mean curvature, and by Proposition~\ref{prop:no-parallel-cmc} none of their parallel surfaces has constant mean curvature. The immersion itself can also be written in closed form. The map $\nu$ is equivariant,
\[
\nu(x,y)=\Ad_{\exp(-\frac12(x-y)e_1)}\,\nu\big(\tfrac{x+y}2,\tfrac{x+y}2\big),
\]
so~\eqref{eq:cgc-alpha} satisfies $\alpha(x+c,y-c)=\Ad_{\exp(-ce_1)}\alpha(x,y)$, and the surface has the form $f=\exp\big((x-y)X\big)\,f_0(x+y)\,\exp\big(\tfrac12(x-y)e_1\big)$ with $X\in\sltwo$ constant and $f_0$ the solution of a linear ordinary differential equation in $x+y$. That equation can be solved explicitly. Put $Z=e^{x+y}\in(0,1)$ and $B_\rho=\rho/(\rho^2-1)$, and let $\gamma\in\mathbb R$ be defined by $\cos\gamma=(\rho^2-1)/(\rho^2+1)$ and $\sin\gamma=2\rho/(\rho^2+1)$. With $a_\pm=B_\rho(x+y)\pm\tfrac\gamma2$ put
\[
W=\frac{1}{1-Z^2}
\begin{pmatrix}-\cos a_+-Z\sin a_-&\sin a_++Z\cos a_-\\-\sin a_++Z\cos a_-&-\cos a_++Z\sin a_-\end{pmatrix}
\begin{pmatrix}1&Z\\Z&1\end{pmatrix}.
\]
Then, up to left translation,
\[
f(x,y)=\exp\big(c_\rho(x-y)e_1\big)\,W\,\exp\big(\tfrac12(x-y)e_1\big),
\]
where $c_\rho=(1+\rho^2)/\big(2(1-\rho^2)\big)$ and $\exp(ce_1)=\cos c\,\Id+\sin c\,e_1$. A direct computation shows that $\det f=1$ and that $f^{-1}df$ is the one-form~\eqref{eq:cgc-alpha}; by the equivariance it suffices to carry it out on the diagonal. The oscillating factors $\cos a_\pm$, $\sin a_\pm$ reflect the complex exponents $\pm iB_\rho$ of that equation at $x+y=-\infty$.
\end{example}

\begin{example}[a harmonic map taken from the literature]
\label{ex:akamine-kiyohara}
The input of Proposition~\ref{prop:cgc-converse} is a harmonic immersion into $\Sph$, and such maps are produced by methods unrelated to the ones used here. Akamine and Kiyohara~\cite{AK26} construct Lorentzian harmonic maps into $\Sph$ from framed null curves; we run one of them through the construction of this section, both because it supplies surfaces and because of what it shows about the hypotheses of this subsection.

Their Example~6.3 --- in the notation of~\cite[Thm.~1.2]{AK26}, the data $h(\sigma)=\tanh\sigma$ and the value $1$ of their constant $H$, which is not a mean curvature here --- is the map
\[
\nu(\sigma,\tau)=\Big(-\sinh 2\sigma-\tfrac\tau2\cosh2\sigma\Big)e_1+\tfrac\tau2\,e_2+\Big(\cosh2\sigma+\tfrac\tau2\sinh2\sigma\Big)e_3 ,
\]
harmonic for the Lorentz metric $\tau^2d\sigma^2-2\,d\sigma\,d\tau$ on the domain. That metric factors as $d\sigma\,(\tau^2d\sigma-2d\tau)$, so its null curves are $\{\sigma=\mathrm{const}\}$ and the solutions of $d\tau/d\sigma=\tau^2/2$, that is $\{-2/\tau-\sigma=\mathrm{const}\}$. Null coordinates $(x,y)$ are therefore given by $\sigma=y$ and $\tau=-2/(x+y)$, and in them
\begin{equation}
\label{eq:ak-nu}
\nu(x,y)=\Big(\!-\!\sinh 2y+\frac{\cosh 2y}{x+y}\Big)e_1-\frac{1}{x+y}\,e_2+\Big(\cosh 2y-\frac{\sinh 2y}{x+y}\Big)e_3
\end{equation}
on $\{x+y>0\}$. One checks directly that $\langle\nu,\nu\rangle=1$, using $\cosh^22y-\sinh^22y=1$, and that $\nu$ is harmonic. Writing $\nu_\tau=\tfrac12(-\cosh2y\,e_1+e_2+\sinh2y\,e_3)$ for the $\tau$-derivative, one finds $\langle\nu_\tau,\nu_\tau\rangle=0$, and since $\sigma=y$ does not involve $x$ we have $\nu_x=\frac{2}{(x+y)^2}\nu_\tau$; a short computation in the coordinates $(\sigma,\tau)$, where $\langle\nu_\sigma,\nu_\sigma\rangle=\tau^2-4$ and $\langle\nu_\sigma,\nu_\tau\rangle=-1$, then gives
\begin{equation}
\label{eq:ak-gram}
\langle\nu_x,\nu_x\rangle=0,\qquad \langle\nu_y,\nu_y\rangle=-4,\qquad \langle\nu_x,\nu_y\rangle=-\frac{2}{(x+y)^2} .
\end{equation}
The Gram determinant is $-4/(x+y)^4\ne0$, so $\nu_x$ and $\nu_y$ are linearly independent and $\nu$ is an immersion. Proposition~\ref{prop:cgc-converse} therefore applies: for every $\rho\ne0,\pm1$, integrating~\eqref{eq:cgc-alpha} produces a timelike immersion of $\{x+y>0\}$ into $\Hyp$ with constant extrinsic curvature $K+1=\rho^2$ and Gauss map~\eqref{eq:ak-nu}, with $(x,y)$ asymptotic.

Here everything is explicit. In the coordinates $(\sigma,\tau)$ one has $\nu=\Ad_Pe_3$ with $P=\exp(\sigma e_2)\exp(\tfrac\tau2\xi_2)$, and in the null coordinates $P^{-1}P_x=\tfrac{\tau^2}4\xi_2$ and $P^{-1}P_y=e_2+\tfrac\tau2e_3$. So, in the notation~\eqref{eq:frame-split}, $\beta_0=0$, $\beta_1=\tfrac{\tau^2}4\xi_2$, $\gamma_0=\tfrac\tau2e_3$ and $\gamma_{-1}=e_2$. One checks directly that
\[
\hat F=\exp\big(\lambda^{-1}y\,e_2\big)\exp\big(\tfrac{\lambda\tau}2\,\xi_2\big),\qquad \tau=-\frac2{x+y},
\]
satisfies $\hat F^{-1}d\hat F=\hat\alpha^\lambda$, so it is an extended frame of $\nu$. Proposition~\ref{prop:cgc-sym} then gives the surfaces in closed form: for $\mu\ne0,\pm1$,
\[
g^\mu=\exp\big(\mu^{-1}y\,e_2\big)\Big(\Id+\tfrac{(\mu-1)\tau}2\,\xi_2\Big)\exp(-y\,e_2),
\]
with $\exp(ce_2)=\cosh c\,\Id+\sinh c\,e_2$. In terms of $\rho=(1+\mu)/(1-\mu)$,
\[
g^\mu(x,y)=\exp\Big(-\frac{1+\rho}{1-\rho}\,y\,e_2\Big)\Big(\Id+\frac{2}{(1+\rho)(x+y)}\,\xi_2\Big)\exp(-y\,e_2).
\]
These surfaces are very special, however. Their mean curvature is the constant $H=\rho$, so $H^2=K+1$ and the shape operator has a double eigenvalue. Since $\langle f_x,f_x\rangle=0\ne\langle f_y,f_y\rangle$, the shape operator is a nontrivial Jordan block (at an umbilic point $II$ would be proportional to $I$, which with $L=\mathcal N=0$ would force $\langle f_y,f_y\rangle=0$). So they are isoparametric: they are the surfaces with nondiagonalizable shape operator mentioned in the introduction.

The second point concerns the causal character of the derivatives. By~\eqref{eq:ak-gram} neither null direction is spacelike: the $x$-direction is null and the $y$-direction timelike. No change of null coordinates alters this, since such a change multiplies $\langle\nu_x,\nu_x\rangle$ and $\langle\nu_y,\nu_y\rangle$ by positive factors. The extended frame above shows that this is no obstruction to the loop group description: Lemma~\ref{lem:cgc-mc}, Proposition~\ref{prop:cgc-sym} and Theorem~\ref{thm:cauchy} require no causal condition on $\nu_x$ or $\nu_y$.
\end{example}

\begin{remark}
\label{rem:integration-cost}
In both constructions the surface is recovered from its own extended frame by the same algebraic evaluation: Proposition~\ref{prop:sym-bobenko} on the constant mean curvature side and Proposition~\ref{prop:cgc-sym} on the constant Gaussian curvature side. As written, the two frames look different: a constant mean curvature frame is built in null coordinates for the first fundamental form, with the spectral parameter attached to $A^1\xi_1$ and $B^2\xi_2$ only, while a constant Gaussian curvature frame is $\sigma$-twisted. The next section shows that the difference is only one of normalization: after the substitution $\lambda=\zeta^2$ and a constant gauge, a constant mean curvature extended frame is an extended frame of its Gauss map in the sense of \S\ref{sec:cgc-cauchy}, and both classes of surfaces, in both curvature branches, are read off from it.
\end{remark}

\section{The relation between the two classes}
\label{sec:bridge}

Sections~\ref{sec:cmc} and~\ref{sec:cgc} produced two loop group constructions which, as written, look unrelated: they are adapted to different conformal structures on $\Omega$, their frames are normalized differently, and each recovers only surfaces of its own class. We first recall the geometric relation between the two classes, parallel displacement (\S\ref{sec:cgc-parallel}), and write it on the extended frame (\S\ref{sec:bridge-identity}). We then show that a constant mean curvature extended frame is, up to normalization, an extended frame of its Gauss map in the sense of \S\ref{sec:cgc-cauchy} (Lemma~\ref{lem:cmc-twisted}). This yields constant Gaussian curvature surfaces of both curvature branches from a single constant mean curvature frame (Theorem~\ref{thm:bridge}), and it solves the geometric Cauchy problem for constant mean curvature surfaces (\S\ref{sec:cauchy-cmc}).

\subsection{Parallel surfaces}
\label{sec:cgc-parallel}

The classical relation between constant mean curvature and constant Gaussian curvature surfaces --- in Euclidean space, Bonnet's theorem: every surface of nonzero constant mean curvature is parallel to one of constant positive curvature --- is known in the Riemannian space forms $M^3(\bar K)$ of curvature $\bar K=0,\pm1$, together with the curvatures of the whole parallel family, in~\cite[Chap.~I, Prop.~3.2 and Cor.~3.4]{Tenenblat98}. What follows is the counterpart for timelike surfaces in $\Hyp$, whose induced metric is Lorentzian and whose ambient space is not Riemannian; the statements turn out to be formally the same as in the Riemannian case of curvature $-1$, as Remark~\ref{rem:tenenblat} records. It is the geometric fact underlying the rest of this section, and we prove it in the form in which it is used below. We first record an elementary matrix identity.

\begin{proposition}
\label{prop:matrix-identity}
Let $A$ be a $2\times2$ matrix, let $\eta,\zeta\in\mathbb R$ and set $B=\eta A+\zeta\Id$. Then
\[
\det B=\eta^2\det A+\eta\zeta\operatorname{tr}A+\zeta^2,\qquad \operatorname{tr}B=\eta\operatorname{tr}A+2\zeta,
\]
and, if $\det B\ne0$,
\[
B^{-1}=\frac{\eta\bar A+\zeta\Id}{\det B},
\]
where $\bar A=(\operatorname{tr}A)\Id-A$ is the cofactor transpose.
\end{proposition}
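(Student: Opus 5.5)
The plan is to deduce all three formulas from two standard facts about $2\times2$ matrices: the Cayley--Hamilton identity $A^2-(\operatorname{tr}A)A+(\det A)\Id=0$, and the relation $A\bar A=\bar AA=(\det A)\Id$ for the cofactor transpose $\bar A=(\operatorname{tr}A)\Id-A$. The second fact is just a rewriting of the first.

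The trace formula follows from linearity of the trace together with $\operatorname{tr}\Id=2$.

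For the determinant, I will use the characteristic polynomial $\det(t\Id-A)=t^2-(\operatorname{tr}A)t+\det A$. If $\eta\ne0$, write $B=\eta\big(A+\tfrac\zeta\eta\Id\big)$, so that $\det B=\eta^2\det\big(A-(-\tfrac\zeta\eta)\Id\big)$. Evaluating the characteristic polynomial at $t=-\zeta/\eta$ gives $\det B=\eta^2\big(\tfrac{\zeta^2}{\eta^2}+\tfrac\zeta\eta\operatorname{tr}A+\det A\big)$, which is the stated expression. The case $\eta=0$ is immediate, since then $B=\zeta\Id$ and $\det B=\zeta^2$; alternatively it follows by continuity in $\eta$. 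One could instead expand $\det$ directly in entries, and I would mention that as a check.

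For the inverse, I will multiply out
\[
(\eta A+\zeta\Id)(\eta\bar A+\zeta\Id)=\eta^2A\bar A+\eta\zeta(A+\bar A)+\zeta^2\Id .
\]
Then I substitute $A\bar A=(\det A)\Id$ and $A+\bar A=(\operatorname{tr}A)\Id$, which gives $(\det B)\Id$ by the formula just proved. Dividing by $\det B\ne0$ yields $B^{-1}$, and since the two factors commute, this gives a two-sided inverse.

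There is no real obstacle here. The only point needing care is the degenerate case $\eta=0$ in the determinant formula, which I handle as above, and confirming that $\bar A$ as defined agrees with the cofactor transpose used in \S\ref{sec:prelim-liegroup}, which is a one-line check on entries.
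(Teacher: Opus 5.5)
Your proof is correct and complete. The trace formula is linearity of the trace. The determinant formula follows from the characteristic polynomial, where $\det(A-t\Id)=\det(t\Id-A)$ because the matrices are $2\times2$, and you treat $\eta=0$ separately. The inverse follows from $A\bar A=(\det A)\Id$ and $A+\bar A=(\operatorname{tr}A)\Id$.

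The paper states this identity without proof, as elementary, so there is no argument of its own to compare yours with. If you want to avoid the case split on $\eta$: the map $A\mapsto\bar A$ is linear and $\bar\Id=\Id$, so $\bar B=\eta\bar A+\zeta\Id$. Expanding $B\bar B=(\det B)\Id$ then gives the determinant formula and the inverse in one step.
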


Given a timelike immersion $f:\Omega\to\Hyp\subset\mathbb R^{2,2}$ with unit normal $N$ and $\theta\in\mathbb R$, define the \emph{parallel surface at distance $\theta$} by
\begin{equation}
\label{eq:parallel}
f^\theta=f\cosh\theta+N\sinh\theta .
\end{equation}
It takes values in $\Hyp$, since $\langle f^\theta,f^\theta\rangle=-\cosh^2\theta+\sinh^2\theta=-1$, and geometrically it moves each point a distance $\theta$ along the geodesic through it in the direction of $N$. As in the Euclidean case it need not be an immersion. Differentiating and using the shape operator,
\begin{equation}
\label{eq:parallel-derivatives}
\begin{pmatrix}f^\theta_x\\ f^\theta_y\end{pmatrix}=(\Id\cosh\theta+S\sinh\theta)\begin{pmatrix}f_x\\ f_y\end{pmatrix},
\end{equation}
so $f^\theta$ is an immersion exactly when $\Id\cosh\theta+S\sinh\theta$ is nonsingular, which by Proposition~\ref{prop:matrix-identity} and~\eqref{eq:gauss-general} means
\begin{equation}
\label{eq:parallel-nondeg}
\Delta_\theta:=\det(\Id\cosh\theta+S\sinh\theta)=K\sinh^2\theta-H\sinh(2\theta)+\cosh(2\theta)\ne0 .
\end{equation}
Assume this from now on. Then
\[
N^\theta=N\cosh\theta+f\sinh\theta
\]
is a unit normal for $f^\theta$. Unlike in the Euclidean case the normal itself changes, but the Gauss map does not: since $\nu$ lies in $\sltwo$ we have $\bar\nu=-\nu$ and $\det\nu=-\langle\nu,\nu\rangle=-1$, so $\nu^{-1}=\bar\nu/\det\nu=\nu$, and therefore
\begin{equation}
\label{eq:gauss-invariant}
\nu^\theta=(f^\theta)^{-1}N^\theta=(f^{-1}\cosh\theta-N^{-1}\sinh\theta)(f\sinh\theta+N\cosh\theta)=\nu\cosh^2\theta-\nu^{-1}\sinh^2\theta=\nu .
\end{equation}
This is the key point: a whole family of surfaces, of varying curvature, shares one Gauss map.

\begin{lemma}
\label{lem:parallel-curvatures}
Assume~\eqref{eq:parallel-nondeg}. The shape operator of $f^\theta$ is
\[
S^\theta=(S\cosh\theta+\Id\sinh\theta)(\Id\cosh\theta+S\sinh\theta)^{-1}
=\frac{S+\big[\tfrac12(K+2)\sinh 2\theta-2H\sinh^2\theta\big]\Id}{\Delta_\theta},
\]
and consequently
\[
K^\theta=\frac{K}{\Delta_\theta},\qquad
H^\theta=\frac{H\cosh(2\theta)-\tfrac12(K+2)\sinh(2\theta)}{\Delta_\theta} .
\]
\end{lemma}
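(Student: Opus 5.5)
The plan is to differentiate the parallel normal directly, read off $S^\theta$ as a rational expression in $S$, and then simplify using Proposition~\ref{prop:matrix-identity} and the Cayley--Hamilton identity for the $2\times2$ matrix $S$. Write $c=\cosh\theta$ and $s=\sinh\theta$.

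First, differentiate $N^\theta=cN+sf$. By the defining relation of $S$,
\[
\begin{pmatrix}N^\theta_x\\ N^\theta_y\end{pmatrix}=(cS+s\Id)\begin{pmatrix}f_x\\ f_y\end{pmatrix}.
\]
Under~\eqref{eq:parallel-nondeg}, the matrix $\Id c+Ss$ is invertible, so \eqref{eq:parallel-derivatives} expresses $(f_x,f_y)^T$ through $(f^\theta_x,f^\theta_y)^T$. Substituting gives
\[
\begin{pmatrix}N^\theta_x\\ N^\theta_y\end{pmatrix}=(cS+s\Id)(c\Id+sS)^{-1}\begin{pmatrix}f^\theta_x\\ f^\theta_y\end{pmatrix}.
\]
Since $N^\theta$ was already checked to be a unit normal of $f^\theta$, the matrix in front is $S^\theta$ by definition. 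Both factors are polynomials in $S$, so they commute, and the order of the factors is immaterial. This is the one point where the row-versus-column convention for $S$ must be watched, and commutativity removes the issue.

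Next, I would compute the inverse by Proposition~\ref{prop:matrix-identity} with $A=S$, $\eta=s$ and the scalar equal to $c$. Using $\operatorname{tr}S=-2H$, this gives $\bar S=-2H\Id-S$ and $(c\Id+sS)^{-1}=\big((c-2Hs)\Id-sS\big)/\Delta_\theta$. I would then expand the product $(cS+s\Id)\big((c-2Hs)\Id-sS\big)$ and eliminate $S^2$ by Cayley--Hamilton, $S^2=-2HS-(K+1)\Id$, where $\det S=K+1$ by~\eqref{eq:gauss-general}. In the expansion, the $S$-coefficient collapses to $c^2-s^2=1$, and the scalar coefficient becomes $cs(K+2)-2Hs^2=\tfrac12(K+2)\sinh2\theta-2H\sinh^2\theta$. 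This yields the stated formula for $S^\theta$.

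For the curvatures, the ambient space is still $\Hyp$, so \eqref{eq:gauss-general} applies to $f^\theta$: $K^\theta+1=\det S^\theta$ and $H^\theta=-\tfrac12\operatorname{tr}S^\theta$.
\begin{itemize}
\item For $K^\theta$, I would use multiplicativity, $\det S^\theta=\det(cS+s\Id)/\Delta_\theta$. Proposition~\ref{prop:matrix-identity} gives $\det(cS+s\Id)=c^2(K+1)-2Hcs+s^2$, and subtracting $\Delta_\theta=s^2K-2Hcs+c^2+s^2$ leaves $(c^2-s^2)K=K$.
\item For $H^\theta$, I would take the trace of the formula for $S^\theta$ and use $2\sinh^2\theta+1=\cosh2\theta$.
\end{itemize}

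I expect no real obstacle. The only delicate points are the sign bookkeeping in the inverse (the cofactor $\bar S$ together with $\operatorname{tr}S=-2H$) and confirming that the shape operator of $f^\theta$ is taken with respect to $N^\theta$ rather than $-N^\theta$, which fixes the signs in $H^\theta$.
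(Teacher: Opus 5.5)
Your proposal is correct and follows the paper's proof essentially step by step. Like the paper, you differentiate $N^\theta$ to get $S^\theta(\Id\cosh\theta+S\sinh\theta)=\Id\sinh\theta+S\cosh\theta$, invert with Proposition~\ref{prop:matrix-identity}, and simplify the numerator with Cayley--Hamilton; you then obtain $H^\theta$ from the trace and $K^\theta$ from the multiplicativity of the determinant.
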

\begin{proof}
Differentiating $N^\theta$ gives, exactly as in~\eqref{eq:parallel-derivatives},
\[
\begin{pmatrix}N^\theta_x\\ N^\theta_y\end{pmatrix}=(\Id\sinh\theta+S\cosh\theta)\begin{pmatrix}f_x\\ f_y\end{pmatrix},
\]
and substituting this and~\eqref{eq:parallel-derivatives} into the defining relation of $S^\theta$ yields the matrix identity $S^\theta(\Id\cosh\theta+S\sinh\theta)=\Id\sinh\theta+S\cosh\theta$, which is the first displayed formula. For the second, apply Proposition~\ref{prop:matrix-identity} with $A=S$, $\eta=\sinh\theta$, $\zeta=\cosh\theta$, so that
\[
(\Id\cosh\theta+S\sinh\theta)^{-1}=\frac{(\sinh\theta\operatorname{tr}S+\cosh\theta)\Id-\sinh\theta\,S}{\Delta_\theta},
\]
and expand the numerator of $S^\theta$, using the Cayley--Hamilton identity $S^2=(\operatorname{tr}S)S-(\det S)\Id$:
the terms in $S$ collect to $(\cosh^2\theta-\sinh^2\theta)S=S$, while the terms in $\Id$ collect to
\[
\big[\sinh\theta\cosh\theta(\det S+1)+\sinh^2\theta\operatorname{tr}S\big]\Id=\big[\tfrac12(K+2)\sinh2\theta-2H\sinh^2\theta\big]\Id,
\]
using $\det S=K+1$ and $\operatorname{tr}S=-2H$. Taking traces gives
\[
H^\theta=-\tfrac12\operatorname{tr}S^\theta=\frac{H(1+2\sinh^2\theta)-\tfrac12(K+2)\sinh2\theta}{\Delta_\theta}
=\frac{H\cosh2\theta-\tfrac12(K+2)\sinh 2\theta}{\Delta_\theta}.
\]
Taking determinants instead in $S^\theta(\Id\cosh\theta+S\sinh\theta)=\Id\sinh\theta+S\cosh\theta$ and using Proposition~\ref{prop:matrix-identity} twice,
\[
K^\theta+1=\det S^\theta=\frac{K\cosh^2\theta+\cosh2\theta-H\sinh2\theta}{\Delta_\theta},
\]
so that
\[
K^\theta=\frac{K\cosh^2\theta-K\sinh^2\theta}{\Delta_\theta}=\frac{K}{\Delta_\theta}. \qedhere
\]
\end{proof}

\begin{theorem}
\label{thm:parallel}
Let $f:\Omega\to\Hyp$ be a timelike immersion with mean curvature $H$ and Gaussian curvature $K$, and let $\theta\ne0$.
\begin{enumerate}\itemsep=0pt
\item[\rm(1)] If $f$ has constant mean curvature $H$ with $|H|>1$ --- which is exactly the condition for a real $\theta\ne0$ with $\tanh(2\theta)=1/H$ to exist, since $|\tanh|<1$ --- and $\theta$ is that value, then $\Delta_\theta=K\sinh^2\theta$, so $f^\theta$ is an immersion exactly at the points where $K\ne0$, and there it has constant Gaussian curvature
\[
K^\theta=\frac1{\sinh^2\theta}>0,\qquad\text{equivalently}\qquad K^\theta+1=\coth^2\theta .
\]
\item[\rm(2)] If $f$ has constant Gaussian curvature $K>0$ and $\theta$ satisfies $\sinh^2\theta=1/K$, then $\Delta_\theta=2\cosh\theta(\cosh\theta-H\sinh\theta)$, so $f^\theta$ is an immersion exactly where $H\ne\coth\theta$, and there it has constant mean curvature
\[
H^\theta=-\coth(2\theta).
\]
\end{enumerate}
In both cases $f^\theta$ has the same Gauss map as $f$.
\end{theorem}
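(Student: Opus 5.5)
The plan is to read everything off Lemma~\ref{lem:parallel-curvatures}, together with the nondegeneracy quantity $\Delta_\theta$ of~\eqref{eq:parallel-nondeg} and the invariance~\eqref{eq:gauss-invariant} of the Gauss map. Each part then reduces to substituting the prescribed relation between $\theta$ and the constant curvature into $\Delta_\theta$, $K^\theta=K/\Delta_\theta$ and $H^\theta$, and simplifying with double-angle identities. The final sentence, that $f^\theta$ has the same Gauss map, is~\eqref{eq:gauss-invariant} verbatim, valid wherever $f^\theta$ is an immersion.

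\emph{Part (1).} The existence claim is immediate: $\tanh$ is a bijection $\mathbb R\to(-1,1)$ with $\tanh(2\theta)=0$ only at $\theta=0$, so a real $\theta\ne0$ with $\tanh 2\theta=1/H$ exists iff $0<|1/H|<1$, that is $|H|>1$. With this $\theta$ we have $H\sinh 2\theta=\cosh 2\theta$, so the last two terms of $\Delta_\theta=K\sinh^2\theta-H\sinh(2\theta)+\cosh(2\theta)$ cancel and $\Delta_\theta=K\sinh^2\theta$. Since $\theta\ne0$, $f^\theta$ is an immersion exactly where $K\ne0$, and there Lemma~\ref{lem:parallel-curvatures} gives $K^\theta=K/(K\sinh^2\theta)=1/\sinh^2\theta$, equivalently $K^\theta+1=\coth^2\theta$. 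The curvature $K$ of $f$ itself need not be constant; it cancels pointwise, which is the point of the statement.

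\emph{Part (2).} With $K\sinh^2\theta=1$ one gets $\Delta_\theta=1+\cosh2\theta-H\sinh2\theta=2\cosh^2\theta-2H\sinh\theta\cosh\theta=2\cosh\theta(\cosh\theta-H\sinh\theta)$. Since $\cosh\theta>0$ and $\sinh\theta\ne0$, this vanishes exactly where $H=\coth\theta$. For $H^\theta$, substitute $K+2=1/\sinh^2\theta+2$ into the numerator $H\cosh2\theta-\tfrac12(K+2)\sinh2\theta$, so that
\[
\tfrac12(K+2)\sinh2\theta=\frac{\cosh\theta}{\sinh\theta}\bigl(1+2\sinh^2\theta\bigr)=\coth\theta\,\cosh2\theta .
\]
The numerator is then $\cosh2\theta\,(H-\coth\theta)$. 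The denominator can be rewritten as $-2\sinh\theta\cosh\theta\,(H-\coth\theta)=-\sinh2\theta\,(H-\coth\theta)$. Where $H\ne\coth\theta$ the common factor cancels, leaving $H^\theta=-\coth2\theta$, which is constant.

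There is no real obstacle here; this is bookkeeping on top of Lemma~\ref{lem:parallel-curvatures}. The only step needing care is the factorization in part (2): showing that numerator and denominator share the factor $H-\coth\theta$, which is what makes $H^\theta$ constant even though $H$ is not. I would do that sign and double-angle check explicitly rather than trust it.
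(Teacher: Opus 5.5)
Your proposal is correct and follows the paper's proof essentially line by line. Both parts come from substituting $\tanh2\theta=1/H$, respectively $K\sinh^2\theta=1$, into $\Delta_\theta$ and into the formulas of Lemma~\ref{lem:parallel-curvatures}, with the same identity $\tfrac12(K+2)\sinh2\theta=\coth\theta\cosh2\theta$ exposing the common factor $H-\coth\theta$ in part~(2), and the Gauss map statement is~\eqref{eq:gauss-invariant}.
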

\begin{proof}
(1) If $\tanh(2\theta)=1/H$ then $-H\sinh(2\theta)+\cosh(2\theta)=\cosh(2\theta)(1-H\tanh 2\theta)=0$, so~\eqref{eq:parallel-nondeg} reduces to $\Delta_\theta=K\sinh^2\theta$, which since $\theta\ne0$ is nonzero exactly where $K\ne0$. There, Lemma~\ref{lem:parallel-curvatures} gives $K^\theta=K/(K\sinh^2\theta)=1/\sinh^2\theta$, a constant, and $K^\theta+1=1+1/\sinh^2\theta=\coth^2\theta$.

(2) If $\sinh^2\theta=1/K$ then $K\sinh^2\theta=1$ and
\[
\Delta_\theta=1+\cosh2\theta-H\sinh2\theta=2\cosh^2\theta-2H\sinh\theta\cosh\theta=2\cosh\theta(\cosh\theta-H\sinh\theta),
\]
which vanishes exactly when $H=\coth\theta$. Elsewhere, using $K+2=(1+2\sinh^2\theta)/\sinh^2\theta=\cosh2\theta/\sinh^2\theta$,
\[
\tfrac12(K+2)\sinh2\theta=\frac{\cosh2\theta\cdot\sinh\theta\cosh\theta}{\sinh^2\theta}=\cosh2\theta\coth\theta,
\]
so by Lemma~\ref{lem:parallel-curvatures},
\[
H^\theta=\frac{\cosh2\theta\,(H-\coth\theta)}{2\cosh\theta(\cosh\theta-H\sinh\theta)}
=\frac{-\cosh2\theta\,(\cosh\theta-H\sinh\theta)}{2\sinh\theta\cosh\theta\,(\cosh\theta-H\sinh\theta)}=-\coth(2\theta),
\]
a constant. The statement about the Gauss map is~\eqref{eq:gauss-invariant}.
\end{proof}

Part~(1) of Theorem~\ref{thm:parallel} only produces positive curvatures, $K^\theta=1/\sinh^2\theta$. The next proposition shows that this is a feature of the correspondence and not of the proof: apart from isoparametric surfaces, a constant Gaussian curvature surface with $K\le0$ has no parallel surface of constant mean curvature, and one with $K>0$ has none other than those of part~(2).

\begin{proposition}
\label{prop:no-parallel-cmc}
Let $f:\Omega\to\Hyp$ be a timelike immersion of constant Gaussian curvature $K$ whose mean curvature is not constant on any nonempty open set, and let $\theta\ne0$ with $K\sinh^2\theta\ne1$. Then $f^\theta$ does not have constant mean curvature on any nonempty open set on which it is an immersion. In particular, if $K\le0$ no parallel surface of $f$ has constant mean curvature on an open set.
\end{proposition}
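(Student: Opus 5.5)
The plan is to read off $H^\theta$ from Lemma~\ref{lem:parallel-curvatures} and show it is a nonconstant Möbius function of $H$. On the open set where $f^\theta$ is an immersion we have $\Delta_\theta\ne0$, and there
\[
H^\theta=\frac{H\cosh(2\theta)-\tfrac12(K+2)\sinh(2\theta)}{K\sinh^2\theta+\cosh(2\theta)-H\sinh(2\theta)} .
\]
Since $K$ and $\theta$ are constant, this has the form $H^\theta=\dfrac{aH+b}{cH+d}$ with constants
\[
a=\cosh2\theta,\quad b=-\tfrac12(K+2)\sinh2\theta,\quad c=-\sinh2\theta,\quad d=K\sinh^2\theta+\cosh2\theta .
\]
A fractional linear map is injective wherever it is defined, hence locally nonconstant, as soon as $ad-bc\ne0$.

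The key step is therefore to compute this determinant. Using $\sinh^22\theta=4\sinh^2\theta\cosh^2\theta$,
\[
ad-bc=K\sinh^2\theta\cosh2\theta+\cosh^22\theta-\tfrac12(K+2)\sinh^22\theta .
\]
With $\cosh^22\theta-\sinh^22\theta=1$ this becomes
\[
1+K\sinh^2\theta\big(\cosh2\theta-2\cosh^2\theta\big)=1-K\sinh^2\theta .
\]
The hypothesis $K\sinh^2\theta\ne1$ is exactly the condition $ad-bc\ne0$. This identity is the main point to verify carefully; the rest is soft.

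Now suppose $H^\theta$ equals a constant $h$ on a nonempty open set $U$ on which $\Delta_\theta\ne0$. Then $aH+b=h(cH+d)$ there, that is $(a-hc)H=hd-b$. If $a-hc\ne0$, then $H$ is constant on $U$, which contradicts the hypothesis on $f$. If $a-hc=0$, then also $hd-b=0$, so $a=hc$ and $b=hd$. This gives $ad-bc=hcd-hdc=0$, a contradiction.

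For the final assertion, take $K\le0$. Then $K\sinh^2\theta\le0<1$, so the hypothesis $K\sinh^2\theta\ne1$ holds for every $\theta\ne0$. The case $\theta=0$ is just $f$ itself, which by assumption has nonconstant mean curvature on every open set. Hence no parallel surface has constant mean curvature on an open set.
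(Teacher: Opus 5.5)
Your proposal is correct and follows the paper's proof: you write $H^\theta$ from Lemma~\ref{lem:parallel-curvatures} as a fractional linear function of $H$ with the same constants, compute $ad-bc=1-K\sinh^2\theta\ne0$, and conclude from injectivity that $H$ would be constant wherever $H^\theta$ is. You spell out the degenerate case $a-hc=0$ and the trivial case $\theta=0$ more explicitly than the paper does, but the argument is the same.
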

\begin{proof}
By Lemma~\ref{lem:parallel-curvatures}, wherever $\Delta_\theta\ne0$,
\[
H^\theta=\frac{\alpha H+\beta}{\gamma H+\delta},
\]
where
\[
\alpha=\cosh2\theta,\quad \beta=-\tfrac12(K+2)\sinh2\theta,\quad \gamma=-\sinh2\theta,\quad \delta=K\sinh^2\theta+\cosh2\theta,
\]
with $\gamma H+\delta=\Delta_\theta$ by~\eqref{eq:parallel-nondeg}. Since $K$ is constant, so are $\alpha,\beta,\gamma,\delta$, and, using $\cosh^22\theta-\sinh^22\theta=1$ and $\cosh2\theta-2\cosh^2\theta=-1$,
\[
\alpha\delta-\beta\gamma=K\sinh^2\theta\cosh2\theta+\cosh^22\theta-\tfrac12(K+2)\sinh^22\theta=1-K\sinh^2\theta\ne0 .
\]
So $H\mapsto H^\theta$ is an injective fractional linear map, and if $H^\theta$ is constant on a connected open set then so is $H$, contrary to the hypothesis.
\end{proof}

Surfaces to which Proposition~\ref{prop:no-parallel-cmc} applies with $-1<K<0$ are given in Example~\ref{ex:cauchy}. Since a CMC surface and its parallel CGC surface are parallel to each other, the branch $-1<K<0$ cannot be reached from CMC surfaces by parallel displacement. Theorem~\ref{thm:bridge} below reaches it by a different operation, the passage to the polar surface (Remark~\ref{rem:bridge-parallel}).

\begin{remark}
\label{rem:tenenblat}
Theorem~\ref{thm:parallel} is formally identical to the classical statement in the Riemannian space form of curvature $-1$~\cite[Chap.~I, Prop.~3.2 and Cor.~3.4]{Tenenblat98}: setting $\bar K=-1$ and $c=H$ there returns the distances, hypotheses and curvatures of both parts; for instance, with $H=\coth2\theta$ the curvature $2(H^2-1)+2H\sqrt{H^2-1}$ given there equals $1/\sinh^2\theta$. The surfaces there are Riemannian, so Theorem~\ref{thm:parallel} does not follow from that statement; the agreement is an independent check on Lemma~\ref{lem:parallel-curvatures}.
\end{remark}

\subsection{The identity}
\label{sec:bridge-identity}

\begin{lemma}[parallel surfaces as evaluations of the frame]
\label{lem:bridge-identity}
Let $\hat F:\Omega\to G$ be a regular admissible extended frame, $F=\hat F|_{\lambda=1}$, and for $\mu\ne0,1$ let $f^\mu$ and $N^\mu$ be as in Proposition~\ref{prop:sym-bobenko}. Then for every $\theta\in\mathbb R$,
\begin{equation}
\label{eq:bridge}
f^\mu\cosh\theta+N^\mu\sinh\theta=\hat F|_{\lambda=\mu}\;\exp(\theta e_3)\;\hat F|_{\lambda=1}^{-1} .
\end{equation}
\end{lemma}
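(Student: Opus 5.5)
The identity is purely algebraic, so the plan is to substitute the definitions from Proposition~\ref{prop:sym-bobenko} and expand the exponential. Write $\mathcal G=\hat F|_{\lambda=\mu}$ and $F=\hat F|_{\lambda=1}$. By Proposition~\ref{prop:sym-bobenko} we have $f^\mu=\mathcal G F^{-1}$ and $N^\mu=\mathcal G e_3F^{-1}$. Factoring $\mathcal G$ on the left and $F^{-1}$ on the right, the left-hand side of \eqref{eq:bridge} becomes
\[
\mathcal G\big(\cosh\theta\,\Id+\sinh\theta\,e_3\big)F^{-1}.
\]
This holds because scalar multiplication commutes with matrix multiplication, and the sum is taken in $\mathfrak{gl}(2,\mathbb R)\cong\mathbb R^{2,2}$, where $f^\theta$ lives by \eqref{eq:parallel}.

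It then remains to identify the middle factor with $\exp(\theta e_3)$. Since $e_3=\operatorname{diag}(-1,1)$ satisfies $e_3^2=\Id$, the exponential series splits into even and odd parts:
\[
\exp(\theta e_3)=\sum_k\frac{\theta^{2k}}{(2k)!}\Id+\sum_k\frac{\theta^{2k+1}}{(2k+1)!}e_3=\cosh\theta\,\Id+\sinh\theta\,e_3 .
\]
Alternatively, one can simply note that $\exp(\theta e_3)=\operatorname{diag}(e^{-\theta},e^{\theta})$ and compare entries. Substituting this gives \eqref{eq:bridge}.

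There is no real obstacle. The only point to watch is that $N^\mu$ is indeed $\mathcal G e_3F^{-1}$ and not some other translate of $\Ad_Fe_3$. This is exactly what Proposition~\ref{prop:sym-bobenko} states, via $N=f\Ad_Fe_3=\mathcal G F^{-1}Fe_3F^{-1}$. Regularity and $\mu\ne0,1$ are needed only so that $f^\mu$ is an immersion with that normal; the matrix identity itself holds for every $\theta$ with no further hypothesis.
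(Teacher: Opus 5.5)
Your proposal is correct and matches the paper's proof: substitute $f^\mu=\hat F|_{\lambda=\mu}F^{-1}$ and $N^\mu=\hat F|_{\lambda=\mu}e_3F^{-1}$ from Proposition~\ref{prop:sym-bobenko}, factor out the two evaluations, and use $e_3^2=\Id$ to write $\exp(\theta e_3)=\cosh\theta\,\Id+\sinh\theta\,e_3$. You are also right that regularity and $\mu\ne0,1$ play no role in the identity itself.
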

\begin{proof}
Write $\mathcal G=\hat F|_{\lambda=\mu}$. By Proposition~\ref{prop:sym-bobenko}, $f^\mu=\mathcal GF^{-1}$ and $N^\mu=f^\mu\Ad_Fe_3=\mathcal Ge_3F^{-1}$, so
\[
f^\mu\cosh\theta+N^\mu\sinh\theta=\mathcal G\big(\Id\cosh\theta+e_3\sinh\theta\big)F^{-1} .
\]
Since $e_3^2=\Id$, the exponential series splits into even and odd parts and gives $\exp(\theta e_3)=\Id\cosh\theta+e_3\sinh\theta$, which is the claim.
\end{proof}

Since $\theta$ is so far unconstrained, Lemma~\ref{lem:bridge-identity} exhibits a whole one-parameter family, and it is worth computing it once and for all: everything in this section is a specialization of the following lemma to a particular value of the parameter.

\begin{lemma}[the interpolating family]
\label{lem:interpolation}
Let $\hat F$ be a regular admissible extended frame, $F=\hat F|_{\lambda=1}$, and $\mu\ne0,1$. For $t\in\mathbb R$ put
\[
g^t:=\hat F|_{\lambda=\mu}\,\exp(te_3)\,\hat F|_{\lambda=1}^{-1},
\]
so that $g^t=(f^\mu)^t$ by Lemma~\ref{lem:bridge-identity}. Then
\begin{align}
\label{eq:interp-x}
(g^t)^{-1}g^t_x&=\Ad_F\Big(A^1\big(\mu e^{-2t}-1\big)\xi_1+A^2\big(e^{2t}-1\big)\xi_2\Big),\\
\label{eq:interp-y}
(g^t)^{-1}g^t_y&=\Ad_F\Big(B^1\big(e^{-2t}-1\big)\xi_1+B^2\big(\mu^{-1}e^{2t}-1\big)\xi_2\Big),
\end{align}
and, writing $N^t=g^t\Ad_Fe_3$,
\begin{gather}
\label{eq:interp-EG}
\langle g^t_x,g^t_x\rangle=A^1A^2\big(\mu e^{-2t}-1\big)\big(e^{2t}-1\big),\qquad
\langle g^t_y,g^t_y\rangle=B^1B^2\big(e^{-2t}-1\big)\big(\mu^{-1}e^{2t}-1\big),\\
\label{eq:interp-LN}
\langle g^t_{xx},N^t\rangle=A^1A^2\big(e^{2t}-\mu e^{-2t}\big),\qquad
\langle g^t_{yy},N^t\rangle=B^1B^2\big(\mu^{-1}e^{2t}-e^{-2t}\big).
\end{gather}
Consequently, for $\mu>0$:
\begin{enumerate}\itemsep=0pt
\item[\rm(i)] if $A^1A^2B^1B^2\ne0$, the coordinates $(x,y)$ are null for $g^t$ exactly for $t=0$ and $t=\tfrac12\log\mu$;
\item[\rm(ii)] the coefficients $L$ and $\mathcal N$ of the second fundamental form of $g^t$ vanish at $t=\tfrac14\log\mu$, and, if $A^1A^2\ne0$ or $B^1B^2\ne0$, at no other value of $t$;
\item[\rm(iii)] at $t=\tfrac14\log\mu$, $g^t$ is an immersion if and only if $A^1B^2-A^2B^1\ne0$, and then $(x,y)$ are asymptotic coordinates for it.
\end{enumerate}
\end{lemma}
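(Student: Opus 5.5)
The plan is to compute the Maurer--Cartan form of $g^t$ directly, in the same way as in the proof of Proposition~\ref{prop:sym-bobenko}, and then read off all remaining statements by pairing in the basis $\{\xi_1,\xi_2,e_3\}$. Write $\mathcal G=\hat F|_{\lambda=\mu}$ and $E=\exp(te_3)$, so that $g^t=\mathcal G E F^{-1}$. Differentiating gives
\[
(g^t)^{-1}g^t_x=\Ad_F\big(\Ad_{E^{-1}}(\mathcal G^{-1}\mathcal G_x)-F^{-1}F_x\big).
\]
By~\eqref{eq:admissible} we have $\mathcal G^{-1}\mathcal G_x=A^1\mu\,\xi_1+A^2\xi_2+A^3e_3$. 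As in the proof of Lemma~\ref{lem:adapted-frame}, $\Ad_{\exp(-te_3)}$ multiplies $\xi_1$ by $e^{-2t}$ and $\xi_2$ by $e^{2t}$, and it fixes $e_3$. Subtracting $F^{-1}F_x=A^1\xi_1+A^2\xi_2+A^3e_3$ then cancels the $e_3$-terms and yields~\eqref{eq:interp-x}. The $y$-side, with $\mathcal G^{-1}\mathcal G_y=B^1\xi_1+\mu^{-1}B^2\xi_2+B^3e_3$, gives~\eqref{eq:interp-y} in the same way.

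For~\eqref{eq:interp-EG} I will use that $\Ad_F$ is an isometry, together with $\langle a\xi_1+b\xi_2,a\xi_1+b\xi_2\rangle=ab$, which holds because $\xi_1,\xi_2$ are null and $\langle\xi_1,\xi_2\rangle=\tfrac12$. Both tangent vectors lie in $\Ad_F\operatorname{span}(\xi_1,\xi_2)$, which is orthogonal to $\Ad_Fe_3$. Hence $N^t=g^t\Ad_Fe_3$ is a unit normal wherever $g^t$ is an immersion.

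For~\eqref{eq:interp-LN} I will write $u=(g^t)^{-1}g^t_x=\Ad_F(a\xi_1+b\xi_2)$, with $a=A^1(\mu e^{-2t}-1)$ and $b=A^2(e^{2t}-1)$. As in the proof of Lemma~\ref{lem:frame-coeffs}, $(g^t)^{-1}g^t_{xx}=u^2+u_x$. By~\eqref{eq:xi-products}, $u^2=ab\,\Id$, and this is orthogonal to $\sltwo$. In $u_x$ only the bracket term $\Ad_F[F^{-1}F_x,a\xi_1+b\xi_2]$ has an $e_3$-component, and by~\eqref{eq:xi-brackets} that component is $A^1b-A^2a$. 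Substituting $a$ and $b$ gives $A^1A^2(e^{2t}-\mu e^{-2t})$. The $yy$-coefficient follows symmetrically, with $e_3$-component $B^1b'-B^2a'$ for the coefficients $a',b'$ of~\eqref{eq:interp-y}.

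The consequences (i)--(iii) are then read off from these formulas. For (i), with $\mu>0$ and $A^1A^2B^1B^2\ne0$, the quantities $\langle g^t_x,g^t_x\rangle$ and $\langle g^t_y,g^t_y\rangle$ both vanish only when $e^{2t}=1$ or $e^{2t}=\mu$. For (ii), both second-fundamental-form coefficients are proportional to $e^{4t}-\mu$, which vanishes exactly when $t=\tfrac14\log\mu$. For (iii), at $t=\tfrac14\log\mu$ the two tangent vectors become $(\sqrt\mu-1)\Ad_F(A^1\xi_1+A^2\xi_2)$ and $(\mu^{-1/2}-1)\Ad_F(B^1\xi_1+B^2\xi_2)$. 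Since $\mu\ne1$, they are independent exactly when $A^1B^2-A^2B^1\ne0$. At that value $L=\mathcal N=0$, so $(x,y)$ are asymptotic coordinates.

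The only delicate point is bookkeeping: getting the signs of the conjugation factors right and making sure the $\Id$-terms drop out. For that reason I would check~\eqref{eq:interp-LN} at $t=0$ against Lemma~\ref{lem:frame-coeffs} and Proposition~\ref{prop:sym-bobenko}.
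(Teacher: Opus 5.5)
Your proposal is correct and follows the paper's proof essentially step by step. You use the same conjugation formula for $(g^t)^{-1}g^t_x$, the same observation that $u^2=ab\,\Id$ drops out, the same $e_3$-component $A^1b-A^2a$ for $L$, and the same reading-off of (i) and (ii). The one divergence is in (iii): you check directly that the two tangent vectors are linearly independent, which is shorter and enough for the statement, since independence makes the tangent plane the Lorentzian plane $\Ad_F(e_3^\perp)$. The paper instead computes $\det I=-\tfrac14(m-1)^2(m^{-1}-1)^2D^2$ and $M=\tfrac12D(m^{-1}-1)(m+1)$, and it reuses these right after Theorem~\ref{thm:bridge} to get $K+1=-M^2/\det I$.
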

\begin{proof}
Write $E=\exp(te_3)$ and $\mathcal G=\hat F|_{\lambda=\mu}$, so $g^t=\mathcal GEF^{-1}$ and, exactly as in the proof of Proposition~\ref{prop:sym-bobenko},
\[
(g^t)^{-1}g^t_x=\Ad_F\big(\Ad_{E^{-1}}(\mathcal G^{-1}\mathcal G_x)-F^{-1}F_x\big).
\]
By~\eqref{eq:xi-brackets}, $\Ad_{\exp(-te_3)}\xi_1=e^{-2t}\xi_1$ and $\Ad_{\exp(-te_3)}\xi_2=e^{2t}\xi_2$, while $e_3$ is fixed; since $\mathcal G^{-1}\mathcal G_x=A^1\mu\,\xi_1+A^2\xi_2+A^3e_3$,
\[
\Ad_{E^{-1}}(\mathcal G^{-1}\mathcal G_x)=A^1\mu e^{-2t}\xi_1+A^2e^{2t}\xi_2+A^3e_3,
\]
and subtracting $F^{-1}F_x=A^1\xi_1+A^2\xi_2+A^3e_3$ gives~\eqref{eq:interp-x}; the $y$-side is identical, starting from $\mathcal G^{-1}\mathcal G_y=B^1\xi_1+B^2\mu^{-1}\xi_2+B^3e_3$.

Writing $(g^t)^{-1}g^t_x=\Ad_F(a\xi_1+b\xi_2)$ with $a,b$ as in~\eqref{eq:interp-x}, the first identity in~\eqref{eq:interp-EG} is $\langle a\xi_1+b\xi_2,a\xi_1+b\xi_2\rangle=2ab\langle\xi_1,\xi_2\rangle=ab$, and the second is the same computation. For~\eqref{eq:interp-LN}, put $u=(g^t)^{-1}g^t_x$; then $\langle g^t_{xx},N^t\rangle$ is the $e_3$-component of $\Ad_F^{-1}(u^2+u_x)$, and $u^2=\langle u,u\rangle\Id$ is orthogonal to $\sltwo$ by~\eqref{eq:naive-second-derivatives}, so only $u_x$ contributes. Its $e_3$-component comes from
\[
[F^{-1}F_x,a\xi_1+b\xi_2]=a\big(2A^3\xi_1-A^2e_3\big)+b\big(A^1e_3-2A^3\xi_2\big),
\]
and equals $bA^1-aA^2=A^1A^2\big[(e^{2t}-1)-(\mu e^{-2t}-1)\big]=A^1A^2(e^{2t}-\mu e^{-2t})$, which is the first identity in~\eqref{eq:interp-LN}; the second is the same computation on the $y$-side.

Finally, under the stated nonvanishing assumptions the right-hand sides of~\eqref{eq:interp-EG} vanish simultaneously exactly when $e^{2t}=1$ or $e^{2t}=\mu$; those of~\eqref{eq:interp-LN} both vanish when $e^{4t}=\mu$, and a nonzero factor $A^1A^2$ or $B^1B^2$ forces $e^{4t}=\mu$. At $e^{4t}=\mu$, writing $m=e^{2t}$ and $D=A^1B^2-A^2B^1$, the coefficients in~\eqref{eq:interp-x}--\eqref{eq:interp-y} are $A^1(m-1)$, $A^2(m-1)$, $B^1(m^{-1}-1)$ and $B^2(m^{-1}-1)$; the same computation as for~\eqref{eq:interp-LN}, applied to $M=\langle v_x+\tfrac12[u,v],N^t\rangle$ with $v=(g^t)^{-1}g^t_y$, gives $M=\tfrac12D(m^{-1}-1)(m+1)$, and $\det I=-\tfrac14(m-1)^2(m^{-1}-1)^2D^2$ follows from $pq-\tfrac14(p+q)^2=-\tfrac14(p-q)^2$ with $p=A^1B^2$, $q=A^2B^1$. Both vanish exactly when $D=0$.
\end{proof}

By the proof of Proposition~\ref{prop:degenerate-locus}, which uses only the frame, the condition $A^1B^2-A^2B^1\ne0$ in~(iii) says that the Gauss map $\nu=\Ad_Fe_3$ is an immersion. The flat family of \S\ref{sec:cmc-example}, where $A^1=A^2=B^1=B^2=-\tfrac12$, violates it everywhere.

The content of Lemma~\ref{lem:bridge-identity} is not the computation, which is immediate, but the observation that the parallel surface of $f^\mu$ --- an object defined through the geometry of $\Hyp$, involving the normal and the exponential map of the ambient space --- is again of Sym--Bobenko type, with the fixed element $\exp(\theta e_3)$ of the one-parameter subgroup generated by $e_3$ inserted between the two evaluations of the frame. Note also that $\theta$ and $\mu$ are so far independent. Tying them together is what produces constant curvature.

\subsection{The main theorem}

\begin{lemma}[the constant mean curvature frame as a twisted frame]
\label{lem:cmc-twisted}
Let $\hat F$ be an admissible extended frame as in~\eqref{eq:admissible}, $F=\hat F|_{\lambda=1}$ and $\nu=\Ad_Fe_3$. For $\zeta\in\mathbb R\setminus\{0\}$ put
\[
G(\zeta)=\hat F|_{\lambda=\zeta^2}\,\exp\big(\tfrac12\log\zeta\;e_3\big)\quad(\zeta>0),\qquad
G(\zeta)=e_3\,\hat F|_{\lambda=\zeta^2}\,e_3\,\exp\big(\tfrac12\log|\zeta|\;e_3\big)\quad(\zeta<0).
\]
Then $G(\zeta)$ takes values in $\SLR$, $G(1)=F$, and
\[
G^{-1}dG=\big(A^3e_3+\zeta(A^1\xi_1+A^2\xi_2)\big)\,dx+\big(B^3e_3+\zeta^{-1}(B^1\xi_1+B^2\xi_2)\big)\,dy .
\]
Thus, up to a left factor constant on $\Omega$, $\zeta\mapsto G(\zeta)$ is an extended frame of the harmonic map $\nu$ in the sense of \S\ref{sec:cgc-cauchy}, in the null coordinates $(x,y)$ of the first fundamental form, with $\beta_0=A^3e_3$, $\beta_1=A^1\xi_1+A^2\xi_2$, $\gamma_0=B^3e_3$ and $\gamma_{-1}=B^1\xi_1+B^2\xi_2$.
\end{lemma}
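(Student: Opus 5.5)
The plan is a direct computation of $G^{-1}dG$ via the gauge formula for a constant right factor. First, for $\zeta>0$, write $D=\exp(\tfrac12\log\zeta\,e_3)$; it is constant on $\Omega$, so
\[
G^{-1}dG=\Ad_{D^{-1}}\big(\hat F^{-1}d\hat F\big|_{\lambda=\zeta^2}\big).
\]
By \eqref{eq:xi-brackets}, $\Ad_{\exp(-se_3)}\xi_1=e^{-2s}\xi_1$ and $\Ad_{\exp(-se_3)}\xi_2=e^{2s}\xi_2$, while $e_3$ is fixed. With $s=\tfrac12\log\zeta$ these factors are $\zeta^{-1}$ and $\zeta$.

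Inserting \eqref{eq:admissible} at $\lambda=\zeta^2$, the $dx$-part $A^1\zeta^2\xi_1+A^2\xi_2+A^3e_3$ becomes $A^1\zeta\xi_1+A^2\zeta\xi_2+A^3e_3$. The $dy$-part $B^1\xi_1+B^2\zeta^{-2}\xi_2+B^3e_3$ becomes $B^1\zeta^{-1}\xi_1+B^2\zeta^{-1}\xi_2+B^3e_3$. This is the claimed form. Both factors have determinant one, so $G$ is $\SLR$-valued (real since $\zeta^2>0$), and $G(1)=\hat F|_{\lambda=1}=F$.

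Second, for $\zeta<0$, note that $e_3\in GL(2,\mathbb R)$ has $\det e_3=-1$ and $e_3^2=\Id$. Hence $e_3\hat F e_3$ has determinant one and conjugation by $e_3$ is an automorphism. Since $\Ad_{e_3}=\exp(\tfrac{i\pi}2\operatorname{ad}_{e_3})$ formally, one checks directly from the matrices that $\Ad_{e_3}$ fixes $e_3$ and negates $\xi_1,\xi_2$. Then
\[
(e_3\hat Fe_3)^{-1}d(e_3\hat Fe_3)=\Ad_{e_3}\big(\hat F^{-1}d\hat F\big),
\]
which negates the off-diagonal parts. Applying the $\exp(\tfrac12\log|\zeta|e_3)$ gauge as before gives coefficients $-|\zeta|=\zeta$ on $A^1\xi_1+A^2\xi_2$ and $-|\zeta|^{-1}=\zeta^{-1}$ on $B^1\xi_1+B^2\xi_2$. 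So the formula holds in both cases.

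Finally, compare with \eqref{eq:cgc-extended}. The form obtained is exactly $\hat\alpha^\zeta$ for the splitting \eqref{eq:frame-split} of $F^{-1}dF$ given by Lemma~\ref{lem:frame-coeffs}, namely $\beta_0=A^3e_3$, $\beta_1=A^1\xi_1+A^2\xi_2\in\mathfrak p$, $\gamma_0=B^3e_3$, $\gamma_{-1}=B^1\xi_1+B^2\xi_2$. The frame $F$ satisfies $\nu=\Ad_Fe_3$. The map $\nu$ is harmonic by Theorem~\ref{thm:cmc-harmonic}, and flatness for all $\zeta$ is automatic since $G^{-1}dG$ is a Maurer--Cartan form. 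The extended frame of \S\ref{sec:cgc-cauchy} is defined by integrating $\hat\alpha^\zeta$ with value $F$ at $\zeta=1$. By uniqueness of solutions of $\Phi^{-1}d\Phi=\hat\alpha^\zeta$, $G(\zeta)$ agrees with it up to a left factor constant on $\Omega$, depending on $\zeta$, fixed by the value at a base point.

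The only delicate point, and the main obstacle, is the $\zeta<0$ branch. There one must justify that conjugation by the determinant $-1$ matrix $e_3$ is compatible with the real form and preserves $\SLR$. One must also make the sign bookkeeping $-|\zeta|=\zeta$ come out exactly, including on the $\zeta^{-1}$ side.
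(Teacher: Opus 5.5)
Your proposal is correct and follows the paper's proof essentially step by step. You rescale $\xi_1,\xi_2$ by $|\zeta|^{\mp1}$ through the constant right gauge $\Ad_{\exp(-\frac12\log|\zeta|\,e_3)}$, supply the sign for $\zeta<0$ by $\Ad_{e_3}\xi_i=-\xi_i$ (the $\zeta^{-1}$ side included), and take harmonicity of $\nu$ from Theorem~\ref{thm:cmc-harmonic}; the $\zeta<0$ point you flag as delicate is already settled by your determinant and reality check.
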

\begin{proof}
Conjugation by $e_3$, which has determinant $-1$, preserves $\SLR$, and a constant left factor does not change the Maurer--Cartan form. For a constant right factor $C$, the Maurer--Cartan form becomes $\Ad_{C^{-1}}$ of the old one. With $s=\tfrac12\log|\zeta|$, by~\eqref{eq:xi-brackets} the map $\Ad_{\exp(-se_3)}$ fixes $e_3$ and multiplies $\xi_1$ by $e^{-2s}=|\zeta|^{-1}$ and $\xi_2$ by $|\zeta|$, and $\Ad_{e_3}$ changes the signs of $\xi_1$ and $\xi_2$. Applied to~\eqref{eq:admissible} at $\lambda=\zeta^2$, the terms $\zeta^2A^1\xi_1$, $A^2\xi_2$, $B^1\xi_1$ and $\zeta^{-2}B^2\xi_2$ become $\zeta A^1\xi_1$, $\zeta A^2\xi_2$, $\zeta^{-1}B^1\xi_1$ and $\zeta^{-1}B^2\xi_2$, the sign of $\zeta$ coming from $\Ad_{e_3}$ when $\zeta<0$. The map $\nu$ is harmonic in $(x,y)$ by Theorem~\ref{thm:cmc-harmonic}, and $\beta_0,\gamma_0\in\mathfrak k$ and $\beta_1,\gamma_{-1}\in\mathfrak p$, which is the shape~\eqref{eq:cgc-extended}.
\end{proof}

\begin{theorem}
\label{thm:bridge}
Let $\hat F:\Omega\to G$ be a regular admissible extended frame, $F=\hat F|_{\lambda=1}$ and $\nu=\Ad_Fe_3$, and let $\theta\ne0$. At every point where $\nu$ is an immersion, the maps
\begin{equation}
\label{eq:bridge-formula}
g^\theta_+=\hat F|_{\lambda=e^{4\theta}}\;\exp(\theta e_3)\;\hat F|_{\lambda=1}^{-1},\qquad
g^\theta_-=e_3\,\hat F|_{\lambda=e^{4\theta}}\,e_3\;\exp(\theta e_3)\;\hat F|_{\lambda=1}^{-1}
\end{equation}
are timelike immersions into $\Hyp$ with Gauss map $\nu$ and with $(x,y)$ as asymptotic coordinates, of constant Gaussian curvature
\[
K_+=\frac1{\sinh^2\theta}>0\qquad\text{and}\qquad K_-=-\frac1{\cosh^2\theta}\in(-1,0)
\]
respectively. Every $K\in(-1,0)\cup(0,\infty)$ arises in this way.
\end{theorem}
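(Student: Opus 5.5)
The plan is to reduce both formulas to Proposition~\ref{prop:cgc-sym} by way of Lemma~\ref{lem:cmc-twisted}. Put $\zeta_\pm=\pm e^{2\theta}$, so that $\zeta_\pm^2=e^{4\theta}$ and $\tfrac12\log|\zeta_\pm|=\theta$.

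By Lemma~\ref{lem:cmc-twisted}, $G(\zeta_+)=\hat F|_{\lambda=e^{4\theta}}\exp(\theta e_3)$ and $G(\zeta_-)=e_3\hat F|_{\lambda=e^{4\theta}}e_3\exp(\theta e_3)$. In both cases $G(1)=F$. Hence
\[
g^\theta_\pm=G(\zeta_\pm)\,G(1)^{-1}.
\]
The family $G$ is, up to a constant left factor, an extended frame of the harmonic map $\nu$ in the sense of \S\ref{sec:cgc-cauchy}, and the null coordinates are $(x,y)$. A constant left factor $C$ would spoil the evaluation formula. So I will first check that in the lemma's normalization $G(1)=F$ there is no left factor at all: the lemma's formula gives $G$ directly, and a left constant only matters for matching with a chosen initial value. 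The proof of Proposition~\ref{prop:cgc-sym} in fact uses only the Maurer--Cartan form, namely $(g^\mu)^{-1}dg^\mu=\Ad_F(\hat\alpha^\mu-\hat\alpha^1)$. I will rerun that computation with $G$ in place of $\hat F$ and $\mu=\zeta_\pm$. Here $\nu$ is assumed to be an immersion, so the proposition applies locally at the points in question, that is, on the open set where $A^1B^2-A^2B^1\ne0$ (Proposition~\ref{prop:degenerate-locus}).

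Proposition~\ref{prop:cgc-sym} then says that $g^\theta_\pm$ is a timelike immersion with Gauss map $\nu$ and asymptotic coordinates $(x,y)$. It also gives $K+1=\rho^2$ with
\[
\rho=\frac{1+\zeta}{1-\zeta}.
\]
The hypothesis $\zeta\ne0,\pm1$ holds because $\theta\ne0$. For $\zeta_+=e^{2\theta}$ we get $\rho=-\coth\theta$, so $K_+=\coth^2\theta-1=1/\sinh^2\theta$. For $\zeta_-=-e^{2\theta}$ we get $\rho=-\tanh\theta$, so $K_-=\tanh^2\theta-1=-1/\cosh^2\theta$. As a consistency check, the result for $g^\theta_+$ agrees with Theorem~\ref{thm:parallel}(1) and Lemma~\ref{lem:bridge-identity}. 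The timelike CMC surface $f^\mu$ with $\mu=e^{4\theta}$ has $H=\coth 2\theta$, so $\tanh 2\theta=1/H$, and the parallel surface at distance $\theta$ has curvature $1/\sinh^2\theta$. Lemma~\ref{lem:interpolation}(iii) likewise confirms the asymptotic coordinates.

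Finally, the range statement follows from elementary calculus. As $\theta$ runs over $\mathbb R\setminus\{0\}$, $1/\sinh^2\theta$ covers $(0,\infty)$ and $-1/\cosh^2\theta$ covers $(-1,0)$.

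The main obstacle I expect is the bookkeeping in Lemma~\ref{lem:cmc-twisted} for $\zeta<0$. One has to confirm that conjugating by $e_3$ (which has determinant $-1$) keeps $\SLR$ and produces exactly the sign $\zeta$, not $|\zeta|$, on the $\mathfrak p$-terms. This is where the polar branch comes from. If it works, the difference $\hat\alpha^{\zeta}-\hat\alpha^{1}=(\zeta-1)\beta_1dx+(\zeta^{-1}-1)\gamma_{-1}dy$ enters Proposition~\ref{prop:cgc-sym} unchanged.
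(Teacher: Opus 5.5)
Your proposal is correct and is essentially the paper's proof: Lemma~\ref{lem:cmc-twisted} gives $g^\theta_\pm=G(\zeta)G(1)^{-1}$ with $\zeta=\pm e^{2\theta}$. Proposition~\ref{prop:cgc-sym} with $\mu=\zeta$ then gives $\rho=-\coth\theta$ and $\rho=-\tanh\theta$ respectively, and hence $K_+=1/\sinh^2\theta$ and $K_-=-1/\cosh^2\theta$. Your concern about a constant left factor is unnecessary. Here $G(1)=F$, and a $\zeta$-dependent constant left factor would only left-translate the surface, leaving the curvature and the Gauss map unchanged.
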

\begin{proof}
By Lemma~\ref{lem:cmc-twisted}, $g^\theta_\pm=G(\zeta)G(1)^{-1}$ with $\zeta=\pm e^{2\theta}$, where $G$ is an extended frame of the harmonic immersion $\nu$. Since $\theta\ne0$ we have $\zeta\ne0,\pm1$, and Proposition~\ref{prop:cgc-sym} with $\mu=\zeta$ applies. It gives a timelike immersion with Gauss map $\nu$ and asymptotic coordinates $(x,y)$, with $K+1=\rho^2$ and $\rho=(1+\zeta)/(1-\zeta)$. For $\zeta=e^{2\theta}$ this is $\rho=-\coth\theta$, so $K=\coth^2\theta-1=1/\sinh^2\theta$. For $\zeta=-e^{2\theta}$ it is $\rho=-\tanh\theta$, so $K=\tanh^2\theta-1=-1/\cosh^2\theta$. As $\theta$ ranges over $\mathbb R\setminus\{0\}$, these take every value in $(0,\infty)$ and in $(-1,0)$ respectively.
\end{proof}

We write $g^\theta=g^\theta_+$ where no confusion can arise. The positive branch can also be obtained from Lemma~\ref{lem:interpolation} alone. At $t=\theta$, with $m=e^{2\theta}$ and $D=A^1B^2-A^2B^1$, the coefficients are $E=A^1A^2(m-1)^2$, $G=B^1B^2(m^{-1}-1)^2$, $L=\mathcal N=0$ and $M=\tfrac12D(m^{-1}-1)(m+1)$, with $\det I=-\tfrac14(m-1)^2(m^{-1}-1)^2D^2$. Hence $K+1=-M^2/\det I=\coth^2\theta$.

\begin{remark}[the geometric meaning of the formula]
\label{rem:bridge-parallel}
The formula has a classical meaning, which also explains the choice $\mu=e^{4\theta}$. By Lemma~\ref{lem:bridge-identity}, $g^\theta=(f^\mu)^\theta$ is the parallel surface at distance $\theta$ of the constant mean curvature immersion $f^\mu=\hat F|_{\lambda=\mu}F^{-1}$ of Proposition~\ref{prop:sym-bobenko}. The mean curvature of $f^\mu$ is
\[
H=\frac{\mu+1}{\mu-1}=\frac{e^{2\theta}+e^{-2\theta}}{e^{2\theta}-e^{-2\theta}}=\coth(2\theta),
\]
so the tie $\mu=e^{4\theta}$ is exactly the condition $\tanh(2\theta)=1/H$ of Theorem~\ref{thm:parallel}(1), with $|H|>1$. The Gauss map of $f^\mu$ is $\nu$: by Proposition~\ref{prop:sym-bobenko} its unit normal is $f^\mu\Ad_Fe_3$, and its adapted frame in the sense of Lemma~\ref{lem:adapted-frame} is $F\exp(\tau e_3)$ for a suitable $\tau$, which gives the same $\Ad_{F\exp(\tau e_3)}e_3=\nu$. Proposition~\ref{prop:degenerate-locus}, applied to $f^\mu$, therefore shows that $K^\mu\ne0$ exactly where $\nu$ is an immersion. Theorem~\ref{thm:parallel}(1) then gives a second proof of the curvature statement for $g^\theta_+$. In this reading, the positive branch of Theorem~\ref{thm:bridge} is the parallel-surface correspondence of Theorem~\ref{thm:parallel}, expressed on the extended frame.

The negative branch has a classical meaning as well. Since $e_3\exp(\theta e_3)=\sinh\theta\,\Id+\cosh\theta\,e_3$, the computation of Lemma~\ref{lem:bridge-identity} gives
\[
g^\theta_-=e_3\big(f^\mu\sinh\theta+N^\mu\cosh\theta\big)=e_3\,N^\theta ,
\]
where $N^\theta=N^\mu\cosh\theta+f^\mu\sinh\theta$ is the unit normal of the parallel surface $g^\theta_+=(f^\mu)^\theta$ of~\eqref{eq:parallel}. The normal $N^\theta$ takes values in the quadric $\{\langle X,X\rangle=1\}$ of $\mathbb R^{2,2}$. Left multiplication by $e_3$, which has determinant $-1$, carries that quadric onto $\Hyp$ and reverses the sign of the metric. So $g^\theta_-$ is the polar surface of $g^\theta_+$, carried into $\Hyp$ by this anti-isometry, and the extrinsic curvatures satisfy $(K_++1)(K_-+1)=\coth^2\theta\tanh^2\theta=1$, as for polar surfaces in $S^3$. By Proposition~\ref{prop:no-parallel-cmc}, $g^\theta_-$ is not a parallel surface of any constant mean curvature surface unless its mean curvature is constant.\end{remark}

\begin{remark}[the degenerate locus]
\label{rem:degenerate}
The hypothesis that $\nu$ be an immersion is not removable. Where it fails, $f^\mu$ is flat by Proposition~\ref{prop:degenerate-locus}, so the matrix $\Id\cosh\theta+S\sinh\theta$ is singular for the value of $\theta$ prescribed by $\tanh(2\theta)=1/H$, and $g^\theta$ fails to be an immersion there. The degenerate locus is therefore not an artifact of the construction: it is exactly the set where $\nu$ fails to be an immersion, which is the standing hypothesis of the whole of Section~\ref{sec:cgc} and was seen there to be a genuine restriction. In this sense the formula is defined exactly where the constant Gaussian curvature theory it lands in is itself defined. The locus depends only on $\nu$: not on $\mu$, not on $\theta$, and not on which member of the associated family one starts from. By Proposition~\ref{prop:constant-potential-flat} the flat example of \S\ref{sec:cmc-example} lies entirely on this locus, as does every example whose frame has constant coefficients; Example~\ref{ex:sinh-gordon} gives one that does not.
\end{remark}

\begin{remark}[an explicit family on which the theorem applies]
\label{rem:bridge-example}
By Proposition~\ref{prop:constant-potential-flat}, no extended frame with constant coefficients satisfies the hypothesis of Theorem~\ref{thm:bridge}. The family of Example~\ref{ex:sinh-gordon} satisfies it at every point, since there $A^1B^2-A^2B^1=\tfrac12\omega_{xy}\ne0$ on all of $\{x<y\}$. For each $\theta\ne0$, Theorem~\ref{thm:bridge} yields on all of $\{x<y\}$ a timelike immersion $g^\theta$ of constant Gaussian curvature $1/\sinh^2\theta$, in asymptotic coordinates $(x,y)$ and with Gauss map $\nu$. Since $\hat F$ is explicit, so are $g^\theta$ and all the members $f^\mu$ of the associated family: they are elementary functions of $x$ and $y$. Since $A^2B^1=-e^{-\omega}QR\ne0$, Corollary~\ref{cor:bridge-cmc} applies everywhere as well. The mean curvature of $g^\theta$ is not constant. Otherwise $g^\theta$ would be isoparametric, and so, by Lemma~\ref{lem:parallel-curvatures}, would be its parallel surface $f^{e^{4\theta}}$. But, by Example~\ref{ex:sinh-gordon}, the Gaussian curvature of $f^\mu$ is not constant. The same applies to $g^\theta_-$: it gives explicit timelike immersions of constant Gaussian curvature $-1/\cosh^2\theta$, whose mean curvature is not constant either, as the explicit formula shows.
\end{remark}

The value $t=\theta$ is not the only distinguished one. The curvature of $g^\theta$ satisfies $\sinh^2\theta=1/K^\theta$, so by Theorem~\ref{thm:parallel}(2) its parallel surface at distance $\theta$ has constant mean curvature; and since parallel displacements compose, that surface is $g^{2\theta}$. The next corollary records this at the level of the frame. The surface $g^{2\theta}$ is again a two-point evaluation of $\hat F$, now with $\exp(2\theta e_3)$ inserted, in which the roles of the two null directions are interchanged. Its nondegeneracy condition is not $K\ne0$ but the nonvanishing of both Hopf differentials.

\begin{corollary}
\label{cor:bridge-cmc}
Let $\hat F$ and $\theta$ be as in Theorem~\ref{thm:bridge} and $\mu=e^{4\theta}$. At every point where $A^2\ne0$ and $B^1\ne0$ --- equivalently, where both Hopf differentials $Q^\mu=-A^1A^2(\mu-1)$ and $R^\mu=B^1B^2(\mu^{-1}-1)$ of the immersion $f^\mu$ of Proposition~\ref{prop:sym-bobenko} are nonzero --- the map
\begin{equation}
\label{eq:bridge-cmc-formula}
g^{2\theta}=\hat F|_{\lambda=e^{4\theta}}\;\exp(2\theta e_3)\;\hat F|_{\lambda=1}^{-1}
\end{equation}
is a timelike immersion of constant mean curvature
\[
H^{2\theta}=-\coth(2\theta)=-\frac{\mu+1}{\mu-1},
\]
with the same Gauss map $\nu=\Ad_Fe_3$, and
\[
(g^{2\theta})^{-1}g^{2\theta}_x=A^2(\mu-1)\,\Ad_F\xi_2,\qquad
(g^{2\theta})^{-1}g^{2\theta}_y=B^1(\mu^{-1}-1)\,\Ad_F\xi_1 .
\]
\end{corollary}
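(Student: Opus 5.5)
Set $\mu=e^{4\theta}$, so that $e^{2t}=\mu^{1/2}\cdot e^{2t-2\theta}$. The plan is to specialize Lemma~\ref{lem:interpolation} to $t=2\theta$, i.e.\ $e^{2t}=\mu$, and then to mimic the proof of Proposition~\ref{prop:sym-bobenko} with the roles of $\xi_1,\xi_2$ interchanged.

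At $e^{2t}=\mu$, formula~\eqref{eq:interp-x} gives the coefficient $A^1(\mu\mu^{-1}-1)=0$ of $\xi_1$ and $A^2(\mu-1)$ of $\xi_2$. Likewise~\eqref{eq:interp-y} gives $B^1(\mu^{-1}-1)$ of $\xi_1$ and $B^2(\mu^{-1}\mu-1)=0$ of $\xi_2$. These are exactly the two displayed frame identities. Put $p'=A^2(\mu-1)$ and $q'=B^1(\mu^{-1}-1)$. Since $\xi_1,\xi_2$ are null, $(x,y)$ are null coordinates for $g^{2\theta}$, which is Lemma~\ref{lem:interpolation}(i) at $t=\frac12\log\mu$. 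Moreover $\langle g_x,g_y\rangle=\tfrac12p'q'$, and this is nonzero exactly where $A^2B^1\ne0$, because $\mu\ne1$ ($\theta\ne0$). So $g^{2\theta}$ is a timelike immersion there. The normal $N=g\Ad_Fe_3$ is unit and orthogonal to both derivatives, so the Gauss map is $\Ad_Fe_3=\nu$. The equivalence with the nonvanishing of the Hopf differentials of $f^\mu$ is immediate from the formulas $Q^\mu=-pA^2$ and $R^\mu=qB^1$ computed in \S\ref{sec:cmc-example}, together with regularity ($A^1,B^2\ne0$).

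For the mean curvature, I compute $\langle g_{xy},N\rangle$ via $g^{-1}g_{xy}=u_y+vu$, where $u=p'\Ad_F\xi_2$ and $v=q'\Ad_F\xi_1$. Differentiating $u$ in $y$ gives $\Ad_F[F^{-1}F_y,\xi_2]$, and this contributes the $e_3$-component $p'B^1$ through $[B^1\xi_1,\xi_2]=B^1e_3$. Next, $vu=p'q'\Ad_F(\xi_1\xi_2)=\tfrac{p'q'}2(\Id+\Ad_Fe_3)$, which contributes $\tfrac12p'q'$. Hence
\[
\langle g_{xy},N\rangle=p'B^1+\tfrac12p'q'=p'B^1\big(1+\tfrac12(\mu^{-1}-1)\big)=p'B^1\,\frac{\mu^{-1}+1}2,
\]
and
\[
H=\frac{2\langle g_{xy},N\rangle}{p'q'}=\frac{\mu^{-1}+1}{\mu^{-1}-1}=-\frac{\mu+1}{\mu-1}=-\coth 2\theta .
\]
Here I take care that the definition of $H$ in \S\ref{sec:cmc-frame} uses $\varepsilon e^\omega=2\langle g_x,g_y\rangle=p'q'$, with the null coordinate order $x$ first. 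The mean curvature is independent of that order, since $H$ is a trace.

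The main obstacle is only bookkeeping: the adapted-frame convention of Lemma~\ref{lem:adapted-frame} assigns $\xi_1$ to the $x$-direction, and here it is swapped. I avoid invoking that lemma and compute $H$ directly as above, which needs no frame normalization. As a cross-check I will verify the same value through the other expansion $v_x+uv$, and also against Theorem~\ref{thm:parallel}(2) applied to $g^\theta$ with $K^\theta\sinh^2\theta=1$, using that parallel displacements compose: $(g^\theta)^\theta=g^{2\theta}$ by Lemma~\ref{lem:bridge-identity} and $\exp(\theta e_3)^2=\exp(2\theta e_3)$.
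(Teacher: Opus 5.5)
Your proposal is correct and follows the paper's proof: you specialize Lemma~\ref{lem:interpolation} to $t=2\theta$, read off the two frame identities and the nullness of $(x,y)$, and note that $2\langle g_x,g_y\rangle=p'q'=A^2B^1(\mu-1)(\mu^{-1}-1)$. You then compute $H=2\langle g_{xy},N\rangle/(p'q')$. The only difference is that you use the expansion $u_y+vu$, which gives $p'B^1+\tfrac12p'q'$, whereas the paper uses $v'_x+u'v'$, which gives $-q'A^2-\tfrac12p'q'$; both lead to $-(\mu+1)/(\mu-1)$.
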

\begin{proof}
Put $t=2\theta$ in Lemma~\ref{lem:interpolation}, so that $e^{2t}=e^{4\theta}=\mu$. The coefficients of $\xi_1$ in~\eqref{eq:interp-x} and of $\xi_2$ in~\eqref{eq:interp-y} then vanish, leaving the two displayed identities, with $p'=A^2(\mu-1)$ and $q'=B^1(\mu^{-1}-1)$ in place of the $p,q$ of Proposition~\ref{prop:sym-bobenko} and with the roles of $\xi_1$ and $\xi_2$ interchanged. By~\eqref{eq:interp-EG} the coordinates are again null, and
\[
2\langle g^{2\theta}_x,g^{2\theta}_y\rangle=2p'q'\langle\xi_2,\xi_1\rangle=p'q'=A^2B^1(\mu-1)(\mu^{-1}-1),
\]
which is nonzero precisely under the stated hypothesis, so $g^{2\theta}$ is a nondegenerate timelike immersion; that $N=g^{2\theta}\Ad_Fe_3$ is its unit normal follows as in Proposition~\ref{prop:sym-bobenko}. For the mean curvature, expand $(g^{2\theta})^{-1}g^{2\theta}_{xy}=v'_x+u'v'$ with $u'=p'\Ad_F\xi_2$ and $v'=q'\Ad_F\xi_1$. Using $[F^{-1}F_x,\xi_1]=2A^3\xi_1-A^2e_3$ and the matrix product $\xi_2\xi_1=\tfrac12(\Id-e_3)$ from~\eqref{eq:xi-products},
\[
v'_x=\Ad_F\big((\partial_xq'+2q'A^3)\xi_1-q'A^2e_3\big),\qquad u'v'=\tfrac{p'q'}2\big(\Id-\Ad_Fe_3\big),
\]
so pairing with $N$ leaves $\langle g^{2\theta}_{xy},N\rangle=-q'A^2-\tfrac{p'q'}2$, and therefore
\[
H^{2\theta}=\frac{2\langle g^{2\theta}_{xy},N\rangle}{p'q'}=-\frac{2A^2}{p'}-1=-\frac{2}{\mu-1}-1=-\frac{\mu+1}{\mu-1},
\]
a constant. \end{proof}

\begin{remark}[the three distinguished values, and their nondegeneracy conditions]
\label{rem:three-values}
Fix $\mu=e^{4\theta}$ with $\theta\ne0$ and let $t$ run. By Lemma~\ref{lem:interpolation} the family $g^t=(f^\mu)^t$ is governed by the four coefficients $A^1,A^2,B^1,B^2$, and three values of $t$ are distinguished:
\[
\begin{array}{llll}
t=0: & \text{only }A^1,B^2\text{ survive}, & (x,y)\text{ null}, & \text{CMC with }H=\coth 2\theta;\\
t=\theta: & \text{no factor vanishes}, & (x,y)\text{ asymptotic}, & \text{CGC with }K=1/\sinh^2\theta;\\
t=2\theta: & \text{only }A^2,B^1\text{ survive}, & (x,y)\text{ null}, & \text{CMC with }H=-\coth 2\theta.
\end{array}
\]
The two constant mean curvature members are exchanged by $t\mapsto2\theta-t$, which also reverses the sign of~\eqref{eq:interp-LN}, and the constant Gaussian curvature member sits at the fixed point of that involution. Geometrically, since $(g^\theta)^s=g^{\theta+s}$, the involution is reflection through the constant Gaussian curvature surface along its normal geodesics. The surfaces $f^\mu=(g^\theta)^{-\theta}$ and $g^{2\theta}=(g^\theta)^{\theta}$ are its two parallel surfaces at distance $\theta$, one on either side. They have opposite constant mean curvatures $\pm\coth2\theta$, and all three share the Gauss map $\nu$. This is the counterpart of the classical pair of constant mean curvature surfaces parallel to a surface of constant positive curvature~\cite[Chap.~I, Prop.~3.2 and Cor.~3.4]{Tenenblat98}. Their nondegeneracy conditions are genuinely different: $t=0$ needs $A^1B^2\ne0$, which is regularity of the frame and is assumed throughout; $t=2\theta$ needs $A^2B^1\ne0$, that is, both Hopf differentials nonzero; and $t=\theta$ needs $A^1B^2-A^2B^1\ne0$, that is $K\ne0$, by Proposition~\ref{prop:degenerate-locus}. The last two are not independent, and they fail in opposite directions: under regularity, $A^1B^2=A^2B^1$ forces $A^2B^1=A^1B^2\ne0$, so
\[
K=0\ \Longrightarrow\ A^2B^1\ne0 .
\]
Wherever Theorem~\ref{thm:bridge} fails, therefore, Corollary~\ref{cor:bridge-cmc} applies. In particular the flat example of \S\ref{sec:cmc-example}, which by Proposition~\ref{prop:constant-potential-flat} is excluded from Theorem~\ref{thm:bridge} at every point, satisfies the hypothesis of Corollary~\ref{cor:bridge-cmc} everywhere, since there $A^2B^1=\tfrac14$.
\end{remark}

The parameters are related by the chain
\begin{equation}
\label{eq:parameter-chain}
\mu=e^{4\theta}
\;\longrightarrow\;
H=\coth(2\theta)
\;\longrightarrow\;
\rho=-\coth\theta
\;\longrightarrow\;
K^\theta=\rho^2-1=\frac1{\sinh^2\theta},
\end{equation}
where $\rho$ is the parameter of Proposition~\ref{prop:cgc-converse}. Every step is an explicit elementary function of the previous one, and each is invertible on its range: from a desired curvature $K^\theta>0$ one recovers $\sinh^2\theta=1/K^\theta$, then $\theta$ up to sign, then the spectral parameter $\mu=e^{4\theta}$ at which the CMC frame must be evaluated. For the negative branch of Theorem~\ref{thm:bridge} the chain ends instead with $\rho=-\tanh\theta$ and $K=-1/\cosh^2\theta$.

\subsection{The geometric Cauchy problem for constant mean curvature surfaces}
\label{sec:cauchy-cmc}

Lemma~\ref{lem:cmc-twisted} also solves the geometric Cauchy problem on the constant mean curvature side, by the same two steps as Corollary~\ref{cor:geometric-cauchy}. For $\tilde\nu\in\Sph$ let $P_\pm$ denote the projections of $\tilde\nu^\perp$ onto the two null lines on which $\tfrac12\operatorname{ad}_{\tilde\nu}$ acts by $\pm1$; these exist by Proposition~\ref{prop:bracket-identities}(v).

\begin{corollary}
\label{cor:cauchy-cmc}
Let $I\subset\mathbb R$ be an open interval, and let $\tilde f:I\to\Hyp$ and $\tilde\nu:I\to\Sph$ satisfy $\langle\tilde f^{-1}\tilde f',\tilde\nu\rangle=0$, with $\tilde f'$ nowhere null. Let $H$ be a constant with $|H|>1$, let $\theta$ be defined by $\tanh(2\theta)=1/H$, and put $\mu=e^{4\theta}$ and
\[
N_1=-\frac2{\mu-1}\,P_+\big(\tilde f^{-1}\tilde f'\big)+P_-\big(\tilde\nu'\big)-\frac2{\mu^{-1}-1}\,P_-\big(\tilde f^{-1}\tilde f'\big).
\]
Let $\hat F$ be the extended frame of Theorem~\ref{thm:cauchy} for the data $N_0=\tilde\nu$ and $N_1$, with frame $F$ along $I$. Then, on a neighbourhood $\Omega$ of the diagonal,
\[
f=L\,\hat F|_{\lambda=e^{2\theta}}\,\exp(-\theta e_3)\,\hat F|_{\lambda=1}^{-1},\qquad L=\tilde f(x_0)F(x_0)\exp(\theta e_3)F(x_0)^{-1},
\]
is a timelike immersion of constant mean curvature $H$, with Gauss map $\nu=\Ad_{\hat F|_{\lambda=1}}e_3$ and with $(x,y)$ as null coordinates for its first fundamental form, and
\[
f(x,x)=\tilde f(x),\qquad \nu(x,x)=\tilde\nu(x)\qquad(x\in I).
\]
\end{corollary}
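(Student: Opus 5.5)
The plan is to run Lemma~\ref{lem:cmc-twisted} backwards. Let $\hat F$ be the $\sigma$-twisted extended frame produced by Theorem~\ref{thm:cauchy}, with spectral parameter $\zeta$ in the role of the twisted parameter. By Lemma~\ref{lem:cgc-mc} its Maurer--Cartan form is $(\beta_0+\zeta\beta_1)\,dx+(\gamma_0+\zeta^{-1}\gamma_{-1})\,dy$ with $\beta_0,\gamma_0\in\mathfrak k$ and $\beta_1,\gamma_{-1}\in\mathfrak p$. I write
\[
\beta_1=A^1\xi_1+A^2\xi_2,\qquad \gamma_{-1}=B^1\xi_1+B^2\xi_2,\qquad \beta_0=A^3e_3,\qquad \gamma_0=B^3e_3 .
\]
For $\lambda>0$ I define
\[
\check F(\lambda):=\hat F|_{\zeta=\sqrt\lambda}\,\exp\big(-\tfrac14\log\lambda\,e_3\big).
\]
The computation in the proof of Lemma~\ref{lem:cmc-twisted}, read in reverse, shows that
\[
\check F^{-1}d\check F=(A^1\lambda\xi_1+A^2\xi_2+A^3e_3)\,dx+(B^1\xi_1+B^2\lambda^{-1}\xi_2+B^3e_3)\,dy .
\]
This is the admissible shape~\eqref{eq:admissible}, with $\check F|_{\lambda=1}=\hat F|_{\zeta=1}$. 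Harmonicity of $\nu$ is not used in this step; flatness for all $\lambda$ is inherited from flatness for all $\zeta$.

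Next I apply Proposition~\ref{prop:sym-bobenko} to $\check F$ at $\lambda=\mu=e^{4\theta}$. Here $\check F|_{\lambda=\mu}=\hat F|_{\zeta=e^{2\theta}}\exp(-\theta e_3)$, so
\[
\check F|_{\lambda=\mu}\,\check F|_{\lambda=1}^{-1}=\hat F|_{\zeta=e^{2\theta}}\exp(-\theta e_3)\,\hat F|_{\zeta=1}^{-1} .
\]
This is $f$ up to the constant left factor $L$. Proposition~\ref{prop:sym-bobenko} then gives that $f$ is timelike with $(x,y)$ null, has Gauss map $\nu$, and has mean curvature $(\mu+1)/(\mu-1)=\coth 2\theta=H$. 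The hypothesis needed there is regularity, $A^1\ne0$ and $B^2\ne0$. I will check it on the diagonal and then shrink $\Omega$ by openness. The same proposition also gives
\[
f^{-1}f_x=(\mu-1)A^1\,\Ad_F\xi_1,\qquad f^{-1}f_y=(\mu^{-1}-1)B^2\,\Ad_F\xi_2 .
\]

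The initial conditions form the main step, and this is where the formula for $N_1$ is used. By~\eqref{eq:xi-brackets}, $\tfrac12\operatorname{ad}_\nu$ acts by $+1$ on $\Ad_F\xi_1$ and by $-1$ on $\Ad_F\xi_2$, so these two lines are the images of $P_+$ and $P_-$. From~\eqref{eq:nu-derivs},
\[
\nu_x=\Ad_F(-2A^1\xi_1+2A^2\xi_2),\qquad \nu_y=\Ad_F(-2B^1\xi_1+2B^2\xi_2),
\]
hence $P_+(\nu_x)=-2A^1\Ad_F\xi_1$ and $P_-(\nu_y)=2B^2\Ad_F\xi_2$.

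Write $w=\tilde f^{-1}\tilde f'$. On the diagonal $\nu_x=N_1$, so the definition of $N_1$ gives
\[
(\mu-1)A^1\Ad_F\xi_1=-\tfrac{\mu-1}2P_+(N_1)=P_+(w).
\]
Also on the diagonal $\nu_y=\tilde\nu'-N_1$, so
\[
(\mu^{-1}-1)B^2\Ad_F\xi_2=\tfrac{\mu^{-1}-1}2\big(P_-(\tilde\nu')-P_-(N_1)\big)=P_-(w).
\]
Adding the two, $f^{-1}f_t=P_+(w)+P_-(w)=w=\tilde f^{-1}\tilde f'$ along the diagonal. Before that, one checks that $\langle N_1,\tilde\nu\rangle=0$, so that Theorem~\ref{thm:cauchy} applies; this holds because every term of $N_1$ lies in $\tilde\nu^\perp$. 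The same theorem gives $\nu(x,x)=\tilde\nu(x)$.

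The choice of $L$ together with $\hat F(x_0,x_0)=F(x_0)$ gives $f(x_0,x_0)=\tilde f(x_0)$. Uniqueness for the ODE $f^{-1}f_t=w$ along the diagonal then yields $f(x,x)=\tilde f(x)$. Finally, $A^1$ and $B^2$ are nonzero on the diagonal exactly when $P_+(w)$ and $P_-(w)$ are both nonzero. Since $w$ is spacelike or timelike but never null, i.e.\ $\tilde f'$ is nowhere null, this holds, and regularity near the diagonal follows by openness.

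I expect the main obstacle to be bookkeeping rather than conceptual. First, one must make sure that the $\zeta$-frame of Theorem~\ref{thm:cauchy}, which is built via a Birkhoff factorization in $G_\sigma$, genuinely has the form~\eqref{eq:cgc-extended} at every $\zeta$ and not only $\zeta$ near $1$. Second, the sign conventions in the identification of $P_\pm$ with $\Ad_F\xi_{1,2}$ must match the coefficients $-2/(\mu-1)$ and $-2/(\mu^{-1}-1)$ in $N_1$ exactly. I will check those signs first, on the example of \S\ref{sec:cmc-example}.
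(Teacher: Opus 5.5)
Your proposal is correct and follows essentially the same route as the paper. It undoes Lemma~\ref{lem:cmc-twisted} via $\lambda=\zeta^2$ and the gauge $\exp(-\tfrac14\log\lambda\,e_3)$, applies Proposition~\ref{prop:sym-bobenko} at $\lambda=\mu$, matches the diagonal data through $P_+\nu_x=-2A^1\Ad_F\xi_1$ and $P_-\nu_y=2B^2\Ad_F\xi_2$, and gets regularity from the non-nullness of $\tilde f'$. Your explicit check that $N_1\perp\tilde\nu$, which Theorem~\ref{thm:cauchy} needs, is a point the paper leaves implicit.
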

\begin{proof}
Write $G=\hat F$, and define $A^i,B^i$ by $\beta_0=A^3e_3$, $\beta_1=A^1\xi_1+A^2\xi_2$, $\gamma_0=B^3e_3$ and $\gamma_{-1}=B^1\xi_1+B^2\xi_2$ in its Maurer--Cartan form~\eqref{eq:cgc-extended}. Reading the proof of Lemma~\ref{lem:cmc-twisted} backwards, $\hat F_c(\lambda):=G(\sqrt\lambda)\exp(-\tfrac14\log\lambda\;e_3)$, for $\lambda>0$, has a Maurer--Cartan form of the shape~\eqref{eq:admissible} with these coefficients. The proof of Proposition~\ref{prop:sym-bobenko} uses only that shape and the regularity $A^1B^2\ne0$. So wherever $A^1B^2\ne0$, the map $\hat F_c(\mu)\hat F_c(1)^{-1}=G(e^{2\theta})\exp(-\theta e_3)G(1)^{-1}$ is a timelike immersion with null coordinates $(x,y)$, Gauss map $\nu$ and constant mean curvature $(\mu+1)/(\mu-1)=\coth2\theta=H$. Left translation by $L$ changes none of this.

On the diagonal, $\nu=\tilde\nu$ and $\nu_x=N_1$ by Theorem~\ref{thm:cauchy}, so $\nu_y=\tilde\nu'-N_1$. By~\eqref{eq:nu-derivs} and~\eqref{eq:xi-brackets}, $\nu_x=\Ad_F(-2A^1\xi_1+2A^2\xi_2)$ and $\nu_y=\Ad_F(-2B^1\xi_1+2B^2\xi_2)$, and $\Ad_F\xi_1$, $\Ad_F\xi_2$ span the lines on which $\tfrac12\operatorname{ad}_\nu$ acts by $+1$ and $-1$. Hence $P_+\nu_x=-2A^1\Ad_F\xi_1$ and $P_-\nu_y=2B^2\Ad_F\xi_2$, and by Proposition~\ref{prop:sym-bobenko}
\[
f^{-1}f_x=A^1(\mu-1)\Ad_F\xi_1=-\tfrac12(\mu-1)P_+\nu_x,\qquad f^{-1}f_y=B^2(\mu^{-1}-1)\Ad_F\xi_2=\tfrac12(\mu^{-1}-1)P_-\nu_y .
\]
By the definition of $N_1$, $P_+\nu_x=-\tfrac2{\mu-1}P_+(\tilde f^{-1}\tilde f')$ and $P_-\nu_y=P_-\tilde\nu'-P_-N_1=\tfrac2{\mu^{-1}-1}P_-(\tilde f^{-1}\tilde f')$. Both are nonzero because $\tilde f'$ is not null, so $A^1B^2\ne0$ on the diagonal, and hence on a neighbourhood of it. Moreover, along the diagonal
\[
f^{-1}f_t=f^{-1}f_x+f^{-1}f_y=P_+(\tilde f^{-1}\tilde f')+P_-(\tilde f^{-1}\tilde f')=\tilde f^{-1}\tilde f',
\]
since $\tilde f^{-1}\tilde f'\perp\tilde\nu$. Finally $\hat F|_\lambda(x_0,x_0)=F(x_0)$ for every $\lambda$, so that $f(x_0,x_0)=\tilde f(x_0)$ by the choice of $L$. Hence $f$ and $\tilde f$ agree along the diagonal.
\end{proof}

Unlike Corollary~\ref{cor:geometric-cauchy}, this requires neither that $\nu$ be an immersion nor any passage through parallel surfaces: the constant mean curvature surface may be flat. The case $|H|<1$ is also covered, at the cost of one integration. The coefficients $A^i,B^i$ above define the one-form~\eqref{eq:admissible} for every $\lambda\ne0$. It satisfies the Maurer--Cartan equation for $\lambda>0$, hence for all $\lambda\ne0$, because that equation is polynomial in $\lambda$ and $\lambda^{-1}$. Integrating it at $\lambda=\mu=(H+1)/(H-1)<0$, and applying Proposition~\ref{prop:sym-bobenko} and the computation above, gives the solution for $|H|<1$. What is lost is only the closed form: $\hat F_c(\mu)$ for $\mu<0$ is not an evaluation of $G$ at a real value of $\zeta$.

\begin{remark}[the flat limit]
\label{rem:flat-limit}
The constructions above degenerate to the flat ones recalled in the introduction. For $\mu>1$ put $c=2/(\mu-1)$ and $\Psi_c(X)=c(X-\Id)$. Since $\det X=1$ on $\Hyp$ and $\langle X,\Id\rangle=-\tfrac12\operatorname{tr}X$, the map $\Psi_c$ carries $\Hyp$ homothetically, with ratio $c$, onto the quadric $\langle Y+c\Id,Y+c\Id\rangle=-c^2$. This is a Lorentzian space form of curvature $-1/c^2=-(\mu-1)^2/4$, and it converges to the hyperplane $\sltwo\cong\mathbb R^{2,1}$ as $\mu\to1$. A homothety of ratio $c$ divides mean curvatures by $c$ and Gaussian curvatures by $c^2$. So, with $\mu=e^{4\theta}$, the images of $f^\mu$, $g^\theta$ and $g^{2\theta}$ have constant curvatures
\[
H=\tfrac{\mu+1}2,\qquad K=\frac{(\mu-1)^2}{4\sinh^2\theta}=4\mu\cosh^2\theta,\qquad H=-\tfrac{\mu+1}2 ,
\]
using $e^{4\theta}-1=2e^{2\theta}\sinh2\theta$. These tend to $1$, $4$ and $-1$. This is the configuration of~\cite[\S4]{Ino98}, where the parallel surface at distance $1/(2H)$ of a timelike surface of constant mean curvature $H$ in $\mathbb R^{2,1}$ has constant Gaussian curvature $4H^2$ (see also~\cite[\S2]{Kokubu17}), and it is the Euclidean case $\bar K=0$ of~\cite[Chap.~I, Cor.~3.4 and Example~3.5]{Tenenblat98}. Moreover, $\hat F$ being real analytic in $\lambda$,
\[
\Psi_c\circ f^\mu=\frac2{\mu-1}\Big(\hat F|_\mu\hat F|_1^{-1}-\hat F|_1\hat F|_1^{-1}\Big)\longrightarrow 2\,\frac{\partial\hat F}{\partial\lambda}\Big|_{\lambda=1}\hat F|_{\lambda=1}^{-1}\in\sltwo ,
\]
which is the first term of the Sym formula by which timelike constant mean curvature surfaces in $\mathbb R^{2,1}$ are recovered from their extended frames~\cite[\S2]{DIT02},~\cite[Lem.~2.3]{BS13}. The differentiation in $\lambda$ of the flat theory is thus what remains of the two-point evaluation when the ambient group is flattened to its Lie algebra and the two evaluation points merge; the same phenomenon relates the constructions in $S^3$ and in $\mathbb E^3$~\cite[\S4.1]{BIK14}. Finally, the flat relation $K=4H^2$ is deformed in $\Hyp$: with $H=\coth2\theta$ and $K=1/\sinh^2\theta$, and $\theta>0$,
\begin{equation}
\label{eq:K-vs-H}
K=4\cosh^2\!\theta\,\big(H^2-1\big)=2\big(H^2-1\big)+2H\sqrt{H^2-1}\,,
\end{equation}
which is asymptotic to $4H^2$ as $H\to\infty$, that is, as $\mu\to1$.
\end{remark}

\section{Discussion and open questions}
\label{sec:conclusion}

The two constructions developed here start from the same object --- a harmonic map into the de Sitter $2$-sphere --- and differ in which conformal structure on the domain is used to define harmonicity. Lemma~\ref{lem:cmc-twisted} shows that, at the level of frames, the difference is one of normalization. A constant mean curvature extended frame, reparametrized by $\lambda=\zeta^2$ and gauged by a constant diagonal matrix, is an extended frame of its Gauss map in the sense of \S\ref{sec:cgc-cauchy}. The constant mean curvature surfaces and the constant Gaussian curvature surfaces of both branches are then read off one twisted frame by two-point evaluations, and the geometric Cauchy problem is solved on both sides by the same Cauchy theorem for harmonic maps. We close with questions this leaves open.

\begin{problem}[closed formulas for $|H|<1$]
\label{q:branch}
For $|H|>1$, the constant mean curvature surfaces are evaluations of the twisted frame $G$ at the real value $\zeta=e^{2\theta}$ (Corollary~\ref{cor:cauchy-cmc}). For $|H|<1$ the corresponding spectral parameter $\mu=(H+1)/(H-1)$ is negative, $\zeta=\sqrt\mu$ is imaginary, and the frame at $\lambda=\mu$ is obtained above by a separate integration. Is there a closed formula, in terms of evaluations of $G$ at real values of $\zeta$ and constant insertions, for the constant mean curvature surfaces with $|H|<1$ of a given Gauss map? For the constant Gaussian curvature branch $-1<K<0$ the analogous question has an affirmative answer, by the insertion of $e_3$ in Theorem~\ref{thm:bridge}.
\end{problem}

\begin{problem}[the degenerate locus and the singularities]
\label{q:degenerate}
Proposition~\ref{prop:degenerate-locus} identifies the degenerate locus of Theorem~\ref{thm:bridge} with the locus where the Gauss map fails to be an immersion. It does not say how large that locus can be: by~\eqref{eq:K-cmc} the question is how the solutions of the Gauss equation distribute the zeros of $\omega_{xy}$, and we have not investigated it. The behaviour of the constructions there, and at the points where the Birkhoff decomposition used in Theorem~\ref{thm:cauchy} leaves the big cell, is the setting for the singularities of these surfaces, which are deliberately left for future work; for the flat ambient space this is the subject of~\cite{BS14}.
\end{problem}

\section*{Declarations}

\noindent\textbf{Funding.} Part of this work originates in the author's doctoral research, supported by the Independent Research Fund Denmark (DFF), grant number 9040-00196B.

\noindent\textbf{Declaration of competing interest.} The author declares that he has no known competing financial interests or personal relationships that could have appeared to influence the work reported in this paper.

\noindent\textbf{Data availability.} No data were used for the research described in the article.

\noindent\textbf{Declaration of generative AI and AI-assisted technologies in the writing process.} During the preparation of this work the author used Claude (Anthropic) to assist with the literature search, with the symbolic and numerical verification of computations, and with the editing of the text. After using this tool, the author reviewed and edited the content as needed and takes full responsibility for the content of the publication.

\end{document}